\documentclass[10pt]{amsart}
\usepackage{amsmath,amssymb,latexsym,cancel,rotating}
\usepackage{mathrsfs}
\usepackage{enumitem}
\usepackage{needspace}
\usepackage[hidelinks]{hyperref}
\usepackage[expansion=false]{microtype}
\usepackage{graphicx,mathrsfs,color,fancyhdr,amsthm}
\usepackage{tikz}
\usepackage{tikz-cd}
\usepackage{geometry}
\usepackage{hyperref}
\usepackage{cite}
\usepackage{setspace}
\newtheorem{theorem}{Theorem}[section]
\newtheorem{lemma}[theorem]{Lemma}

\newtheorem{definition}[theorem]{Definition}
\newtheorem{proposition}[theorem]{Proposition}

\newtheorem{remark}[theorem]{Remark}

 \def\cD{{\mathcal D}}    \def\cL{{\mathcal L}}
\def\cH{{\mathcal H}}

\def\cK{{\mathcal K}}
\def\bbN{{\mathbb N}}  \def\bbZ{{\mathbb Z}}  \def\bbQ{{\mathbb Q}}
    \def\bbF{{\mathbb F}}
\def\bbC{{\mathbb C}}

        \def\bfU{{\bf U}}
   
  \def\leq{\leqslant}  \def\geq{\geqslant}

\def\Hom{\mbox{\rm Hom}}  
       \def\im{\mbox{\rm Im}\,}
\def\Ext{\mbox{\rm Ext}}   
\def\dim{\mbox{\rm dim}}   \def\End{\mbox{\rm End}}

\def\Aut{\mbox{\rm Aut}}

  \def\tr{\mbox{\rm tr}}

\def\bfV{{\mathbf V}}

\def\bfH{{\mathbf H}}

\def\bfV{\mathbf{V}}
\def\bfW{\mathbf{W}}
\def\bfE{\mathbf{E}}
\def\bfG{\mathbf{G}}

\def\bfU{\mathbf{U}}

\def\Ind{\mathbf{Ind}}

\DeclareMathOperator{\GL}{GL}
\DeclareMathOperator{\rep}{rep}
\DeclareMathOperator{\ind}{ind}
\DeclareMathOperator{\nilrep}{nilrep}

\newcommand{\N}{\mathbb N}
\newcommand{\Z}{\mathbb Z}
\newcommand{\ddim}{\mathop{\underline{\mathrm{dim}}}\nolimits}

\newcommand{\cT}{\mathcal T}
\newcommand{\cG}{\mathcal G}
\newcommand{\bfc}{\mathbf c}
\newcommand{\bfd}{\mathbf d}
\newcommand\bbq{{\mathbb Q}}

\newcommand\mf{{\mathcal{F}}}

\newcommand\mo{{\mathcal{O}}}
\newcommand\mP{{\mathcal{P}}}

\newcommand\ml{{\mathcal{L}}}

\begin{document}

\title[On Lusztig sheaves for tame quivers]{On Lusztig Sheaves for Tame Quivers}

\author{Yumeng Wu, Jie Xiao}
\address{Beijing International Center for Mathematical Research, Beijing 100871, P. R. China}
\email{2506397175@pku.edu.cn (Y. Wu)}
\address{School of Mathematical Sciences, Beijing Normal University, Beijing 100875, P. R. China}
\email{jxiao@bnu.edu.cn (J. Xiao)}



\subjclass[2000]{16G20, 17B37}

\keywords{tame quivers, Lusztig sheaf complexes, pointwise purity, canonical basis, PBW basis}

\bibliographystyle{abbrv}
\begin{abstract}
Let $Q$ be a connected quiver of tame type, and let $U_v^-$ be the positive
part of the corresponding quantized affine algebra. In Lusztig's construction
of the canonical basis of $U_v^-$, the simple perverse sheaves occurring,
up to shift, as direct summands of the flag sheaf complexes are given by
intersection cohomology complexes
$\operatorname{IC}(X(c,m),\mathcal{L}_{\chi})$.

We study the restrictions of these intersection cohomology complexes to the
locally closed subsets $Y(c,m)$ obtained by fixing the preprojective,
non-homogeneous regular, and preinjective summands in the category
$\operatorname{rep} Q$. We prove pointwise purity on the loci where the
homogeneous regular part is a direct sum of pairwise non-isomorphic
quasi-simple modules, and establish a semisimple decomposition of the
restrictions to the corresponding larger strata.

The graded multiplicities of the local systems associated with irreducible
characters of symmetric groups determine the transition coefficients from
the canonical basis to the PBW basis constructed via quiver representations.
This gives a geometric explanation of the positivity of these coefficients
and the unitriangularity of the transition matrix.
\end{abstract}
\maketitle
\section*{Introduction}

Ringel~\cite{Ringel1990x} realized the negative part $U_v^-$ of a quantum group using Hall algebras of Dynkin quivers, and Green~\cite{Green1995} extended this realization to arbitrary finite acyclic quivers by identifying $U_v^-$ with the corresponding generic twisted composition algebra. For quivers of Dynkin type, the orbits in the representation varieties correspond to elements of a Poincar\'e--Birkhoff--Witt (PBW) basis. Lusztig~\cite{lusztig1990canonical,Lusztig1991} constructed the canonical basis by algebraic and geometric methods. In the geometric construction, its elements are represented by simple perverse sheaves on the representation varieties.

For a connected quiver $Q$ of tame type, Lusztig~\cite{Lusztig1992} used the representation theory of $Q$ to describe the simple perverse sheaves that realize the canonical basis of the corresponding quantum affine algebra. These sheaves have the form $\operatorname{IC}(X(c,m),\mathcal L_\chi)$, with support $\overline{X(c,m)}$, where the local systems $\mathcal L_\chi$ are associated with irreducible characters of symmetric groups. Further details, including the description for arbitrary orientations, can be found in~\cite{li2007notesaffinecanonicalmonomial}.

For quantum affine algebras, canonical bases can also be described through algebraic constructions of PBW bases; see~\cite{BCP1999,BN2004,MT2018,SZ2026}. Constructions using representations of tame quivers were developed in~\cite{linxiao,XIAO2023510}, with the cyclic-quiver results in~\cite{DengDuXiao2007} as an ingredient. In particular, Xiao, Xu, and Zhao~\cite{XIAO2023510} proved that the bar-invariant basis obtained from their PBW basis coincides with Lusztig's canonical basis and identified the corresponding indices.

Lusztig's work\cite{lusztig1990canonical} and Kato's work~\cite{Kato2014} show how a categorical description of PBW bases can explain the positivity and unitriangularity of the transition matrix from the canonical basis to a PBW basis. We seek such a description using constructible complexes on quiver representation varieties and their relation to Lusztig sheaves. In this paper, we establish this relation for connected quivers of tame type by studying restrictions of Lusztig sheaves and their Frobenius weights.

We describe the results more precisely. Let $\bfE_\nu$ be the representation variety of $Q$ over $\overline{\mathbb F}_q$, and let $\delta$ be the minimal positive imaginary root. Write $c=c_P\oplus c_{\mathrm{nh}}\oplus c_I$ for a representation with no homogeneous regular summand. For $\dim c+m\delta=\nu$, let $Y(c,m)$ consist of the representations $c\oplus R$ with $R$ homogeneous regular of dimension $m\delta$. These subsets form a finite stratification of $\bfE_\nu$. Inside $Y(c,m)$, let $X(c,m)$ be the open dense locus where $R$ is a sum of $m$ pairwise non-isomorphic homogeneous regular simples. Ordering these simples defines a symmetric-group covering and local systems $\mathcal L_\chi$, indexed by irreducible characters of the symmetric group $S_m$.

Our first result controls the constituents of restricted flag sheaves. The restriction of any flag sheaf complex to $Y(c,m)$ belongs to the triangulated category generated by the restrictions of $\operatorname{IC}(X(c,m),\mathcal L_\chi)$, together with constant rank-one Weil factors (Theorem~\ref{local}). Our main purity theorem states that every normalized flag sheaf complex is pointwise pure of weight zero on $X(c,m)$ (Theorem~\ref{pure}). Combined with the description of constituents, this implies that its restriction to $Y(c,m)$ is pure and geometrically semisimple, with summands given by shifts and twists of these restricted intersection cohomology complexes (Proposition~\ref{prop4.2}).

These results determine the transition coefficients geometrically. For a canonical-basis complex $\operatorname{IC}_\beta$ and $j^{c,m}\colon Y(c,m)\hookrightarrow E_\nu$, write its geometric restriction as
\[
 (j^{c,m})^*\operatorname{IC}_\beta
 \simeq
 \bigoplus_{\chi,r}
 \left((j^{c,m})^*\operatorname{IC}(X(c,m),\mathcal L_\chi)
 [r](r/2)\right)^{\oplus b_{c,\chi,r}^{\beta}}.
\]
The nonnegative integers $b_{c,\chi,r}^{\beta}$ are equivalently graded local-system multiplicities on $X(c,m)$. Theorem~\ref{the4.7} identifies
\[
 f_{c,\chi}^{\beta}(v)
   =\sum_r b_{c,\chi,r}^{\beta}v^{-r}
   \in\mathbb N[v,v^{-1}]
\]
with the coefficients in the expansion of the canonical trace function in the stratum-supported family $N(c,t_\chi)$, under the convention $v=-\sqrt q$. Using the relation between this family and the PBW basis $E(c,t_\chi)$ from~\cite[Lemma~9.13]{XIAO2023510}, the coefficients with aperiodic indices are precisely the canonical-to-PBW transition coefficients. Thus the restriction formula gives a geometric explanation of positivity and unitriangularity, with respect to the support order, for the representation-theoretic PBW basis.

In his lecture notes, Schiffmann~\cite[\S3.7]{OSNotes} suggests that Lusztig's result on stalkwise Frobenius eigenvalues in finite type should extend to tame quivers. Our Theorem~\ref{pure} gives a partial answer concerning their absolute values: for a normalized flag sheaf complex, every Frobenius eigenvalue $\lambda$ on its degree-$i$ stalk cohomology at an $\mathbb F_q$-rational point of $X(c,m)$ satisfies $|\lambda|=q^{i/2}$. Thus we establish pointwise purity on these loci; we do not assert that the eigenvalues themselves are all equal to $q^{i/2}$.

We note that, independently of the present work Xu and
Zhao~\cite{XXZ2026Frobenius} have established pointwise purity
for Lusztig sheaves of tame quivers using a different approach.

Analogous positivity and unitriangularity results for the PBW bases constructed using Khovanov--Lauda--Rouquier (KLR) algebras were obtained in finite ADE type~\cite{Kato2014,10.1215/00127094-2681278} and symmetric affine type~\cite{McNamara2017}.

The main remaining step is to prove purity in the presence of non-homogeneous regular tubes. The proof uses contracting transverse slices and induction over a finite stratification in regular tubes. The required tube filtrations mix prescribed semisimple regular quotients with unrestricted quotients of dimension $\delta$; their pointwise purity is established in Lemma~\ref{lem:tube-filtration-affine-paving}. The restriction and slice arguments then give the purity theorem for general tame quivers, including the periodic tube representations occurring in the intermediate strata (Theorem~\ref{pure}). Related results on purity and affine pavings of flag varieties can be found in~\cite{DCLP1988,Sauter2017,ES2022}.

Our results extend the geometric comparison for the Kronecker quiver in~\cite{McGerty2005,WX2026} to all connected quivers of tame type.

In Section~1, we recall the representation theory and the strata $X(c,m)$ and $Y(c,m)$. In Section~2, we review Lusztig sheaves and the relevant bases. In Section~3, we prove the restriction theorem. In Section~4, we derive the decomposition and transition formulas from pointwise purity, whose proof occupies Section~5.

Throughout this paper, we follow the notation and conventions of \cite{SS,lusztig2010introduction,Pramod-2021} for quiver representation theory, quantum groups, and algebraic geometry.
\section{Preliminaries on representations of tame quivers}\label{sec1}
This section reviews basic properties of representations of tame quivers and the stratification of their representation varieties. Throughout, let $k=\overline{\bbF_q}$ be an algebraic closure of a finite field.
\subsection{Conventions and the Euler form}
All representations are finite-dimensional over $k$.
We identify $\rep_k Q$ with the category of finite-dimensional left
$kQ$-modules, and write $\ind\mathcal C$ for the isomorphism classes of
indecomposable objects of a category $\mathcal C$.

\begin{definition}
In this paper, a \emph{tame quiver} is a finite connected acyclic quiver
whose underlying unoriented graph is an extended Dynkin diagram
\[
 \widetilde A_n\ (n\geq 1),\qquad
 \widetilde D_n\ (n\geq 4),\qquad
 \widetilde E_6,\quad \widetilde E_7,\quad \widetilde E_8.
\]
Here $\widetilde A_1$ denotes the Kronecker quiver, with two vertices
and two parallel arrows. Thus \emph{tame} is used in the
representation-infinite sense; Dynkin quivers are excluded.
This is the Euclidean case in the classification of hereditary algebras;
see \cite[Chapter~XIII]{SS}.
\end{definition}

Write $Q=(Q_0,H,s,t)$, where $Q_0$ is the vertex set, $H$ is the arrow set,
and $s,t:H\to Q_0$ are the source and target maps.
For a representation $M=((M_i)_{i\in Q_0},(M_h)_{h\in H})$, its
\emph{dimension vector} is
\[
 \ddim M=(\dim_k M_i)_{i\in Q_0}\in\N^{Q_0}.
\]
For an $Q_0$-graded vector space $V=\bigoplus_{i\in Q_0}V_i$, we similarly
write $\ddim V=(\dim_k V_i)_{i\in Q_0}$.
For $\alpha,\beta\in\Z^{Q_0}$, set
\[
 \langle\alpha,\beta\rangle
 =\sum_{i\in Q_0}\alpha_i\beta_i
  -\sum_{h\in H}\alpha_{s(h)}\beta_{t(h)}.
\]
Since $kQ$ is hereditary, $\Ext^j_{kQ}(M,N)=0$ for
$M,N\in\rep_k Q$ and $j\geq2$. Moreover,
\begin{equation}
 \langle\ddim M,\ddim N\rangle
 =\dim_k\Hom_{kQ}(M,N)-\dim_k\Ext^1_{kQ}(M,N).
 \label{eq:euler}
\end{equation}
See \cite[\S1.2]{linxiao}.
Let $\delta$ be the primitive positive imaginary root.
We use the defect convention
$\partial(M)=\langle\delta,\ddim M\rangle$.
Unless indicated otherwise, $\Hom$, $\Ext^1$, and $\End$ are taken over $kQ$.

\subsection{Representation spaces and base change}

Fix an $Q_0$-graded vector space $\bfV=\bigoplus_{i\in Q_0}V_i$ with
$\ddim V=\nu$. Define the affine representation space and the base-change
group by
\begin{equation}
 \bfE_\nu=\bfE_{\bfV}=\bigoplus_{h\in H}\Hom_k(V_{s(h)},V_{t(h)}),
 \qquad
 \bfG_\nu=\prod_{i\in Q_0}\mathbf{G}l(V_i).
 \label{eq:representation-space}
\end{equation}
For $g=(g_i)_{i\in Q_0}\in \bfG_\nu$ and $x=(x_h)_{h\in H}\in \bfE_\nu$, the action is
\begin{equation}
 (g\cdot x)_h=g_{t(h)}x_hg_{s(h)}^{-1}
 \qquad(h\in H).
 \label{eq:base-change}
\end{equation}
The point $x$ determines the representation
$M_x=((V_i)_{i\in Q_0},(x_h)_{h\in H})$.
Two points lie in the same $\bfG_V$-orbit if and only if their representations
are isomorphic. The stabilizer of $x$ is $\Aut_{kQ}(M_x)$, and hence
\begin{equation}
 \dim(\bfG_V\cdot x)=\dim \bfG_V-\dim_k\End(M_x),
 \qquad
 \dim \bfG_V-\dim \bfE_\nu=\langle\nu,\nu\rangle.
 \label{eq:orbit-dimension}
\end{equation}
See \cite[\S2.1]{linxiao} and \cite[\S7.3]{XIAO2023510}.
All closures and density statements below refer to the Zariski topology.

\subsection{Indecomposables and tubes}

Let $\tau$ be the Auslander--Reiten translation.
An indecomposable representation $M$ is \emph{preprojective} if
$M\simeq\tau^{-r}P(i)$, and \emph{preinjective} if
$M\simeq\tau^r I(i)$, for some $r\geq0$ and $i\in Q_0$;
here $P(i)$ and $I(i)$ are the indecomposable projective and injective modules
associated with $i$. The remaining indecomposables are \emph{regular}.
Let $\mathbf{P},\mathbf{R},\mathbf{I}$ denote their additive subcategories.
For an indecomposable representation $M$, this classification can also be expressed as
\begin{equation}
 M\in\mathbf{P}\ \Longleftrightarrow\ \partial(M)<0,\qquad
 M\in\mathbf{R}\ \Longleftrightarrow\ \partial(M)=0,\qquad
 M\in\mathbf{I}\ \Longleftrightarrow\ \partial(M)>0.
 \label{eq:defect}
\end{equation}
These assertions are recalled in \cite[\S5.1, Lemma~5.1]{XIAO2023510}.

The regular category decomposes into tubes
$\cT_z$, indexed by $z\in\mathbb P^1(k)$ after a choice of parametrization.
Each $\cT_z$ is an extension-closed abelian subcategory and admits an exact
equivalence
\[
 \cT_z\simeq\nilrep_k C_{r_z},
\]
where $C_r$ is the cyclic quiver with vertices $\Z/r\Z$ and arrows
$i\longrightarrow i+1$. The integer $r_z$ is the \emph{rank} of the tube.
The tubes of rank one are \emph{homogeneous}; the others are
\emph{non-homogeneous}. There are finitely many non-homogeneous tubes, at parameters
$z_1,\ldots,z_e$; put $\cT_j=\cT_{z_j}$ and $r_j=r_{z_j}>1$.
Thus $e$ denotes the number of non-homogeneous tubes.
We write
\[
 Z_{  h}=\{z\in\mathbb P^1(k):r_z=1\},\qquad
 \mathbf{R}_{nh}=\bigoplus_{j=1}^{e}\cT_j,\qquad
 \mathbf{R}_{  h}=\bigoplus_{z\in Z_{  h}}\cT_z.
\]
Only finite direct sums of objects are meant here.
The ranks are as follows; entries equal to one are omitted:
\[
\begin{array}{c|c}
 \text{type}&\text{non-homogeneous tube ranks}\\ \hline
 \widetilde A_{a+b-1}&a,b\\
 \widetilde D_n&2,2,n-2\\
 \widetilde E_6&2,3,3\\
 \widetilde E_7&2,3,4\\
 \widetilde E_8&2,3,5
\end{array}
\]
In the first row, $a,b>0$ count the arrows in the two directions around
the underlying cycle; the Kronecker case has $a=b=1$.
For the tube classification and ranks, see \cite{DlabRingel1976} and \cite[Chapter~XIII]{SS};
for the cyclic-quiver equivalences, see \cite[\S5.3]{XIAO2023510}.

Fix such an equivalence for each tube.
Denote the images of the vertex simples by $S_{z,i}$.
Let $S_{z,i}[l]$ be the indecomposable of regular length $l\geq1$
and regular top $S_{z,i}$, with $i\in\Z/r_z\Z$.
These are all the indecomposable objects of $\cT_z$.
For $z\in Z_{  h}$ write $R_z[l]=S_{z,0}[l]$; then
$\ddim R_z[l]=l\delta$.
Consequently, every representation has a decomposition, unique up to
isomorphism of its summands,
\begin{equation}
 M\simeq M_{P}\oplus M_{nh}\oplus M_{hr}\oplus M_{I}.
 \label{eq:four-parts}
\end{equation}
Here, $M_{P}$, $M_{nh}$, $M_{hr}$, and $M_{I}$ denote the preprojective, nonhomogeneous regular, homogeneous regular, and preinjective direct summands of $M$, respectively.
\subsection{Hom and Ext}

For $P\in\mathbf{P}$, $R\in\mathbf{R}$, and $I\in\mathbf{I}$,
\begin{align}
 \Hom(R,P)&=\Hom(I,P)=\Hom(I,R)=0, \label{eq:hom-separation}\\
 \Ext^1(P,R)&=\Ext^1(P,I)=\Ext^1(R,I)=0. \label{eq:ext-separation}
\end{align}
See the separation theory in \cite[Chapters~XI--XIII]{SS}.
If $X\in\cT_z$ and $Y\in\cT_w$ with $z\ne w$, then
\begin{equation}
 \Hom(X,Y)=\Hom(Y,X)=\Ext^1(X,Y)=\Ext^1(Y,X)=0.
 \label{eq:tube-orthogonality}
\end{equation}
Indeed, the tubes have disjoint sets of regular composition factors,
and $\tau$ preserves each tube. Apply the hereditary
Auslander--Reiten formula
\begin{equation}
 D\Ext^1(M,N)\simeq\Hom(N,\tau M)
                 \simeq\Hom(\tau^{-1}N,M),
 \qquad D=\Hom_k(-,k),
 \label{eq:ar}
\end{equation}
with $\tau P=0$ for projective $P$ and $\tau^{-1}I=0$ for injective $I$;
see \cite[Chapter~IV]{ASS}.
Every preprojective or preinjective indecomposable $X$ satisfies
$\End(X)\simeq k$ and $\Ext^1(X,X)=0$.

\subsection{An order compatible with the vanishing properties}

Put $n=|Q_0|$ and number the vertices $1,\ldots,n$ so that every arrow
goes from a vertex with a smaller label to one with a larger label. We use the periodic reflection-functor enumerations
\begin{equation}
 P_{rn+j-1}=\tau^{-r}P(n+1-j),\qquad
 I_{rn+j}=\tau^r I(j)
 \quad(r\geq0,\ 1\leq j\leq n).
 \label{eq:enumerations}
\end{equation}
Thus the standard orders on the two components are
\[
 P_0<P_1<P_2<\cdots,\qquad
 \cdots<I_3<I_2<I_1.
\]
They are the orders associated with the periodic admissible sequence
in \cite[\S5.2]{XIAO2023510}. Their Hom and Ext properties are also recorded in
\cite[\S4.3, p.~844]{linxiao}.

\begin{definition}
Choose a total order $\triangleleft$ on the tube index set.
Define a strict partial order $<$ on $\ind(\rep_k Q)$ by
\begin{enumerate}[label=\textup{(\roman*)},leftmargin=*]
 \item $P_a<P_b$ if $a<b$, and $I_b<I_a$ if $a<b$;
 \item $P<R<I$ for every indecomposable
       $P\in\mathbf{P}$, $R\in\mathbf{R}$, and $I\in\mathbf{I}$, and $P<I$;
 \item $X<Y$ for $X\in\ind\cT_z$ and $Y\in\ind\cT_w$
       whenever $z\triangleleft w$.
\end{enumerate}
Distinct indecomposables in a single tube are left incomparable.
\end{definition}
For any dimension vector $\nu,$ there exist an indecomposable preprojective module $P$ and an indecomposable preinjective module $I,$ with the following property. For every $P'>P,$ where $P'$ is indecomposable preprojective, and every $I'<I,$ where $I'$ is indecomposable preinjective, we have $\ddim P'>\nu$ and $\ddim I'>\nu.$
\begin{proposition}\label{homext}
For indecomposables $M,N$, this order satisfies
\begin{equation}
 M<N\quad\Longrightarrow\quad
 \Hom(N,M)=0=\Ext^1(M,N).
 \label{eq:ordered-vanishing}
\end{equation}
\end{proposition}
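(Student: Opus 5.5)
We verify \eqref{eq:ordered-vanishing} by splitting according to which clause of the definition of $<$ produces $M<N$. The cases are: both preprojective, both preinjective, $M,N$ in different parts among $\mathbf P,\mathbf R,\mathbf I$, or $M,N$ regular in different tubes. Distinct indecomposables in a single tube are incomparable, so no case inside one tube is needed.

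\emph{Mixed classes.} Suppose $M\in\mathbf P$ and $N\in\mathbf R\cup\mathbf I$, or $M\in\mathbf R$ and $N\in\mathbf I$. Then $\Hom(N,M)=0$ is exactly \eqref{eq:hom-separation}, and $\Ext^1(M,N)=0$ is exactly \eqref{eq:ext-separation}.

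\emph{Different tubes.} If $M\in\cT_z$ and $N\in\cT_w$ with $z\triangleleft w$, then $z\neq w$. Both vanishings then follow from \eqref{eq:tube-orthogonality}.

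\emph{Preprojective component.} This is the main step. Let $M=P_a$ and $N=P_b$ with $a<b$. By the Auslander--Reiten formula \eqref{eq:ar}, $D\Ext^1(P_a,P_b)\simeq\Hom(P_b,\tau P_a)$. If $P_a$ is projective this is zero. Otherwise $\tau P_a=P_{a-n}$, since $\tau$ shifts the index in \eqref{eq:enumerations} by $n$. Hence both required statements reduce to the single claim
\[
 \Hom(P_b,P_c)=0\quad\text{whenever } c<b .
\]
I would prove this claim by induction on $b$, using the reflection functors attached to the admissible sequence. The vertices are numbered so that arrows increase labels. Consequently $P_0,\dots,P_{n-1}$ are $P(n),\dots,P(1)$, and $\Hom(P(i),P(j))=e_iKQe_j$ vanishes unless there is a path $j\to i$; this requires $j\le i$, i.e.\ $P(j)$ appears later in the list. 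That settles the base case.

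For the inductive step, $\tau^{-1}$ is an equivalence on non-injective preprojectives that preserves Hom spaces between them. This gives $\Hom(\tau^{-1}X,\tau^{-1}Y)\simeq\Hom(X,Y)$ for preprojective, non-injective $X,Y$, and so reduces a pair $(b,c)$ with $c\ge n$ to $(b-n,c-n)$. The remaining pairs with $c<n\le b$ say that $\Hom(\tau^{-r}P(i),P(j))=0$ for $r\ge1$. This holds because any nonzero map into a projective comes from a projective summand of the source, since $kQ$ is hereditary, and $\tau^{-r}P(i)$ is non-projective.

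The delicate part of this step is matching the index shift of \eqref{eq:enumerations} with the ordering of $P(n+1-j)$. I expect to cite \cite[\S5.2]{XIAO2023510} and \cite[\S4.3]{linxiao}, where exactly this vanishing for the periodic admissible sequence is recorded, rather than redo the reflection-functor bookkeeping.

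\emph{Preinjective component.} This case is dual. Let $M=I_b$ and $N=I_a$ with $a<b$. Use $D\Ext^1(I_b,I_a)\simeq\Hom(\tau^{-1}I_a,I_b)$, where $\tau^{-1}I_a=I_{a-n}$ or $0$. Then argue by the same induction applied to the opposite quiver, or equivalently apply the duality $D$, which exchanges $P(i)$ with $I(i)$ for $Q^{op}$ and reverses the order.
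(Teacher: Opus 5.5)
Your proof is correct and follows the same case split as the paper: mixed classes and distinct tubes come from the separation and tube-orthogonality relations, and the paper simply cites the reflection-functor orders of \cite[\S4.3]{linxiao} for two preprojectives or two preinjectives. Your AR-formula reduction together with the $\tau^{-1}$-shift induction already settles the preprojective case by itself, since after shifting only the base case $b,c<n$ and the case $c<n\le b$ remain and you treat both. So the citation you planned to fall back on is unnecessary, and duality then gives the preinjective case.
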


\begin{proof}
For two preprojectives or two preinjectives, use the reflection-functor
orders in \eqref{eq:enumerations} and \cite[\S4.3]{linxiao}.
The remaining comparisons follow from
\eqref{eq:hom-separation}--\eqref{eq:tube-orthogonality}.
Transitivity follows from the displayed component orders and the order
$\triangleleft$ on the tubes.
\end{proof}
\begin{lemma}\label{regular}
    For a regular representation $M,$ if $N$ is a sub-representation of $M$ and $R$ is a quotient representation of $M,$ with $\partial(N)=0$ and $\partial(R)=0,$ then $N$ and $R$ are regular modules.
\end{lemma}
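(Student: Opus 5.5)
The plan is to decompose $N$ and $R$ into indecomposables, sorted into preprojective, regular and preinjective summands, and then use the Hom-vanishing \eqref{eq:hom-separation} together with additivity of the defect $\partial$ to rule out the non-regular summands.

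\emph{Submodule.} Write $N=N_P\oplus N_R\oplus N_I$ with $N_P\in\mathbf{P}$, $N_R\in\mathbf{R}$ and $N_I\in\mathbf{I}$. Then $N_I$ is a direct summand of $N$, so the composite $N_I\hookrightarrow N\hookrightarrow M$ is injective. But $M$ is regular and $\Hom(I,R)=0$ by \eqref{eq:hom-separation}, so this map is zero, which forces $N_I=0$. The defect is linear in the dimension vector, so $0=\partial(N)=\partial(N_P)+\partial(N_R)=\partial(N_P)$. By \eqref{eq:defect}, every nonzero indecomposable preprojective has strictly negative defect, so $\partial(N_P)=0$ forces $N_P=0$. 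Hence $N=N_R$ is regular.

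\emph{Quotient.} Dually, write $R=R_P\oplus R_R\oplus R_I$. The composite $M\twoheadrightarrow R\twoheadrightarrow R_P$ is surjective. It lies in $\Hom(M,R_P)$ with $M$ regular and $R_P$ preprojective, and this space is zero by \eqref{eq:hom-separation}. Hence $R_P=0$. Then $0=\partial(R)=\partial(R_I)$, and since each nonzero preinjective indecomposable has positive defect, $R_I=0$. Hence $R$ is regular.

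There is no real obstacle here. The only points to check are the additivity of $\partial$ in dimension vectors, which holds because $\partial=\langle\delta,-\rangle$ is linear, and the correct direction of the Hom-vanishing in \eqref{eq:hom-separation}. The argument also yields a slightly stronger fact: any submodule of a regular module has no preinjective summand, and any quotient of a regular module has no preprojective summand.
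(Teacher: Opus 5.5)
Your proof is correct and follows the paper's argument: Hom-vanishing excludes preinjective summands of $N$ and preprojective summands of $R$, and then $\partial=0$ together with the sign of the defect on the remaining non-regular indecomposables forces regularity. The only difference is that the paper cites Proposition~\ref{homext} where you cite \eqref{eq:hom-separation} directly, which is the same fact, and you also write out the defect-sign step that the paper leaves implicit.
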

\begin{proof}
    Since $N$ is a sub-representation of a regualr representation, applying Proposition \ref{homext}, $N$ has no preinjective summands. Similarly, $R$ has no direct preprojective summand.

    Since $\partial(N)=\partial(R)=0,$ they are both regular. 
\end{proof}
\subsection{Multisegments and indexing data}

For $r\geq1$, let $\Pi(r)$ consist of the finite multisegments
\[
 \pi=\sum_{i\in\Z/r\Z,\ l\geq1}m_{i,l}[i;l),
 \qquad m_{i,l}\in\N,
\]
where only finitely many coefficients are nonzero.
For a non-homogeneous tube, abbreviate
$S_{j,i}[l]=S_{z_j,i}[l]$. Under the chosen equivalence for $\cT_j$, set
\[
 M_j(\pi)=\bigoplus_{i\in\Z/r_j\Z,\ l\geq1}
                S_{j,i}[l]^{\oplus m_{i,l}}.
\]
The multisegment is \emph{aperiodic} precisely when
\begin{equation}
 \text{for each }l\geq1\text{ there exists }i\in\Z/r\Z
 \text{ with }m_{i,l}=0.
 \label{eq:aperiodic}
\end{equation}
This is the convention of \cite[\S\S3.1--3.2]{linxiao}.

Let $\N^{(J)}$ denote the finitely supported functions $J\to\N$, and put
\[
 \cG_-=\N^{(\Z_{\leq0})},\qquad
 \cG_+=\N^{(\Z_{>0})},\qquad
 \cG_0=\prod_{j=1}^{e}\Pi(r_j),\qquad
 \cG'=\cG_-\times\cG_0\times\cG_+.
\]
An empty product is a singleton.
For $\bfc=(\bfc_-,\bfc_0,\bfc_+)$ with
$\bfc_0=(\pi_1,\ldots,\pi_e)$, define
\begin{align*}
 c_P&=M(\bfc_-)=\bigoplus_{a\geq0}P_a^{\oplus\bfc_-(-a)},&
 c_I&=M(\bfc_+)=\bigoplus_{b\geq1}I_b^{\oplus\bfc_+(b)},\\
 c_{nh}&=\bigoplus_{j=1}^{e}M_j(\pi_j),&
 c_{nh,j}&=M_j(\pi_j).
\end{align*}
We write $c=M(\bfc)$ for a representative of the isomorphism class of $c_P\oplus c_I\oplus c_{nh}.$ The set of representatives $c$, indexed by $\bfc\in \cG'$, is denoted by $\cG.$

\subsection{The subsets \texorpdfstring{$X(c,m)$ and $Y(c,m)$}{X(c,m) and Y(c,m)}}
\label{subsec:homogeneous-strata}

Fix a representation $c$ with no homogeneous regular direct summand.
In the notation of \eqref{eq:four-parts}, its decomposition is
\begin{equation}
 c\simeq c_{P}\oplus c_{nh}\oplus c_{I},
 \qquad
 c_{nh}\simeq\bigoplus_{j=1}^{e}c_{nh,j},
 \quad c_{nh,j}\in\cT_j.
 \label{eq:C-decomposition}
\end{equation}
There is a unique $\bfc\in\cG'$ such that
$c_{P}\simeq M(\bfc_-)$, $c_{I}\simeq M(\bfc_+)$, and
$c_{nh,j}\simeq M_j(\pi_j)$.

Set $\widetilde{\cG}_\nu
 =\{(c,m)|c \in \cG,\ \ddim c+m\delta=\nu\}.$

Let $m\in\N$ and choose $V$ with $\ddim V=\ddim c+m\delta=\nu$.
Following \cite[\S7.3]{XIAO2023510}, define
\begin{align}
 Y(c,m)
 &=\bigl\{x\in \bfE_\nu:\ M_x\simeq c\oplus R,
       \ R\in\mathbf{R}_{h},\ \ddim R=m\delta\bigr\},
       \label{eq:Y-stratum}\\
 X(c,m)
 &=\bigl\{x\in \bfE_\nu:\ M_x\simeq
       c\oplus R_{u_1}[1]\oplus\cdots\oplus R_{u_m}[1],\notag\\[-2pt]
 &\hspace{38mm}u_1,\ldots,u_m\in Z_{h}
       \text{ pairwise distinct}\bigr\}.
       \label{eq:X-stratum}
\end{align}
Thus $X(c,m)$ consists of the points whose homogeneous regular part is
a direct sum of $m$ pairwise non-isomorphic regular simples.
Here \emph{regular simple} means simple in its tube, not necessarily
simple as a $kQ$-module. Each such summand has dimension vector $\delta$.
Both subsets are $\bfG_\nu$-stable, and $X(c,m)\subseteq Y(c,m)$.
For $m=0$, the empty direct sum is understood, and both sets are the
orbit of $c$.

The Krull--Schmidt theorem gives the finite disjoint decomposition
\begin{equation}
 \bfE_\nu=\bigsqcup_{(c,m)\in \widetilde{\cG}_\nu}Y(c,m);
 \label{eq:geometric-partition}
\end{equation}
see \cite[\S7.3]{XIAO2023510}.

\begin{proposition}
\label{prop:density}
The subset $X(c,m)$ is a smooth variety, open and dense in $Y(c,m)$. In particular,
\begin{equation}
 \overline{X(c,m)}=\overline{Y(c,m)}.
 \label{eq:density}
\end{equation}
\end{proposition}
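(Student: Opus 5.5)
The plan is to realize $Y(c,m)$ and $X(c,m)$ as images of an associated-bundle construction and read off smoothness, openness and density from there. Put $\nu_0=\ddim c$ and choose a decomposition $\bfV=\bfV'\oplus\bfV''$ with $\ddim\bfV'=\nu_0$ and $\ddim\bfV''=m\delta$. Let $\mathcal R_m\subseteq\bfE_{m\delta}$ be the set of points whose representation is homogeneous regular. Since defect is additive and a representation of dimension $m\delta$ has defect zero, the regular representations of dimension $m\delta$ form an open subset. Indeed, having no preprojective summand means $\Hom(M,P)=0$ for the finitely many relevant indecomposable preprojectives $P$ (those singled out before Proposition~\ref{homext}), and upper semicontinuity of $\dim\Hom$ makes this an open condition; the preinjective side is symmetric. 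Removing the finitely many non-homogeneous regular summand types is likewise open: there are only finitely many regular simples $S_{j,i}$ of dimension $\leq m\delta$ in the non-homogeneous tubes, and we impose $\Hom(S_{j,i},M)=0$ for each of them. Hence $\mathcal R_m$ is open in $\bfE_{m\delta}$.

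Inside $\mathcal R_m$, let $\mathcal X_m$ be the locus where $M$ is a sum of $m$ pairwise distinct regular simples $R_u[1]$. We will show that $\mathcal X_m$ is open and dense in $\mathcal R_m$, and smooth.

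\emph{Openness.} The condition is $\dim\End(M)=m$. For any homogeneous regular $M$ of dimension $m\delta$, the number of summands is at most $m$ and the Euler form gives $\dim\End M\geq m$, with equality exactly in the desired case; upper semicontinuity then gives openness.

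\emph{Density.} Any $R=\bigoplus R_{u}[l]$ deforms to a sum of distinct simples. For this one can use that $\bfE_{m\delta}$ is irreducible, so $\mathcal R_m$ is irreducible, and it suffices that $\mathcal X_m\neq\emptyset$, which holds because $Z_h$ is infinite.

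\emph{Smoothness.} $\mathcal X_m$ is open in an affine space, hence smooth.

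Next consider the map
\[
\phi:\bfG_\nu\times^{P}\bigl(\{c\}\times \mathcal R_m\bigr)\to\bfE_\nu,
\]
or more simply the action map $\bfG_\nu\times(\mathcal O_c\times\mathcal R_m)\to\bfE_\nu$ into block-diagonal points. Its image is $Y(c,m)$ (respectively $X(c,m)$ when $\mathcal R_m$ is replaced by $\mathcal X_m$). The key input is that $\Hom$ and $\Ext^1$ between $c$ and homogeneous regular $R$ vanish in the appropriate directions. By \eqref{eq:hom-separation}, \eqref{eq:ext-separation} and \eqref{eq:tube-orthogonality}, the only nonzero groups are $\Hom(c_P,R)$, $\Hom(R,c_I)$, $\Ext^1(R,c_P)$ and $\Ext^1(c_I,R)$. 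Their dimensions depend only on defects and dimension vectors, since $\langle\ddim c_P,m\delta\rangle$ is fixed and one of each $\Hom$/$\Ext$ pair vanishes. Consequently $\dim\End(c\oplus R)-\dim\End R$ is constant on $Y(c,m)$, and the fibres of the map from $Y(c,m)$ to the parameter set of $R$-classes are homogeneous.

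Concretely, we will show that $X(c,m)$ is a single $\bfG_\nu$-saturated family whose quotient is the configuration space of $m$ distinct points of $Z_h$, and obtain smoothness of $X(c,m)$ from smoothness of the fibration $\bfG_\nu\times^{H}(\mathcal O_c\times\mathcal X_m)\to X(c,m)$. Here $H=\bfG_{\nu_0}\times\bfG_{m\delta}$, the map is smooth because it is a $\bfG_\nu$-equivariant surjection with constant fibre dimension $\dim\Hom(c,R)+\dim\Hom(R,c)$ (a constant by the remark above), and its source is smooth. Openness of $X(c,m)$ in $Y(c,m)$ follows because the complement is cut out by $\dim\End(M_x)>\dim\End(c\oplus R_{\mathrm{generic}})$, a closed condition by semicontinuity, the generic value being constant as just noted. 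Density follows because $Y(c,m)$ is the image of the irreducible variety $\bfG_\nu\times^{H}(\mathcal O_c\times\mathcal R_m)$, in which the preimage of $X(c,m)$ is nonempty and open; this gives $\overline{X(c,m)}=\overline{Y(c,m)}$.

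I expect the main obstacle to be the smoothness of $X(c,m)$ as a subvariety of $\bfE_\nu$. This requires the orbit map to be smooth onto a locally closed image, so the image must carry a reduced structure and the equivariant map must have surjective differential. I would handle this by comparing tangent spaces: the tangent space to $X(c,m)$ at $x$ should be $\mathfrak g\cdot x$ plus the $m$-dimensional deformation space $\Ext^1(R,R)$ of the homogeneous part. I would check that its dimension equals $\dim\bfG_\nu-\dim\End(M_x)+m$, which is constant along $X(c,m)$, and then invoke generic smoothness together with equivariance.
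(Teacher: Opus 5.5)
There are two genuine gaps.

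\textbf{The openness criterion is false.} For homogeneous regular $M$ with $\ddim M=m\delta$ one does have $\dim\End M\geq m$. This comes from $\dim\Hom(R_z[a],R_z[b])=\min(a,b)$; the Euler form only gives $\dim\End M=\dim\Ext^1(M,M)$. But equality holds exactly when each homogeneous tube contains at most one indecomposable summand of $M$, not when all summands are regular simple. For $m=2$, $M=R_z[2]$ has $\End M\simeq k[T]/(T^2)$, of dimension $2=m$, yet $M\notin\mathcal X_2$. So $\{\dim\End=m\}\cap\mathcal R_m$ strictly contains $\mathcal X_m$.

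Since $\dim\End(c\oplus R)-\dim\End R$ is constant on $Y(c,m)$, the same example defeats your description of $Y(c,m)\setminus X(c,m)$ as the locus $\dim\End(M_x)>\dim\End(c\oplus R_{\mathrm{generic}})$. Openness of $X(0,m)$ in $\bfE_{m\delta}$ needs a different input; the paper takes it from \cite[6.10]{Lusztig1992}. Once openness is available, your irreducibility argument for density is fine.

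\textbf{Smoothness is not established.} Your Levi-type map $\bfG_\nu\times^H(\mathcal O_c\times\mathcal X_m)\to X(c,m)$ has fibres of dimension $\dim\Hom(c,R)+\dim\Hom(R,c)$. This is positive as soon as $m\geq1$ and $c_P\oplus c_I\neq0$. An equivariant surjection with smooth source and constant fibre dimension need not be smooth. You would need flatness, and miracle flatness presupposes a regular target, which is exactly what you are trying to prove.

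Your tangent-space fallback does not close this. You have no local equations bounding the Zariski tangent space of the reduced locally closed subset $X(c,m)$ from above. Generic smoothness plus $\bfG_\nu$-equivariance only spreads smoothness along orbits, whereas $X(c,m)$ is a positive-dimensional family of orbits, parametrized by the $m$ distinct points of $Z_h$.

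\textbf{The missing idea is to use a parabolic instead of a Levi.} Filter with $c_I$ at the bottom, then $c_{nh}$, then the homogeneous part, then $c_P$ on top. By Proposition~\ref{homext} and tube orthogonality, $\Hom$ from each subquotient to every later one vanishes, and $\Ext^1$ in the opposite direction vanishes. Hence every representation admitting such a filtration splits, and the $x$-stable flag of this type is unique, so there are no positive-dimensional fibres. This identifies $X(c,m)$ with $\bfG_\nu\times^P X'$, where $X'$ is the product of $\mathcal O_{c_I}\times\mathcal O_{c_{nh}}\times X(0,m)\times\mathcal O_{c_P}$ with an affine space of off-diagonal blocks. $X'$ is smooth, and openness and density transfer from $X(0,m)\subset Y(0,m)$. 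This is the route the paper takes.
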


\begin{proof}
By \cite[6.10]{Lusztig1992}, $X(0,m)$ is open and dense in $\bfE_{m\delta}.$ \[X(c,m)=\left\{(x,F)\middle|\begin{gathered}
    x\in \bfE_{\bfV},F=(0=V_0\subset V_1\subset V_2\subset V_3\subset V_4=\bfV)\\
    x|_{V_1/V_0}\in Y(c_I,0),\, x|_{V_2/V_1}\in Y(c_{nh},0),\, x|_{V_3/V_2}\in X(0,m),\,x|_{V_4/V_3}\in Y(c_P,0)
\end{gathered}\right\}\] and \[Y(c,m)=\left\{(x,F)\middle|\begin{gathered}
    x\in \bfE_{\bfV},F=(0=V_0\subset V_1\subset V_2\subset V_3\subset V_4=\bfV)\\
    x|_{V_1/V_0}\in Y(c_I,0),\, x|_{V_2/V_1}\in Y(c_{nh},0),\, x|_{V_3/V_2}\in Y(0,m),\,x|_{V_4/V_3}\in Y(c_P,0)
\end{gathered}\right\}.\]

Thus $Y(c,m)\subset \overline{X(c,m)}.$ And $X(c,m)$ is open in $Y(c,m).$
Fix a corresponding flag $0\subset V_1\subset V_2\subset V_3\subset \bfV$ and denote the stablizer of it as $P\subset \bfG_{\nu}.$  Let \[X'=\left\{x\middle|\begin{gathered}
    x\in \bfE_{\bfV}\\
    x|_{V_1/V_0}\in Y(c_I,0),\, x|_{V_2/V_1}\in Y(c_{nh},0),\, x|_{V_3/V_2}\in X(0,m),\,x|_{V_4/V_3}\in Y(c_P,0)
\end{gathered}\right\}.\] Then it is clearly that $X'$ is smooth. Thus $X(c,m)=X'\times_P \bfG_{\nu}$ is also smooth.
\end{proof}
Throughout, we use the following notation for the inclusions:
\[
\begin{gathered}
Y(c,m)\xrightarrow{j^{c,m}}\bfE_\nu,\qquad
X(c,m)\xrightarrow{\widetilde j^{c,m}}\bfE_\nu,\qquad X(c,m)\xrightarrow{\widetilde \iota^{c,m}}Y(c,m).
\end{gathered}
\]
\subsection{The partial order of Xiao--Xu--Zhao}

We now recall \cite[Definition~7.1]{XIAO2023510}, making all auxiliary orders explicit.
For distinct functions, the two lexicographic orders are
\begin{equation}
\begin{aligned}
 \bfc_->_L\bfd_-
 &\ \Longleftrightarrow\
 \bfc_-(t)>\bfd_-(t),\
 t=\max\{u\leq0:\bfc_-(u)\ne\bfd_-(u)\},\\
 \bfc_+>_L\bfd_+
 &\ \Longleftrightarrow\
 \bfc_+(t)>\bfd_+(t),\
 t=\min\{u>0:\bfc_+(u)\ne\bfd_+(u)\}.
\end{aligned}
\label{eq:lex-orders}
\end{equation}
Thus the coordinates are read in the orders $0,-1,-2,\ldots$
and $1,2,3,\ldots$, respectively.
In particular, the latter starts at the $I_1$ end of the preinjective order.

For multisegments of the same cyclic-quiver dimension vector, use
\begin{equation}
 \pi'\leq_G\pi
 \quad\Longleftrightarrow\quad
 \dim_k\Hom(U,M(\pi'))\geq\dim_k\Hom(U,M(\pi)).
 \label{eq:multisegment-order}
\end{equation}
The inequality is required for every $U\in\nilrep_k C_r$;
$M(\pi)$ denotes the corresponding cyclic-quiver representation.
Equivalently, it is enough to test indecomposable $U$.
Multisegments of different cyclic-quiver dimension vectors are not
comparable under $\leq_G$.
This is the order used in \cite[\S4.3]{XIAO2023510}.
For $\bfc_0=(\pi_1,\ldots,\pi_e)$ and
$\bfc'_0=(\pi'_1,\ldots,\pi'_e)$, write
\[
 \bfc'_0<_G\bfc_0
 \quad\Longleftrightarrow\quad
 \pi'_j\leq_G\pi_j\ \text{for all }j,\qquad
 \bfc'_0\ne\bfc_0.
\]

\Needspace{10\baselineskip}
\begin{definition}[{\cite[Definition~7.1]{XIAO2023510}}]
\label{def:xxz-order}
Fix $\nu\in\N^{I}$. For
$(c',m'),(c,m)\in\widetilde{\cG}_\nu$, with corresponding elements $\bfc,\bfc'\in \cG',$ declare
$(c',m')\prec(c,m)$ precisely in the following cases:
\begin{enumerate}[label=\textup{(\alph*)},leftmargin=*]
 \item
 $\bfc'_-\geq_L\bfc_-$ and $\bfc'_+\geq_L\bfc_+$,
 with at least one of these inequalities strict;
 \item
 $\bfc'_-=\bfc_-$, $\bfc'_+=\bfc_+$, and $m'<m$;
 \item
 $\bfc'_-=\bfc_-$, $\bfc'_+=\bfc_+$, $m'=m$,
 and $\bfc'_0<_G\bfc_0$.
\end{enumerate}
Adjoining equality defines $\preceq$ on $\widetilde{\cG}_\nu$.
\end{definition}
\begin{proposition}\cite[Proposition 7.3]{XIAO2023510}\label{open}
    In $\bfE_{\nu},$ $$\overline{X(c,m)}\subset\bigcup_{(c',m')\preceq(c,m)}Y(c',m').$$
\end{proposition}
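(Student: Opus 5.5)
Since $Y(c,m)$ is $\bfG_\nu$-stable, so is $\overline{X(c,m)}=\overline{Y(c,m)}$ (Proposition~\ref{prop:density}). The partition \eqref{eq:geometric-partition} then shows that it suffices to prove the following: if $Y(c',m')\cap\overline{X(c,m)}\neq\emptyset$, then $(c',m')\preceq(c,m)$. So fix $x'$ with $M_{x'}\simeq c'\oplus R'$ lying in the closure of $X(c,m)$. I will use the degeneration theory of representation varieties. Points of $\overline{\bfG_\nu\cdot x}$ are degenerations $M_x\leq_{\deg}M_{x'}$. For every representation $U$, the functions $\dim\Hom(U,-)$ and $\dim\Hom(-,U)$ are upper semicontinuous. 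Hence they can only increase under specialization. Since $X(c,m)$ is a union of orbits parametrized by the $m$ distinct points $u_i\in Z_h$, I will apply semicontinuity to a generic point of $X(c,m)$. The Hom-dimensions from a fixed $U$ to $R_{u}[1]$ are independent of the generic $u$, as long as $U$ has no homogeneous summand at $u$.

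\emph{Step 1 (preprojective and preinjective parts).} Choose the boundary modules $P,I$ given by the remark before Proposition~\ref{homext}, so that only finitely many $P_a$ and $I_b$ can occur. I test with $U=P_a$ in $\dim\Hom(-,\cdot)$, or dually with $\dim\Hom(\cdot,I_b)$ and the AR formula \eqref{eq:ar}. Using the vanishing in Proposition~\ref{homext}, the multiplicity of $P_a$ in $M$ is recovered from the differences $\dim\Hom(M,\tau P_{a})-\dots$, or more cleanly by the standard argument that the first place where $\bfc'_-$ and $\bfc_-$ differ, read from $0$ downward, must satisfy $\bfc'_-(t)\geq\bfc_-(t)$. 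This is the argument of Bongartz type for directing modules. The same holds for $\bfc_+$ from the $I_1$ end. This will give $\bfc'_-\geq_L\bfc_-$ and $\bfc'_+\geq_L\bfc_+$. If either inequality is strict, we are in case (a).

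\emph{Step 2 (equal outer parts).} Suppose $\bfc'_\pm=\bfc_\pm$. Then the regular parts $M_{reg}$ and $M'_{reg}$ have equal dimension vectors. Degeneration restricted to fixed outer summands can be pushed to a degeneration of regular parts: I will use the filtration description in Proposition~\ref{prop:density} and the fact that $Y(c_P,0)$ and $Y(c_I,0)$ are single orbits. I then compare the homogeneous multiplicity. For a generic homogeneous $z$, $\dim\Hom(R_z[1],-)$ is zero on both sides. Instead I will use $m=\dim\Hom(U,-)$ for $U$ preprojective of defect $-1$ with suitable dimension vector. The number of homogeneous tubes met counts something upper semicontinuous. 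Alternatively, note that $m$ equals the maximal $k$ such that $M$ has a regular summand of dimension $k\delta$ lying in homogeneous tubes, and that the set where this $k$ is at least a given value is closed. This gives $m'\leq m$; strict inequality is case (b).

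\emph{Step 3 (equal $m$).} When $m'=m$ as well, the homogeneous parts have the same dimension. Tube orthogonality \eqref{eq:tube-orthogonality} lets me test $\dim\Hom(U,-)$ with $U\in\cT_j$, where only $c_{nh,j}$ contributes on the generic side. Upper semicontinuity then gives $\pi'_j\leq_G\pi_j$ for each $j$, by \eqref{eq:multisegment-order}, i.e. case (c) or equality. I expect Step 2 to be the main obstacle, namely rigorously justifying that $m$ cannot increase under specialization. I would first try the closedness of the locus of representations admitting a regular subrepresentation of dimension $\delta$ in a homogeneous tube. This uses that $X(0,1)$ is open in $\bfE_\delta$ (Lusztig's 6.10) and that its complement consists of non-homogeneous regular representations of dimension $\delta$ together with representations that are not regular.
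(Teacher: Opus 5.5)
The gap is in Step~2, the inequality $m'\leq m$: none of the tools you propose can give it. Your reduction and Steps~1 and~3 are essentially sound. Step~1 is cleanest via the triangular formulas $\dim\Hom(M,P_a)=\mu_{P_a}(M)+\sum_{b<a}\mu_{P_b}(M)\dim\Hom(P_b,P_a)$ and $\dim\Hom(I_b,M)=\mu_{I_b}(M)+\sum_{a<b}\mu_{I_a}(M)\dim\Hom(I_b,I_a)$. These follow from Proposition~\ref{homext} and \eqref{eq:hom-separation}, and they are constant on $X(c,m)$.

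Here is why each of your Step~2 tools fails.
\begin{enumerate}[label=\textup{(\roman*)},leftmargin=*]
 \item For preprojective $U$, the Euler form and $\Ext^1(U,R)=\Ext^1(U,I)=0$ give $\dim\Hom(U,M)=\langle\ddim U,\nu\rangle+\dim\Ext^1(U,M_P)$. Once $c'_P\simeq c_P$, this number is the same at $x$ and at $x'$. The homogeneous summands do contribute to it, but the non-homogeneous ones compensate exactly.
 \item The locus where the homogeneous multiplicity is at least $k$ is not closed. For $k=1$ in $\bfE_\delta$ it is $Y(0,1)$, which is open and dense, while its complement contains $Y(S_{j,i}[r_j],0)\neq\emptyset$ as soon as $e\geq1$. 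The same example disposes of the locus of representations admitting a homogeneous regular subrepresentation of dimension $\delta$.
 \item Closedness of such a locus would in any case give $m'\geq m$, the opposite of what you want. Upper semicontinuous invariants can only grow on $\overline{X(c,m)}$, so you need an invariant that grows when homogeneous mass turns into non-homogeneous mass.
\end{enumerate}
Separately, degenerations cannot be ``pushed to the regular parts'' pointwise. The module $S_{j,i}[r_j]$ lies in $\overline{X(0,1)}=\bfE_\delta$ but in no orbit closure $\overline{\mathcal O_{R_z[1]}}$, since both orbits have codimension one.

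Such an invariant comes from long modules in the non-homogeneous tubes. Put $\mathbf d_{j,i}=\ddim S_{j,i}$, and for $N\in\cT_j$ let $\eta_i(N)$ be the number of regular composition factors of $N$ isomorphic to $S_{j,i}$. Take $l$ larger than the regular length of every object of $\cT_j$ of dimension at most $\nu$. Then $\dim\Hom(S_{j,i}[l],N)=\eta_i(N)$, since in the cyclic model maps out of $S_i[l]$ are the degree-$i$ vectors killed by $x^l$. By \eqref{eq:hom-separation} and \eqref{eq:tube-orthogonality}, $\dim\Hom(S_{j,i}[l],M_x)=\eta_i(c_{nh,j})+\dim\Hom(S_{j,i}[l],c_I)$ is constant on $X(c,m)$. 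So at $x'$ with $c'_I\simeq c_I$ you get $\eta_i(c'_{nh,j})\geq\eta_i(c_{nh,j})$ for all $i,j$. Hence
\[
 (m-m')\delta=\ddim c'_{nh}-\ddim c_{nh}
 =\sum_{j,i}\bigl(\eta_i(c'_{nh,j})-\eta_i(c_{nh,j})\bigr)\mathbf d_{j,i}
\]
is a nonnegative combination of nonzero dimension vectors. This forces $m'\leq m$, and if $m'=m$, every difference vanishes.

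That last consequence is also missing from your Step~3. The order $\leq_G$ only compares multisegments with the same cyclic-quiver dimension vector, so you need $\ddim c'_{nh,j}=\ddim c_{nh,j}$ in each tube, not just equality of the homogeneous dimensions. With it, your test with $U\in\cT_j$ yields $\pi'_j\leq_G\pi_j$, hence case~(c) or equality. In that test, $\Hom(U,c_I)$ occurs on both sides and cancels.
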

\begin{remark}\label{ys}
    Since $\overline{X(c,m)}=\overline{Y(c,m)},$ Proposition \ref{open} shows that $Y(c,m)$ is open in $\overline{Y(c,m)}$. It also shows that the subsets $\bigcup_{(c',m')\not\preceq (c,m)} Y(c',m')$ can be used, through successive open and closed decompositions, to construct a filtration of $\bfE_{\nu}.$ Morever, combining with Proposition \ref{prop:density}, $X(c,m)$ is open in $\overline{X(c,m)}.$ And $Y(0,m)$ is dense and open in $\bfE_{m\delta},$ by the same way in Proposition \ref{prop:density}, $Y(c,m)$ is smooth as well. The dimension of $X(c,m)$ is the same as the dimension of $Y(c,m).$
\end{remark}

\section{Lusztig sheaf complexes, the canonical basis, and the PBW basis}
The variety $\bfE_\nu$ has a natural $\bbF_p$-structure with Frobenius map $\operatorname{Fr}$. Lusztig defines $\cD_{m,\bfG_{\nu}}^{b}(\bfE_\nu)$ as the derived category of mixed $\bfG_{\nu}$-equivariant sheaf complexes $(\cD_{m,r,\bfG_{\nu}}^{b}(\bfE_\nu) \text{ with Frobenius map } \operatorname{Fr}^r $
\\$\text{if } q=p^r\bigr).$ The mixed structure is taken with respect to the $\bbF_q$-structure of the variety, which gives rise to a Weil structure on $\cD_{\bfG_{\nu}}^{b}(\bfE_\nu),$ the derived category of $\bfG_{\nu}$-equivariant sheaf complexes. We now recall the induction functor defined by Lusztig:

\[
\Ind_{\nu',\nu''}^{\nu}:
\cD_{m,\bfG_{\nu'}}^{b}(\bfE_{\nu'})\times
\cD_{m,\bfG_{\nu''}}^{b}(\bfE_{\nu''})
\longrightarrow
\cD_{m,\bfG_{\nu}}^{b}(\bfE_\nu),
\]
where the dimension vectors of the $Q_0$-graded vector spaces $\bfV$, $\bfV'$, and $\bfV''$ are $\nu$, $\nu'$, and $\nu''$, respectively. Let
\[
\bfE_\nu'=
\left\{(x,\bfW,\rho_1,\rho_2)\ \middle|\ 
\begin{aligned}
&x\in \bfE_\nu,\ \bfW\cong \bfV'' \text{ as an $Q_0$-graded space},\\
&x(\bfW)\subset \bfW,\,\rho_1:\bfV/\bfW\cong \bfV',\ \rho_2:\bfW\cong \bfV''
\end{aligned}\right\},
\]
\[
\bfE_\nu''=
\left\{(x,\bfW)\ \middle|\ x\in \bfE_\nu,\ x(\bfW)\subset \bfW,\,\bfW\cong \bfV'' \text{ as an $Q_0$-graded space}\right\},
\]
with maps defined by
\[
p_1(x,\bfW,\rho_1,\rho_2)
=
\bigl(\rho_1(x|_{\bfV/\bfW})\rho_1^{-1},\ \rho_2(x|_{\bfW})\rho_2^{-1}\bigr),
\qquad
p_2(x,\bfW,\rho_1,\rho_2)=(x,\bfW),
\qquad
p_3(x,\bfW)=x.
\]
\begin{equation}\label{eq111}
 \begin{tikzcd}
\bfE_{\nu'}\times \bfE_{\nu''} & \bfE_\nu' \arrow[l, "p_1"'] \arrow[r, "p_2"] & \bfE_\nu'' \arrow[r, "p_3"] & \bfE_\nu
\end{tikzcd}
\end{equation}

We denote the dimensions of the fibers of the smooth morphisms $p_1$ and $p_2$ by $d_1$ and $d_2$, respectively. The induction functor is defined as
\[
\Ind_{\nu',\nu''}^{\nu}
:=
{p_3}_! \, {p_2}_{\flat} \, p_1^* [d_1-d_2]\left(\frac{d_1-d_2}{2}\right).
\]
Here ${p_2}_{\flat}$ is the inverse functor to $p_2^*.$
Lusztig then defines flag sheaf complexes as follows. Throughout, we write $\Ind_{\nu',\nu''}^{\nu}(\mathcal{F}\boxtimes \mathcal{G})$ as $\mathcal{F}\diamond \mathcal{G}.$

For $\nu\in \bbN Q_0$ with $\ddim \bfV=\nu$, define the set of flags $S_{\nu}$ by
\[
S_{\nu}=\{\,(\nu_1,\nu_2\cdots \nu_n) \mid\nu_j=l_ji_j,\ i_j\in Q_0,\ \sum_{j=1}^n l_ji_j=\nu\,\}.
\]

For $\underline{\nu}=(\nu_j)\in S_{\nu}$, we consider the flag variety $\widetilde{\mathcal{F}_{\underline{\nu}}}$ defined by
\[
\left\{(x,f)\ \middle|\ 
\begin{aligned}
&x\in \bfE_\nu,\ f=(0=V_n\subset V_{n-1}\subset \cdots \subset V_1\subset V_0=\bfV)\text{ is a filtration of }\bbN Q_0\text{-graded spaces},\\
&x(V_i)\subset V_i,\ \ddim (V_{i-1}/V_i)=\nu_i
\end{aligned}\right\}.
\]
As shown in \cite[Chapter 9]{lusztig2010introduction}, this is a smooth irreducible variety; denote its dimension by $f(\underline{\nu}).$

For $F=(0=V_n\subset V_{n-1}\subset \cdots \subset V_1\subset V_0=\bfV),$ we abbreviate the condition $x(V_i)\subset V_i,$ for every $i$, as $x(F)\subset F.$

There is a natural projective morphism $\pi_{\underline{\nu}}:\widetilde{\mathcal{F}_{\underline{\nu}}}\to \bfE_\nu$. The flag sheaf complex corresponding to $\underline{\nu}$ is $$\mathcal{L}_{\underline{\nu}}:={\pi_{\underline{\nu}}}_!(\overline{\bbQ_l})[f(\underline{\nu})](\frac{f(\underline{\nu})}{2}).$$ By the BBD decomposition theorem \cite{BBD}, it is semisimple. This sheaf complex carries a natural Weil structure. For a flag sheaf complex $\mathcal{L}$ equipped with its natural Weil structure, we retain the notation $\mathcal{L}.$

For any $(c,m)$ and $\underline{\lambda}=(\lambda_1,\cdots,\lambda_l),$ where $\sum_{i=1}^l\lambda_i=m,$ consider the following morphism: $$\begin{aligned}  
&\left\{
(x,F)\ \middle|\
\begin{gathered}
F=(0=V_0\subset V_1=V_{2,0}\subset V_{2,1}\subset\cdots\subset V_{2,e}=V_{3,0}\subset\cdots \subset V_{3,l}\subset V_4=\bfV),\\
 x\in X(c,m),\quad M_x|_{V_{3,i}/V_{3,i-1}}\text{ is homogeneous regular of dimension } \lambda_i\delta,\\x(F)\subset F,\,M_x|_{V_1/V_0}\simeq c_I, M_{x}|_{V_{2,i}/V_{2,i-1}}\simeq c_{nh,i}, M_x|_{V_4/V_{3,m}}\simeq c_P
\end{gathered}
\right\}\xrightarrow{p_{c,\underline{\lambda}}} X(c,m).\end{aligned}$$ It is an etale covering map with coset space $S_{m}/\Pi_{i=1}^lS_{\lambda_i}$ which is the same as in \cite[6.6]{Lusztig1992}, since this property is independent with the choice of $c.$ Thus ${p_{c,\underline{\lambda}}}_!\overline{\bbQ_l}$ is independent of the order of $(\lambda_1,\cdots,\lambda_l).$ We may therefore choose $\underline{\lambda}=(\lambda_1,\cdots,\lambda_l)$ with $\lambda_1\geq \lambda_2\geq\cdots\geq \lambda_l.$ Moreover, $${p_{c,\underline{\lambda}}}_!\overline{\bbQ_l}\simeq \oplus_{\chi\in X(S_m)} L_{\chi}^{\oplus K_{\underline{\lambda},\chi}}$$ with the corresponding Weil structure, where $K_{\underline{\lambda},\chi}$ is a Kostka number.
As in \cite{Lusztig1992}, we write
\[
IC(X(c,m),L_{\chi})={j_{\alpha}}_{!*}\bigl(L_{\chi}[\dim X(c,m)](\frac{\dim X(c,m)}{2})\bigr).
\]
\subsection{Trace map}
Let $X^{\operatorname{Fr}^r}$ and $G^{\operatorname{Fr}^r}$ denote the fixed point sets of $X$ and $G$ under their respective Frobenius maps ${\operatorname{Fr}^r}.$ We denote by $\tilde{\cH}_{G^{\operatorname{Fr}^r}}(X^{\operatorname{Fr}^r})$ the $\bbC$-vector space of $G^{\operatorname{Fr}^r}$-invariant functions on $X^{\operatorname{Fr}^r}$. For any $G$-equivariant morphism $f:X\rightarrow Y$ that respects the $\bbF_q$-structures, we denote its restriction to $X^{\operatorname{Fr}^r}$ by the same letter $f:X^{\operatorname{Fr}^r}\rightarrow Y^{\operatorname{Fr}^r}$. This gives $\bbC$-linear maps
\begin{align*}
f^*:\tilde{\cH}_{G^{\operatorname{Fr}^r}}(Y^{\operatorname{Fr}^r})&\rightarrow \tilde{\cH}_{G^{\operatorname{Fr}^r}}(X^{\operatorname{Fr}^r}) &f_!: \tilde{\cH}_{G^{\operatorname{Fr}^r}}(X^{\operatorname{Fr}^r})&\rightarrow \tilde{\cH}_{G^{\operatorname{Fr}^r}}(Y^{\operatorname{Fr}^r})\\
\varphi&\mapsto (x\mapsto \varphi(f(x))), & \psi&\mapsto(y\mapsto \sum_{x\in f^{-1}(y)}\psi(x)).
\end{align*}

For any mixed Weil complex $A\in \cD^b_{G,r,m}(X)$ with Weil structure $\xi:{\operatorname{Fr}^r}^*(A)\rightarrow A$ and any $x\in X^{\operatorname{Fr}^r}, s\in \bbZ$, there is an isomorphism $H^s(\xi)_x:H^s(A)_x\rightarrow H^s(A)_x$ of the stalk at $x$ of the $s$-th cohomology sheaf. Taking the alternating sum of their traces gives 
$$\chi(A)(x)=\sum_{s\in \bbZ}(-1)^s\tr(H^s(\xi)_x)\in\overline{\bbQ}_l\cong \bbC$$
and hence a function $\chi(A)\in \tilde{\cH}_{G^{\operatorname{Fr}^r}}(X^{\operatorname{Fr}^r}),$ defining a map from $\cD^b_{G,r,m}(X)$ to $\tilde{\cH}_{G^{\operatorname{Fr}^r}}(X^{\operatorname{Fr}^r}),$ called the trace map.

The geometric multiplication on $\oplus_{\nu}\cH_{\bfG_{\nu}^{\operatorname{Fr}^r}}(\bfE_{\nu}^{\operatorname{Fr}^r})$ is defined as follows.
\begin{definition}
 The geometric multiplication $\cH_{\bfG_{\nu'}^{\operatorname{Fr}^r}}(\bfE_{\nu'}^{\operatorname{Fr}^r})\otimes \cH_{\bfG_{\nu''}^{\operatorname{Fr}^r}}(\bfE_{\nu''}^{\operatorname{Fr}^r})\rightarrow \cH_{\bfG_{\nu}^{\operatorname{Fr}^r}}(\bfE_{\nu}^{\operatorname{Fr}^r})$ is defined by $$f\cdot g=\frac{(-\sqrt{q})^{-\sum_{i\in Q_0} \nu'_i\nu''_i-\sum_{h\in H}\nu'_{s(h)}\nu''_{t(h)}}}{|\bfG_{\nu'}^{\operatorname{Fr}^r}\times \bfG_{\nu''}^{\operatorname{Fr}^r}|}{p_3}_!{p_2}_!p_1^*(f\otimes g),$$ where $p_1,$ $p_2,$ and $p_3$ are the maps in Equation~\ref{eq111}.
\end{definition}
\begin{theorem}[{\cite[Theorem 5.3.13]{Pramod-2021}}]\label{sheaf-function correspondence}
For any $G$-equivariant morphism $f:X\rightarrow Y$ that respects the $\bbF_q$-structures and any $A\in \cD^b_{G,r,m}(X), B\in \cD^b_{G,r,m}(Y)$, we have
\begin{align*}
&\chi({A[n]})=(-1)^n\chi(A),\ \chi({A(\frac{n}{2})})=\sqrt{q}^{-n}\chi(A),\ \chi({A\boxtimes B})=\chi(A)\otimes \chi(B),\\
&\chi({f^*(B)})=f^*(\chi(B)),\ \chi({f_!(A)})=f_!(\chi(A)).
\end{align*}
\end{theorem}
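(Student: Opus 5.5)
The plan is to reduce every identity to a statement about stalks at a single rational point, together with the Frobenius action on those stalks. The only identity that is not formal is the one for $f_!$, and for it we invoke the Grothendieck--Lefschetz trace formula. We work throughout with the fixed Weil structure $\xi\colon(\operatorname{Fr}^r)^*A\to A$. Every functor in the statement carries such structures to such structures, so we only need to compare stalk traces.

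\emph{Shifts and twists.} Since $H^s(A[n])_x=H^{s+n}(A)_x$ compatibly with $\xi$, reindexing the alternating sum gives the sign $(-1)^n$. For the twist, $A(\tfrac n2)=A\otimes\overline{\bbQ}_l(\tfrac n2)$. Here $\overline{\bbQ}_l(\tfrac n2)$ is the rank-one sheaf on which geometric Frobenius $\operatorname{Fr}^r$ acts by $\sqrt q^{\,-n}$, for the fixed choice of square root of $q$ used to define half twists. Tensoring with a rank-one constant sheaf multiplies each $\tr(H^s(\xi)_x)$ by this scalar.

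\emph{External products and pullback.} For $A\boxtimes B$ we use the Künneth isomorphism on stalks,
\[
H^s(A\boxtimes B)_{(x,y)}\simeq\bigoplus_{a+b=s}H^a(A)_x\otimes H^b(B)_y ,
\]
which is Frobenius-equivariant. Multiplicativity of traces on tensor products, together with $(-1)^{a+b}=(-1)^a(-1)^b$, gives $\chi(A)\otimes\chi(B)$. For $f^*B$ at $x\in X^{\operatorname{Fr}^r}$, the stalk $H^s(f^*B)_x$ is canonically $H^s(B)_{f(x)}$. Because $f$ commutes with Frobenius, the induced Weil structure on $f^*B$ is $f^*\xi$, which corresponds to $\xi$ at $f(x)$ under this identification. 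Hence $\chi(f^*B)(x)=\chi(B)(f(x))$.

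\emph{Pushforward.} Fix $y\in Y^{\operatorname{Fr}^r}$ and let $i_y\colon X_y=f^{-1}(y)\to X$ be the fibre, a variety over $\bbF_q$. Proper base change gives a Frobenius-compatible isomorphism
\[
(f_!A)_y\simeq R\Gamma_c\bigl(X_y\otimes\overline{\bbF}_q,\ i_y^*A\bigr).
\]
The Grothendieck--Lefschetz trace formula for complexes with compact supports then gives
\[
\sum_s(-1)^s\tr\bigl(\operatorname{Fr}^r\mid H^s_c(X_y,i_y^*A)\bigr)=\sum_{x\in X_y^{\operatorname{Fr}^r}}\chi(i_y^*A)(x)=\sum_{x\in f^{-1}(y)}\chi(A)(x).
\]
The second equality uses the pullback case already established. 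The right-hand side is exactly $f_!(\chi(A))(y)$.

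The main obstacle is this last step. The trace formula for $R\Gamma_c$ of a bounded constructible complex is deep (SGA~$4\tfrac12$); we cite it rather than reprove it. Two points need checking. First, equivariance is irrelevant here: $G$-invariance of $\chi(A)$ follows from the $G$-equivariant structure, and the equivariant derived category maps to the ordinary one compatibly with all the functors above. Second, the sign conventions for geometric Frobenius versus arithmetic Frobenius must match those used to define $\chi$.
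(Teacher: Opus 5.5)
Your proof is correct and is the standard argument behind this statement. The paper gives no proof of its own, only the citation to Achar's book, and your argument is the usual proof of that result: stalkwise checks for the shift, the twist, $\boxtimes$ and $f^*$, and proper base change plus the Grothendieck--Lefschetz trace formula for $f_!$. The only adjustment is bibliographic. The objects here are Weil complexes, with Frobenius structure $\xi\colon(\operatorname{Fr}^r)^*A\to A$, rather than \'etale complexes on an $\mathbb F_q$-model, so you should quote the trace formula in Deligne's extension to Weil sheaves (Weil~II) rather than directly from SGA~$4\tfrac12$.
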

 Moreover, $\chi(\mf\diamond\mathcal{G})=\chi(\mathcal{F})\cdot\chi(\mathcal{G}),$ and Lusztig proves in \cite{lusztig1998canonical} that the $\bbZ[\sqrt{q},\sqrt{q}^{-1}]-$span of $\bigsqcup_{\nu}\left\{\chi(\mathcal{L}_{\underline{\nu}})|\underline{\nu}\in S_{\nu}\right\}$ is isomorphic to the $\bbZ-$form of $\bfU_{v}^{-},$ $v=-\sqrt{q},$ with respect to the multiplication $\cdot,$ as explained in detail in \cite{lusztig2010introduction}. 
\subsection{Canonical basis}
In \cite{Lusztig1992}, Lusztig shows that the simple perverse sheaves occurring, up to shift, as direct summands of flag sheaf complexes constitute the sheaf realization of the canonical basis. We denote this set by $\mP=\bigcup_{\nu}\mathcal{P}_{\nu},$ and its elements have the form $IC(X(c,m),L_{\chi}),$ where $c_{nh,j}$ is aperiodic for every $j$ and $\chi\in X(S_m)$ is a character of the symmetric group $S_m.$

In \cite{lusztig1998canonical}, Lusztig defines a Weil structure $\phi_{c,\chi}$ on each simple perverse sheaf $IC(X(c,m),L_{\chi})\in \mP,$ and shows that the resulting trace functions $\chi(IC(X(c,m),L_{\chi}),\phi_{c,\chi})$ form the canonical basis of $\bfU^-_{v=-\sqrt{q}}.$ Actually there is a canonical Weil structure $\phi_{c,\chi}$ on any $IC(X(c,m),L_{\chi})$ induced by the Weil structure on $L_{\chi}|_{X(c,m)},$ for $(c,m)\in \widetilde \cG_{\nu}.$

This Weil structure on $IC(X(c,m),L_{\chi})$ is induced by the following complex: $$\mathcal{L}_{c,(1,\cdots,1)}:=IC(\mo_{c_P},\overline{\bbq_l})\diamond IC(\mo_{c_{nh,1}},\overline{\bbq_l})\diamond \cdots \diamond IC(\mo_{c_{nh,l}},\overline{\bbq_l})\diamond \prod^m IC(X(0,1),\overline{\bbq_l})\diamond IC(\mo_{c_I},\overline{\bbq_l}),$$ of which $IC(X(c,m),\overline{\bbq_l})$ is a direct summand.
\subsection{PBW basis}
Consider ${j^{c,m}}^*\ml_{c,(1,\cdots,1)},$ where $c_{nh,j}$ need not be aperiodic. This complex carries a natural Weil structure $\psi_{c,m},$ whose trace function we compute using the following morphism: $$\begin{aligned}  
&\left\{
(x,F)\ \middle|\
\begin{gathered}
x\in \bfE_\nu,\quad
F=(0=V_0\subset V_1=V_{2,0}\subset V_{2,1}\subset\cdots\subset V_{2,e}=V_{3,0}\subset\cdots \subset V_{3,m}\subset V_4=\bfV),x(F)\subset F,\\
M_x|_{V_1/V_0}\simeq c_I, M_{x}|_{V_{2,i}/V_{2,i-1}}\simeq c_{nh,i}, M_x|_{V_{3,i}/V_{3,i-1}}\text{ is homogeneous regular}, M_x|_{V_4/V_{3,m}}\simeq c_P
\end{gathered}
\right\}\\ &\xrightarrow{\pi_{c,(1,\cdots,1)}} Y(c,m).\end{aligned}$$
\begin{lemma}\label{small}
    $\pi_{c,(1,\cdots,1)}$ is a small morphism respect to $X(c,m)$.
\end{lemma}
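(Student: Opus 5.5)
We prove that $\pi:=\pi_{c,(1,\cdots,1)}\colon\widetilde Y\to Y(c,m)$ is proper, that it is finite étale over $X(c,m)$, and that over the complement $Y(c,m)\setminus X(c,m)$ its fibres satisfy the strict smallness inequality. Write $\widetilde Y$ for the source variety.

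\textbf{Properness.} The map $\pi$ is the restriction over $Y(c,m)$ of the projection from a closed subvariety of $Y(c,m)\times(\text{partial flag variety})$.

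\textbf{Reduction to the homogeneous part.} Let $x\in Y(c,m)$ with $M_x\simeq c\oplus R$. By Proposition~\ref{homext} and \eqref{eq:hom-separation}--\eqref{eq:tube-orthogonality}, the flag in any point of the fibre is forced.
\begin{itemize}
\item $V_1$ is the unique subrepresentation isomorphic to $c_I$.
\item The steps $V_{2,i}$ are the unique subrepresentations cutting out the tubes $\cT_i$.
\item $V_{3,m}$ is forced in the same way.
\end{itemize}
The point is that $\Hom(c_I,\text{rest})=0$ and $\Ext^1$ vanishes in the ordered direction. Lemma~\ref{regular} guarantees that subquotients of defect zero are regular, so the nonhomogeneous pieces must sit in their own tubes. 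Hence $\pi^{-1}(x)$ is isomorphic to the variety of flags $0=W_0\subset W_1\subset\cdots\subset W_m=R$ of subrepresentations of $R$ with $\ddim W_i/W_{i-1}=\delta$ and every subquotient homogeneous regular. This is exactly the fibre of the corresponding map for $\bfE_{m\delta}$ in \cite[6.6]{Lusztig1992}, and it is independent of $c$.

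\textbf{Over $X(c,m)$.} If $R=\bigoplus R_{u_i}[1]$ with the $u_i$ distinct, the subrepresentations of dimension $k\delta$ are exactly the partial sums. The fibre is therefore $S_m$, and $\pi$ is an étale $S_m$-covering there; this is $p_{c,(1,\ldots,1)}$.

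\textbf{Stratification of the complement.} Split $Y(c,m)\setminus X(c,m)$ by the multiset of homogeneous tube data. Let $R=\bigoplus_{s=1}^{t}R_{u_s}(\lambda^{(s)})$, with $u_s$ distinct, $\lambda^{(s)}$ a partition of $m_s$, $\sum_s m_s=m$, and $t<m$ on the complement. Call this stratum $Y_{\underline\lambda}$.

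\textbf{Fibre dimension.} Flags decompose along tubes, so the fibre is a product of shuffles of flag varieties for each $R_{u_s}(\lambda^{(s)})$ in $\nilrep C_1$, that is, nilpotent matrices of type $\lambda^{(s)}$. Each such factor is a Springer fibre of dimension $n(\lambda^{(s)})=\sum_i (i-1)\lambda^{(s)}_i$.

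\textbf{Codimension of the stratum.} The codimension of $Y_{\underline\lambda}$ in $Y(c,m)$ equals its codimension inside $Y(0,m)$, computed via the fibration of Proposition~\ref{prop:density}. In $\bfE_{m\delta}$ the stratum has
\[
\dim=\dim\bfG_{m\delta}-\dim\End(R)+t,
\]
where $t$ counts the free parameters $u_s$. The generic value is $\dim\bfG_{m\delta}-m+m$, since $\End(R)=k^m$ there. Using $\dim\End(R_u(\lambda))=\sum_i(2i-1)\lambda_i=m_s+2n(\lambda)$, the codimension is
\[
\sum_s\bigl(m_s+2n(\lambda^{(s)})\bigr)-t=\sum_s\bigl(2n(\lambda^{(s)})+m_s-1\bigr).
\]
Strict smallness therefore requires
\[
2\sum_s n(\lambda^{(s)})<\sum_s\bigl(2n(\lambda^{(s)})+m_s-1\bigr),
\]
which is equivalent to $\sum_s(m_s-1)>0$. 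This holds exactly because $t<m$ off $X(c,m)$.

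The main obstacle is justifying that the codimension of $Y_{\underline\lambda}$ inside $Y(c,m)$ equals the one computed in $\bfE_{m\delta}$ for arbitrary $c$. I would handle this through the $P$-bundle description $X'\times_P\bfG_\nu$ from Proposition~\ref{prop:density}. The fibres over $Y(c_I,0)$, $Y(c_{nh},0)$ and $Y(c_P,0)$ are homogeneous, and the extension data form vector bundles of constant rank. Constancy of rank follows because the relevant $\Hom$ and $\Ext^1$ dimensions between $R$ and $c$ depend only on $\ddim R=m\delta$ by \eqref{eq:euler} together with the vanishings of Proposition~\ref{homext}.
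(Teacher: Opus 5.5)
Your proposal is correct and follows essentially the same route as the paper: you stratify $Y(c,m)\setminus X(c,m)$ by homogeneous tube types, bound each fibre by the product of Springer-fibre dimensions $n(\lambda^{(s)})$, compute the codimension from $\dim\End$ of the homogeneous part with the cross terms against $c$ constant, and conclude from $t<m$; your explicit check that the flag is forced on the $c$-part is a detail the paper leaves implicit. The one point the paper records that you omit is that the source of $\pi_{c,(1,\cdots,1)}$ is smooth of dimension $\dim X(c,m)$ (by the bundle argument of Proposition~\ref{prop:density}), and you should add it because it is what turns smallness into the identification of $\pi_!\overline{\mathbb Q}_l[\dim X(c,m)]$ with the intermediate extension used immediately afterwards.
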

\begin{proof}
    The proof is the same as the argument of \cite[Lemma 2.5]{WX2026}. $\pi_{c,(1,\cdots,1)}$ is proper since it could be regarded as a proper morphism $$\begin{aligned}  
&\left\{
(x,F)\ \middle|\
\begin{gathered}
x\in \bfE_\nu,\quad
F=(0=V_0\subset V_1=V_{2,0}\subset V_{2,1}\subset\cdots\subset V_{2,e}=V_{3,0}\subset\cdots \subset V_{3,m}\subset V_4=\bfV),x(F)\subset F,\\
x|_{V_1/V_0}\in \overline{\mo_{c_I}}, {x}|_{V_{2,i}/V_{2,i-1}}\in \overline{\mo_{c_{nh,i}}}, x|_{V_4/V_{3,m}}\in \overline{\mo_{c_P}}
\end{gathered}
\right\}\\ &\rightarrow \bfE_{\nu}\end{aligned}$$ restricting on $Y(c,m).$

    First, $X(c,m)$ is open and dense in $Y(c,m)$ and for any $x\in X(c,m),$ $(\pi_{c,(1,\cdots,1)})^{-1}(x)$ is a set of finite points. Moreover, $$\left\{
(x,F)\ \middle|\
\begin{gathered}
x\in \bfE_\nu,\quad
F=(0=V_0\subset V_1=V_{2,0}\subset V_{2,1}\subset\cdots\subset V_{2,e}=V_{3,0}\subset\cdots \subset V_{3,m}\subset V_4=\bfV),x(F)\subset F,\\
M_x|_{V_1/V_0}\simeq c_I, M_{x}|_{V_{2,i}/V_{2,i-1}}\simeq c_{nh,i}, M_x|_{V_{3,i}/V_{3,i-1}}\text{ is homogeneous regular}, M_x|_{V_4/V_{3,m}}\simeq c_P
\end{gathered}
\right\}$$ is smooth with the same dimension as $\dim X(c,m)$ by the same method as in Proposition \ref{prop:density}.

    Denote \begin{align} 
    Z(c,(t_1,\cdots,t_r)) &=\bigl\{x\in \bfE_\nu:\ M_x\simeq
       c\oplus (R_{u_1}[t_1^1]\oplus\cdots\oplus R_{u_1}[t_1^{j_1}])\oplus\cdots\oplus (R_{u_r}[t_r^1]\oplus\cdots\oplus R_{u_r}[t_r^{j_r}]),\notag\\[-2pt]
 &\hspace{38mm}u_1,\ldots,u_r\in Z_{h}
       \text{ pairwise distinct}\bigr\},\end{align} where $t_i=(t_i^1,\cdots,t_i^{j_i}),$ $t_i^1\leq \cdots\leq t_i^{j_i}\in \bbN_{>0}$ and $\sum_{i=1}^r\sum_{s=1}^{j_i}t_i^s=m.$ It is a locally closed subset by \cite{BONGARTZ2001268}.
       
       Denote $l_i=\sum_{s=1}^{j_i}t_i^s$ and for $x\in Z(c,(t_1,\cdots,t_r)),$ the part $R_{u_i}[t_i^1]\oplus\cdots\oplus R_{u_i}[t_i^{j_i}]$ of $M_x$ is denoted by $M_{x}^{u_i}.$

       $Y(c,m)=\bigsqcup Z(c,(t_1,\cdots,t_r))$ and $Z(c,((1),\cdots,(1)))=X(c,m).$ Fix $x_0\in X(c,m).$

       For $x\in Z(c,(t_1,\cdots,t_r))$ other that $X(c,m),$ as in \cite[Lemma 2.5]{WX2026}, Springer resolution is semismall, $\dim f^{-1}(x)\leq \sum_{i=1}^r\frac{1}{2}(l_i(l_i-1)-(l_i^2-\Hom_{kQ}(M_{x}^{u_i},M_{x}^{u_i}))).$

       Then \[\begin{aligned}
           \dim f^{-1}(x)&\leq \sum_{i=1}^r\frac{1}{2}(l_i(l_i-1)-(l_i^2-\dim\Hom_{kQ}(M_{x}^{u_i},M_{x}^{u_i})))\\
           &= \sum_{i=1}^r\frac{1}{2}(\dim\Hom_{kQ}(M_{x}^{u_i},M_{x}^{u_i})-l_i)\\
           &= \frac{1}{2}((\sum_{i=1}^r \dim\Hom_{kQ}(M_{x}^{u_i},M_{x}^{u_i}) )-m)\\
           &= \frac{1}{2}(\dim\mo_{x_0}-\dim\mo_{x})\\
           &= \frac{1}{2}(\dim X(c,m)-\dim Z(c,(t_1,\cdots,t_r))+r-m).
       \end{aligned}\]

       For the reason that $Z(c,(t_1,\cdots,t_r))\not=X(c,m),$ $r<m.$ Thus $$\dim f^{-1}(x)<\frac{1}{2}(\dim X(c,m)-\dim Z(c,(t_1,\cdots,t_r))).$$

       We have the proof.
\end{proof}
  Thus $${\pi_{c,(1,\cdots,1)}}_!\overline{\bbq_l}[\dim X(c,m)](\frac{\dim(X(c,m))}{2})\simeq{\iota_{c,m}}_{!*}(\oplus_{\chi\in X(S_m)}L_{\chi}^{\oplus K_{(1,\cdots,1),\chi}}[\dim X(c,m)](\frac{\dim(X(c,m))}{2}))$$ with the same Weil structure.
Similarly, for $\underline{\lambda}=(\lambda_1,\cdots,\lambda_l),$ where $\sum_{i=1}^l\lambda_i=m,$ the morphism 
$$\begin{aligned}  
&\left\{
(x,F)\ \middle|\
\begin{gathered}
x\in \bfE_\nu,\quad
F=(0=V_0\subset V_1=V_{2,0}\subset V_{2,1}\subset\cdots\subset V_{2,e}=V_{3,0}\subset\cdots \subset V_{3,l}\subset V_4=\bfV),\\
 M_x|_{V_{3,i}/V_{3,i-1}}\text{ is homogeneous regular of dimension } \lambda_i\delta,\\x(F)\subset F,\,M_x|_{V_1/V_0}\simeq c_I, M_{x}|_{V_{2,i}/V_{2,i-1}}\simeq c_{nh,i}, M_x|_{V_4/V_{3,m}}\simeq c_P
\end{gathered}
\right\}\\ &\xrightarrow{\pi_{c,\underline{\lambda}}} Y(c,m),\end{aligned}$$ is also small respect to $X(c,m)$ and thus ${\pi_{c,\underline{\lambda}}}_!\overline{\bbq_l}$ is independent of the order of $(\lambda_1,\cdots,\lambda_l).$ We choose $\underline{\lambda}=(\lambda_1,\cdots,\lambda_l),$ with $\lambda_1\geq\cdots\geq \lambda_l.$   \begin{equation}\label{eq1}
    {\pi_{c,\underline{\lambda}}}_!\overline{\bbq_l}[\dim X(c,m)](\frac{\dim X(c,m)}{2})\simeq {\iota_{c,m}}_{!*}(\oplus_{\chi\in X(S_m)}L_{\chi}[\dim X(c,m)](\frac{\dim(X(c,m))}{2})^{\oplus K_{\underline{\lambda},\chi}})
\end{equation} with the corresponding Weil structure.

Moreover, ${\iota_{c,m}}_{!*}L_{\chi}[\dim X(c,m)](\frac{\dim(X(c,m))}{2})\simeq {j^{c,m}}^*IC(X(c,m),L_{\chi})$ with the corresponding Weil structure. We denote the Weil structure on ${\iota_{c,m}}_{!*}L_{\chi}[\dim X(c,m)](\frac{\dim(X(c,m))}{2})\simeq {j^{c,m}}^*IC(X(c,m),L_{\chi})$ by $\psi_{c,\chi}={j^{c,m}}^*(\phi_{c,\chi}).$

We now recall the Hall elements of \cite[Sections 6 and 8]{XIAO2023510}
in our notation and compare them with the above trace functions.
Put \(q=p^r\), \(v=-\sqrt q\), and
\(\lvert\beta\rvert=\sum_{i\in Q_0}\beta_i\).
The Hall-algebra formulas below are specialized at this value of \(v\).

\begin{definition}\label{def:hall-N-ct}
Let \(\mathcal H_q(Q)\) be the twisted Hall algebra with basis
\(\{u_{[M]}\}\), indexed by the isomorphism classes of
finite-dimensional \(\mathbb F_qQ\)-modules, and multiplication
\[
 u_{[M]}*u_{[N]}
 =v^{\langle\ddim M,\ddim N\rangle}
   \sum_{[L]}g_{MN}^{L}u_{[L]},
\]
where
\[
 g_{MN}^{L}
 =\#\{\,U\subseteq L:U\simeq N,\ L/U\simeq M\,\},
 \qquad
 \langle\mu,\eta\rangle
 =\sum_{i\in Q_0}\mu_i\eta_i
  -\sum_{h\in H}\mu_{s(h)}\eta_{t(h)}.
\]
For such a module \(M\), set
\[
 \langle M\rangle
 =v^{-\lvert\ddim M\rvert+
          \dim_{\mathbb F_q}\End_{\mathbb F_qQ}(M)}u_{[M]}.
\]
For \(n\geq1\), define
\[
 H_n=v^{-n\lvert\delta\rvert}
       \sum_{\substack{[R],\ R\ {\rm homogeneous\ regular}\\
                        \ddim R=n\delta}}u_{[R]},
 \qquad H_0=1,\qquad H_n=0\quad(n<0).
\]
The sum is over all homogeneous regular
\(\mathbb F_qQ\)-modules of the indicated dimension vector.

For a partition \(\lambda=(\lambda_1,\ldots,\lambda_l)\) of \(m\),
let \(t_\lambda\in X(S_m)\) be the corresponding irreducible character,
and set
\[
 S_\lambda=\det\bigl(H_{\lambda_a-a+b}\bigr)_{1\leq a,b\leq l}.
\]
The determinant is taken in the commutative subalgebra generated by
the \(H_n\), with multiplication \(*\). We set \(S_{\varnothing}=1\).

Let \(c\simeq c_P\oplus c_{nh}\oplus c_I\), with
\(c_{nh}\simeq\bigoplus_{j=1}^{e}c_{nh,j}.\)
Using the corresponding \(\mathbb F_q\)-forms, define
\[
 \begin{aligned}
 N(c,t_\lambda)
 &:=\langle c_P\rangle*
       \langle c_{nh,1}\rangle*\cdots*
       \langle c_{nh,e}\rangle*
       S_\lambda*\langle c_I\rangle\\
 &=\langle c_P\rangle*\langle c_{nh}\rangle*
       S_\lambda*\langle c_I\rangle.
 \end{aligned}
\]
The second equality follows from the vanishing of \(\Hom\) and
\(\Ext^1\) between distinct non-homogeneous tubes.
No aperiodicity assumption on \(c_{nh,j}\) is imposed.
\end{definition}

We identify $\cH_{\bfG_{\nu}^{\operatorname{Fr}^r}}(\bfE_{\nu}^{\operatorname{Fr}^r})$ with the degree-\(\nu\) part of \(\mathcal H_q(Q)\)
by sending the characteristic function
\(\mathbf1_{\mathcal O_M^{\operatorname{Fr}^r}}\) to \(u_{[M]}\).
The geometric multiplication \(\cdot\) and the Hall multiplication
\(*\) are compared in the following lemma.

\begin{lemma}\label{lem:trace-N-ct}
Let \(\lambda\) be a partition of \(m\), put \(\chi=t_\lambda\), and
let \(\nu=\ddim c+m\delta\).
With the Weil structure
\(\psi_{c,\chi}=(j^{c,m})^*\phi_{c,\chi}\), set
\[
 f_{c,\chi}
 :=\chi\bigl((j^{c,m})_!(j^{c,m})^*IC(X(c,m),L_\chi),
                  (j^{c,m})_!\psi_{c,\chi}\bigr).
\]
Under the above vector-space identification, we have
\[
 \boxed{\displaystyle
 f_{c,\chi}
   =v^{-\dim\bfG_\nu+\lvert\nu\rvert}N(c,t_\lambda).}
\]
\end{lemma}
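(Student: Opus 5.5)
The plan is to compute both sides as functions on $\bfE_\nu^{\operatorname{Fr}^r}$ supported on $Y(c,m)^{\operatorname{Fr}^r}$ and compare them. The left side will be computed through the small resolutions $\pi_{c,\underline\lambda}$, and the right side by expanding the Hall products.

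First, I express $(j^{c,m})^*IC(X(c,m),L_\chi)$ through the complexes ${\pi_{c,\underline\lambda}}_!\overline{\bbq_l}[\dim X(c,m)](\frac{\dim X(c,m)}{2})$ of \eqref{eq1}. By \eqref{eq1} their trace functions are
\[
\sum_\chi K_{\underline\lambda,\chi}\,\chi\bigl((j^{c,m})^*IC(X(c,m),L_\chi),\psi_{c,\chi}\bigr).
\]
The Kostka matrix is unitriangular, so its inverse expresses each $\chi(\cdot,\psi_{c,t_\lambda})$ as a $\bbZ$-combination of the traces $\chi({\pi_{c,\underline\mu}}_!\overline{\bbq_l})$, with a common normalizing factor $(-\sqrt q)^{-\dim X(c,m)}=v^{-\dim X(c,m)}$ coming from Theorem~\ref{sheaf-function correspondence}. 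The inverse-Kostka combination of the complete symmetric functions $h_\mu=H_{\mu_1}\cdots H_{\mu_l}$ is exactly the Jacobi--Trudi determinant $S_\lambda$. It therefore suffices to prove the corresponding identity for each $\underline\mu$:
\[
v^{-\dim X(c,m)}\chi\bigl((j^{c,m})_!{\pi_{c,\underline\mu}}_!\overline{\bbq_l}\bigr)
=v^{-\dim\bfG_\nu+|\nu|}\,\langle c_P\rangle*\langle c_{nh}\rangle*H_{\mu_1}*\cdots*H_{\mu_l}*\langle c_I\rangle .
\]

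Second, I evaluate both sides of this identity at a point $x\in Y(c,m)^{\operatorname{Fr}^r}$. The value of ${\pi_{c,\underline\mu}}_!\overline{\bbq_l}$ at $x$ is the number of $\bbF_q$-rational flags $F$ with $x(F)\subset F$ whose subquotients are isomorphic, in order, to $c_I$, then $c_{nh,1},\dots,c_{nh,e}$, then homogeneous regular modules of dimensions $\mu_i\delta$, then $c_P$. The iterated Hall product counts exactly the same data: the structure constants $g^L_{MN}$ count submodules with prescribed sub and quotient. The submodule ordering matches because $c_I$ sits at the bottom, and $\Hom$/$\Ext$ vanishing (Proposition~\ref{homext}) together with Lemma~\ref{regular} guarantees the following. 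Every homogeneous regular summand of dimension $\mu_i\delta$ produced in the product lies in $Y(c,m)$ only through such filtrations, and every $\bbF_q$-form of a homogeneous regular module is counted once in $H_{\mu_i}$. Since the characteristic function of $Y(c,m)$ kills everything outside, the two point counts agree up to a power of $v$.

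Third, the powers of $v$ must match. The product carries the factors $v^{\langle\ddim M,\ddim N\rangle}$ from each $*$, the factors $v^{-|\ddim M|+\dim\End(M)}$ from $\langle c_P\rangle,\langle c_{nh,j}\rangle,\langle c_I\rangle$, and the factors $v^{-\mu_i|\delta|}$ from the $H_{\mu_i}$. The left side carries $v^{-\dim X(c,m)}$. I expect this bookkeeping to be the main obstacle. My plan is to rewrite
\[
\dim X(c,m)=\dim\bfG_\nu-\dim\End(c)-m,
\]
since the generic endomorphism algebra of $c\oplus R_{u_1}[1]\oplus\cdots\oplus R_{u_m}[1]$ has dimension $\dim\End(c)+m$ and the $u_i$ vary in an $m$-parameter family. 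Next, I expand $\dim\End(c)$ using the vanishing of $\Ext^1$ in the ordered direction: it equals $\sum\dim\End$ of the pieces plus the cross-$\Hom$ terms, and these cross terms equal the Euler forms $\langle\cdot,\cdot\rangle$ by \eqref{eq:euler}. The contributions $|\ddim|$ add up to $|\nu|$, and $\langle\ddim c,\delta\rangle$-type terms for the homogeneous pieces vanish because homogeneous regular modules have defect zero. Carefully tallying these exponents should leave exactly $v^{-\dim\bfG_\nu+|\nu|}$ on both sides, which proves the lemma.
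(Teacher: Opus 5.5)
Your overall route is the paper's. The same Kostka numbers appear in \eqref{eq1} and in $H_{\underline\mu}=\sum_\lambda K_{\underline\mu,t_\lambda}S_\lambda$. So it suffices to match the trace of $(j^{c,m})_!\pi_{c,\underline\mu!}\overline{\mathbb Q}_l[d](d/2)$, where $d=\dim X(c,m)$, with $v^{-\dim\bfG_\nu+|\nu|}\langle c_P\rangle*\langle c_{nh}\rangle*H_{\underline\mu}*\langle c_I\rangle$, and the flag counts do agree. Two small corrections to your second step:
\begin{enumerate}
\item The flags of $\pi_{c,\underline\mu}$ put $c_{nh}$ below the homogeneous pieces, whereas the Hall product puts the $H_{\mu_i}$ below $\langle c_{nh}\rangle$. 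You need to commute $\langle c_{nh}\rangle$ past the $H_n$, which \eqref{eq:tube-orthogonality} allows.
\item The product is automatically supported on $Y(c,m)$, since every extension that occurs splits or stays in $\mathbf R_h$. No characteristic-function cut-off is needed.
\end{enumerate}

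The gap is in the step you single out as the crux: the power-of-$v$ tally rests on false ingredients and, as planned, does not close.
\begin{enumerate}
\item The formula $\dim X(c,m)=\dim\bfG_\nu-\dim\End(c)-m$ already fails for $c=0$. There $X(0,m)$ is open dense in $\bfE_{m\delta}$ and $\dim\bfE_{m\delta}=\dim\bfG_{m\delta}$; for the Kronecker quiver, $X(0,1)=\bfE_\delta\setminus\{0\}$ has dimension $2$, not $1$. Your own heuristic (a stabilizer of dimension $\dim\End(c)+m$ and an $m$-parameter family of orbits) produces no $-m$ at all.
\item That heuristic is itself incomplete. The generic endomorphism algebra of $c\oplus R_{u_1}[1]\oplus\cdots\oplus R_{u_m}[1]$ also contains $\Hom(c_P,R)$ and $\Hom(R,c_I)$. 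These have dimensions $\langle\ddim c_P,m\delta\rangle=-m\,\partial(c_P)$ and $\langle m\delta,\ddim c_I\rangle=m\,\partial(c_I)$, which are positive as soon as $c_P\neq0$ or $c_I\neq0$.
\item For the same reason, the factors $v^{\langle\ddim c_P,\mu_i\delta\rangle}$ and $v^{\langle\mu_i\delta,\ddim c_I\rangle}$ in the Hall product do not disappear. Defect zero only kills $\langle\delta,\delta\rangle$ and the pairings of $\delta$ with $\ddim c_{nh}$.
\end{enumerate}
With your ingredients, the two sides differ by $v^{m}$ already when $c=0$.

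Here is the correct bookkeeping. Use $\Ext^1(c_P,-)=0$ and $\Ext^1(-,c_I)=0$ on the relevant pieces, together with the vanishing of the reverse $\Hom$'s. Then the right side equals $v^{-\dim\bfG_\nu+\dim\End(c)+\langle\ddim c_P,m\delta\rangle+\langle m\delta,\ddim c_I\rangle}$ times the flag count. This agrees with $v^{-d}$ because
\[
 \dim X(c,m)=\dim\bfG_\nu-\dim\End(c)-\langle\ddim c_P,m\delta\rangle-\langle m\delta,\ddim c_I\rangle .
\]

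The paper never needs this formula explicitly. It uses the rescaling $\Phi_\beta(\mathbf 1_{\mathcal O_M})=v^{\dim\bfG_\beta-|\beta|}u_{[M]}$:
\begin{enumerate}
\item $\Phi$ is multiplicative, because $a_{\mu+\eta}-b(\mu,\eta)=a_\mu+a_\eta+\langle\mu,\eta\rangle$.
\item $\Phi$ sends $v^{-\dim\mathcal O_M}\mathbf 1_{\mathcal O_M}$ to $\langle M\rangle$ and $v^{-\dim\bfE_{n\delta}}\mathbf 1_{Y(0,n)}$ to $H_n$.
\item The only dimension input is that the source of $\pi_{c,\underline\mu}$ has dimension $\dim X(c,m)$, as in the proof of Lemma~\ref{small}. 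This identifies $R_{c,\underline\mu}$ with the geometric product of the normalized orbit functions.
\end{enumerate}
If you keep your direct count, replace your dimension formula by the displayed one.
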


\begin{proof}
For dimension vectors \(\mu,\eta\), write
\[
 b(\mu,\eta)
 =\sum_{i\in Q_0}\mu_i\eta_i+
   \sum_{h\in H}\mu_{s(h)}\eta_{t(h)},
 \qquad
 a_\beta=\dim\bfG_\beta-\lvert\beta\rvert.
\]
The definition of the multiplication on $\oplus_{\nu}\cH_{\bfG_{\nu}^{\operatorname{Fr}^r}}(\bfE_{\nu}^{\operatorname{Fr}^r})$ gives
\[
 \mathbf1_{\mathcal O_M^{\operatorname{Fr}^r}}\cdot\mathbf1_{\mathcal O_N^{\operatorname{Fr}^r}}
 =v^{-b(\mu,\eta)}
    \sum_{[L]}g_{MN}^{L}\mathbf1_{\mathcal O_L^{\operatorname{Fr}^r}},
 \qquad \mu=\ddim M,\quad\eta=\ddim N.
\]
Since \(\dim\bfG_\beta=\sum_i\beta_i^2\), we have
\[
 a_{\mu+\eta}-b(\mu,\eta)
   =a_\mu+a_\eta+\langle\mu,\eta\rangle.
\]
Consequently, the maps
\[
 \Phi_\beta(\mathbf1_{\mathcal O_M^{\operatorname{Fr}^r}})
       =v^{a_\beta}u_{[M]}
\]
satisfying $$\Phi_\beta(\mathbf1_{\mathcal O_M^{\operatorname{Fr}^r}})*\Phi_\alpha(\mathbf1_{\mathcal O_N^{\operatorname{Fr}^r}})=\Phi_{\beta+\alpha}(\mathbf1_{\mathcal O_M^{\operatorname{Fr}^r}}\cdot \mathbf1_{\mathcal O_N^{\operatorname{Fr}^r}}).$$

The equality
\(\dim\mathcal O_M
 =\dim\bfG_{\ddim M}-\dim_{\mathbb F_q}\End_{\mathbb F_qQ}(M)\)
therefore gives
\[
 \Phi_{\ddim M}
 \bigl(v^{-\dim\mathcal O_M}\mathbf1_{\mathcal O_M^{\operatorname{Fr}^r}}\bigr)
 =\langle M\rangle.
\]
Moreover, \(Y(0,n)\) is open in \(E_{n\delta}\), and
\(\dim E_{n\delta}=\dim\bfG_{n\delta}\), since
\(\langle\delta,\delta\rangle=0\). Hence
\[
 \Phi_{n\delta}
 \bigl(v^{-\dim E_{n\delta}}\mathbf1_{Y(0,n)^{\operatorname{Fr}^r}}\bigr)=H_n.
\]

Put \(d=\dim X(c,m)=\dim Y(c,m)\).
For a partition \(\underline\mu=(\mu_1,\ldots,\mu_s)\) of \(m\),
let \(R_{c,\underline\mu}\) be the trace function of
\[
 (j^{c,m})_!\pi_{c,\underline\mu!}
      \overline{\mathbb Q}_l[d](d/2)
\]
with its natural Weil structure.

Applying \(\Phi\), gives
\[
 \begin{aligned}
 \Phi_\nu(R_{c,\underline\mu})
 &=\langle c_P\rangle*\langle c_{nh}\rangle*
      H_{\underline\mu}*\langle c_I\rangle,
 \qquad H_{\underline\mu}:=H_{\mu_1}*\cdots*H_{\mu_s}.
 \end{aligned}
\]

By \eqref{eq1} and the compatibility of the Weil structures,
\[
 R_{c,\underline\mu}
   =\sum_{\lambda\vdash m}K_{\underline\mu,t_\lambda}
                       f_{c,t_\lambda}.
\]
On the other hand, the complete and Schur functions satisfy
\[
 H_{\underline\mu}
   =\sum_{\lambda\vdash m}K_{\underline\mu,t_\lambda}S_\lambda,
\]
and hence
\[
 \Phi_\nu(R_{c,\underline\mu})
   =\sum_{\lambda\vdash m}K_{\underline\mu,t_\lambda}
                         N(c,t_\lambda).
\]
The Kostka matrix is invertible. Comparing the last two expansions
yields \(\Phi_\nu(f_{c,t_\lambda})=N(c,t_\lambda)\).
Since \(\Phi_\nu\) is multiplication by
\(v^{\dim\bfG_\nu-\lvert\nu\rvert}\) under the stated
vector-space identification, the assertion follows.
\end{proof}

\section{Flag sheaf complexes over $Y(c,m)$}
To describe flag sheaf complexes over $Y(c,m)$ more explicitly, we first prove the following lemma.
\begin{lemma}\label{lem:grassmannian-pushforward}
Let $k$ be a field, let $\ell$ be a prime invertible in $k$, and let
$1\leq n\leq m$. Set
\[
  \operatorname{St}_{m,n}
  :=\{A\in\operatorname{Mat}_{m\times n,k}\mid\operatorname{rank}A=n\},
\]
with the right action of $G=\mathbf{G}l_{n,k}$ given by $A\cdot g=Ag$.
Let $\pi:V\to W$ be a $G$-invariant morphism of $k$-varieties.
Assume that there is an \emph{\'etale} covering $\{U_i\to W\}$ and
$G$-equivariant isomorphisms
\[
  V\times_W U_i\simeq U_i\times\operatorname{St}_{m,n}
\]
over $U_i$, where $G$ acts trivially on $U_i$.
Assume that the quotient $X:=V/G$ is represented by a variety, and denote
the induced morphism by $f:X\to W$.
Define
\[
  \mathcal P_{n,m-n}
  :=\{\lambda=(\lambda_1,\ldots,\lambda_n)
       \mid m-n\geq\lambda_1\geq\cdots\geq\lambda_n\geq0\},
  \qquad |\lambda|:=\sum_{a=1}^n\lambda_a.
\]
Then there is an isomorphism in $D^b_c(W,\overline{\bbq_l})$
\begin{equation}\label{eq:grassmannian-pushforward}
  Rf_!\overline{\bbq_l}_X
  \simeq
  \bigoplus_{\lambda\in\mathcal P_{n,m-n}}
  \overline{\bbq_l}_W(-|\lambda|)[-2|\lambda|].
\end{equation}
In particular, if
$b_j:=\#\{\lambda\in\mathcal P_{n,m-n}:|\lambda|=j\}$, then
\[
  R^{2j}f_!\overline{\bbq_l}_X\simeq\overline{\bbq_l}_W(-j)^{\oplus b_j},
  \qquad
  R^{2j+1}f_!\overline{\bbq_l}_X=0.
\]
If $k=\mathbb F_q$, the
isomorphism is compatible with the natural Weil structures.
\end{lemma}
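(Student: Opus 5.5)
The plan is to reduce to the case where $V$ is globally the trivial $\operatorname{St}_{m,n}$-bundle, so that $X$ is a Grassmannian bundle. The pushforward along a Grassmannian bundle is then determined by the Schubert cell decomposition.

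\emph{Step 1 (local structure of $f$).} Since $\operatorname{St}_{m,n}/G\simeq\operatorname{Gr}(n,m)$ is a free geometric quotient and quotients commute with étale base change, the trivializations give isomorphisms
\[
X\times_W U_i\simeq U_i\times\operatorname{Gr}(n,m)
\]
over $U_i$. Hence $f$ is proper and smooth, and it is étale-locally a Grassmannian bundle. By proper base change, the sheaves $R^jf_!\overline{\bbq_l}_X$ are locally constant. On each $U_i$ they are constant with stalk $H^j_c(\operatorname{Gr}(n,m)_{\bar k},\overline{\bbq_l})$.

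\emph{Step 2 (cohomology of the fibre).} The Schubert cell decomposition of $\operatorname{Gr}(n,m)$ has cells $\mathbb A^{|\lambda|}$ indexed by $\lambda\in\mathcal P_{n,m-n}$, and it is defined over the prime field. Consequently $H^{2j}$ has dimension $b_j$, the odd cohomology vanishes, and Frobenius acts on $H^{2j}$ by $q^j$, which means the Tate twist $(-j)$.

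To globalize this, I will avoid triviality of monodromy and use a tautological rank-$n$ subbundle instead. Since $V\to W$ is étale-locally $U\times\operatorname{St}_{m,n}$, the descent data give a rank-$m$ vector bundle $\mathcal E$ on $W$ with $X\simeq\operatorname{Gr}_n(\mathcal E)$. The point here is that the transition functions lie in $\mathbf{G}l_m$ acting on the left, which commutes with the right $G$-action. If only $G$-equivariant isomorphisms over $U_i$ are given, the transition maps are $G$-equivariant automorphisms of $U\times\operatorname{St}_{m,n}$. I would argue these are given by left multiplication by $\mathbf{G}l_m$, because $\Aut_G(\operatorname{St}_{m,n})$ is $\mathbf{G}l_m$, acting through the columns spanning $k^m$. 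I expect this identification to be the main obstacle and would check it first. It is plausible but needs care, and if it fails I would fall back on the monodromy argument below.

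\emph{Step 3 (global splitting).} On $X=\operatorname{Gr}_n(\mathcal E)$ there is the tautological quotient bundle. The Chern classes $c_1(\mathcal Q),\dots$ give classes $\xi_\lambda\in H^{2|\lambda|}(X,\overline{\bbq_l}(|\lambda|))$, namely Schubert polynomials in these Chern classes. Cup product with $\xi_\lambda$ composed with adjunction gives maps
\[
\overline{\bbq_l}_W(-|\lambda|)[-2|\lambda|]\to Rf_*\overline{\bbq_l}_X=Rf_!\overline{\bbq_l}_X.
\]
By Step 2 and the Leray--Hirsch theorem, the sum of these maps is an isomorphism on every stalk. Because the maps come from Chern classes, they are Frobenius-equivariant when $k=\mathbb F_q$, and this yields the Weil compatibility.

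As an alternative to the Chern class argument, one can observe that the monodromy acts trivially on $H^*(\operatorname{Gr})$, since the structure group is connected. The decomposition then follows from Deligne's degeneration for smooth projective $f$, using Hard Lefschetz, with the summands identified by their weights. The displayed formulas for $R^{2j}f_!$ and $R^{2j+1}f_!$ then follow immediately by taking cohomology sheaves.
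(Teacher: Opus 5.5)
Your proof is correct, and its core is the paper's. Both use global Chern-class expressions on $X$ that restrict to the Schubert basis on every geometric fibre, the adjunction maps $\overline{\bbq_l}_W(-|\lambda|)[-2|\lambda|]\to Rf_*\overline{\bbq_l}_X$, a stalkwise check by proper base change, and Weil compatibility because the classes are defined over $\mathbb F_q$.

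The difference is how you obtain a global bundle. You descend a rank-$m$ bundle $\mathcal E$ to $W$ so that $X\simeq\operatorname{Gr}_n(\mathcal E)$, and you use $c(\mathcal Q)$. This forces you to identify the $G$-equivariant automorphisms of $U\times\operatorname{St}_{m,n}$ over $U$ with $\mathbf{G}l_m(U)$.

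The paper never leaves $X$. Since $V\to X$ is a principal $G$-bundle, $\mathcal S=V\times^G k^n$ is a rank-$n$ bundle on $X$ that restricts to the tautological subbundle on each fibre. The paper takes $s_\lambda(\mathcal S^\vee)=\det(h_{\lambda_a-a+b})$ with $\sum_r h_rt^r=c_t(\mathcal S)^{-1}$. On each fibre $c_t(\mathcal S)^{-1}=c_t(\mathcal Q)$, so these are your Giambelli determinants written without $\mathcal Q$. The step you single out as the main obstacle is therefore unnecessary.

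That step is nevertheless true. Take $U$ affine and $m>n$. The complement of $\operatorname{St}_{m,n}$ in $\operatorname{Mat}_{m\times n}$ has codimension $m-n+1\geq 2$, so an equivariant self-map extends to $U\times\operatorname{Mat}_{m\times n}$. Equivariance under the central $\mathbb G_m\subset G$ makes it linear. A linear map commuting with right multiplication by $G$ is left multiplication by an $m\times m$ matrix, and the case $m=n$ is immediate. Your route gives a more concrete global picture, namely a Grassmannian bundle of a vector bundle on $W$, at the cost of this verification; the paper's route needs only the torsor $V\to X$.

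Your fallback is not independent of the obstacle. ``Connected structure group'' is exactly what the $\mathbf{G}l_m$ identification supplies: for a bare $\operatorname{Gr}(2,4)$-bundle, the duality involution swaps $\sigma_2$ and $\sigma_{1,1}$ in $H^4$. Also, matching weights determines Frobenius eigenvalues only up to scalars of absolute value $1$, so the Weil compatibility would again need the Chern-class argument.
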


\begin{proof}
The map $A\mapsto\operatorname{im}A$ identifies
$\operatorname{St}_{m,n}\to\operatorname{Gr}(n,m)$ with the frame bundle of the tautological
rank-$n$ subbundle. Consequently, $f$ is \'etale locally isomorphic to
the projection $U_i\times\operatorname{Gr}(n,m)\to U_i$, and $p:V\to X$ is a
principal $G$-bundle. In particular, $f$ is proper, so that $Rf_!=Rf_*$.

The associated vector bundle
\[
  \mathcal S:=V\times^G k^n
\]
on $X$, with the convention $(vg,z)\sim(v,gz)$, restricts on each
geometric fiber of $f$ to the tautological subbundle on $\operatorname{Gr}(n,m)$.
For $\lambda\in\mathcal P_{n,m-n}$, let
\[
  \alpha_\lambda:=s_\lambda(\mathcal S^\vee)
  \in H^{2|\lambda|}_{\mathrm{\acute et}}
       (X,\overline{\bbq_l}(|\lambda|)),
\]
where $s_\lambda(\mathcal S^\vee)$ denotes the Schur polynomial in the
Chern roots of $\mathcal S^\vee$. Equivalently, writing
\[
  c_t(\mathcal S)^{-1}=\sum_{r\geq0}h_rt^r,
  \qquad h_0=1,\qquad h_r=0\ \text{for }r<0,
\]
one has $\alpha_\lambda=\det(h_{\lambda_a-a+b})_{1\leq a,b\leq n}$.
These are globally defined \'etale Chern classes; their construction
does not require $X$ to be smooth.

On a geometric fiber, their restrictions are the Schubert classes,
which form a basis of its \'etale cohomology. Here one uses the
Grassmannian basis theorem and Giambelli formula
\cite[Proposition~14.6.5 and \S14.7]{Fulton1998}, together with the Schubert
affine paving: the cycle classes give all even cohomology, and odd
cohomology vanishes. The convention using
$\det(h_{\lambda_a-a+b})$ is also recorded explicitly in
\cite[Theorem~5.1(1)]{REINEKE200894}.

By adjunction, each $\alpha_\lambda$ determines a morphism
\[
  \overline{\bbq_l}_W(-|\lambda|)[-2|\lambda|]
  \longrightarrow Rf_*\overline{\bbq_l}_X.
\]
Taking their direct sum gives
\[
  \Phi:
  \bigoplus_{\lambda\in\mathcal P_{n,m-n}}
  \overline{\bbq_l}_W(-|\lambda|)[-2|\lambda|]
  \longrightarrow Rf_*\overline{\bbq_l}_X.
\]
For every geometric point $\bar w\to W$, proper base change identifies
the target of $\Phi_{\bar w}$ with
$R\Gamma(X_{\bar w},\overline{\bbq_l})$. Here we apply proper base change first with finite \'etale coefficients
and then pass to $\ell$-adic coefficients. The induced map on
cohomology sends the generators of the source to the Schubert basis
described above. Thus $\Phi$ is an isomorphism on every geometric
stalk, and hence an isomorphism in $D^b_c(W,\overline{\bbq_l})$.

When $k=\mathbb F_q$, the bundle $\mathcal S$ and its Chern classes
are defined over $\mathbb F_q$. The construction of $\Phi$ is therefore
compatible with Frobenius, including the indicated Tate twists.
\end{proof}

\begin{remark}[Required hypotheses]
The proof only requires $f$ to be a proper morphism with geometric
fibers isomorphic to $\operatorname{Gr}(n,m)$, together with a rank-$n$ vector bundle
on $X$ whose restriction to each geometric fiber is the tautological
subbundle under such an isomorphism. The equivariant local
trivializations in Lemma~\ref{lem:grassmannian-pushforward} ensure these
properties. Thus they can be replaced by these two requirements when
that is more convenient in an application.
\end{remark}

For $s\in\tfrac12\mathbb Z$ and $\omega\in\overline{\mathbb Q}_l^{\times}$
with $|\iota_l(\omega)|=1$, let $\mathcal E_{\omega,s}$ be the rank-one
Weil sheaf on $\operatorname{Spec}\mathbb F_q$ on which Frobenius acts
by $\omega q^s$. Thus
$\mathcal E_{\omega,s}=\mathcal E_{\omega,0}(-s)$ in the usual Tate-twist
convention, and it has weight $2s$. In this notation, $\overline{\bbq_l}(s)$ is written as $\mathcal{E}_{1,-s}.$
Here and below, triangulated generation means membership in the full
triangulated subcategory generated by the indicated objects.
\begin{theorem}\label{local}
    For any flag sheaf complex $\mathcal{L},$ any $(c,m)\in \widetilde{\mathcal{G}_{\nu}},$ the restriction $\mathcal{L}|_{Y(c,m)}$ is generated, as a triangulated object, by $IC(X(c,m),L_{\chi})|_{Y(c,m)}\boxtimes \mathcal E_{\omega,s},$ with $|\omega|=1,$ where $\chi\in X(S_m).$
\end{theorem}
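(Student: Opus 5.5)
We will prove Theorem~\ref{local} by computing $\mathcal L|_{Y(c,m)}$ through a stratification of the flag variety $\widetilde{\mathcal F_{\underline\nu}}$ restricted over $Y(c,m)$, refining by the isomorphism types of the subquotients. For $x\in Y(c,m)$ with $M_x\simeq c\oplus R$, a flag $F$ with $x(F)\subset F$ gives a filtration of $M_x$. We sort such filtrations by the ``types'' of the induced filtrations on the four parts $c_P$, $c_{nh}$, $R$, $c_I$. By Proposition~\ref{homext}, a filtration of $M_x$ is controlled by these parts up to affine-space fibrations. Concretely, we will stratify $\pi_{\underline\nu}^{-1}(Y(c,m))$ into finitely many locally closed pieces $W_\sigma$. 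On each piece, the map to $Y(c,m)$ will factor as $W_\sigma\to W'_\sigma\to Y(c,m)$. The first map is an iterated affine bundle, coming from the choice of extension data, with fibres of the form $\Ext$/$\Hom$ spaces of constant dimension. The second map is a composition of Grassmannian bundles, to which Lemma~\ref{lem:grassmannian-pushforward} applies, and of the maps $\pi_{c,\underline\lambda}$ of \eqref{eq1}. By the excision triangles for this stratification, $\mathcal L|_{Y(c,m)}$ lies in the triangulated subcategory generated by the pieces $(\pi|_{W_\sigma})_!\overline{\bbq_l}$ with shifts and Tate twists. Affine bundles contribute $\overline{\bbq_l}(-d)[-2d]$, which is a twist $\mathcal E_{1,d}$. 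Grassmannian bundles contribute sums of such twists by Lemma~\ref{lem:grassmannian-pushforward}.

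The key reduction is to separate the homogeneous part. The non-homogeneous, preprojective and preinjective parts of $M_x$ are constant along $Y(c,m)$, so their flag data behave as over a single orbit. The contribution of these parts to the pushforward should be a constant complex, namely a sum of Tate twists, over $Y(c,m)$. I would make this precise via the fibration over $Y(c,m)$ with fibre the partial flags in $c_P\oplus c_{nh}\oplus c_I$ compatible with the given filtration. This fibration should be étale-locally trivial, and its fibre cohomology is that of a fixed quiver flag variety. Here Frobenius acts on the fibre cohomology by eigenvalues $\omega q^s$; this is the source of the factors $\mathcal E_{\omega,s}$, and we only need $|\omega|=1$ after normalization. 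In fact, we will not need purity of the fibre: any mixed complex on a point lies in the triangulated category generated by rank-one Weil sheaves $\mathcal E_{\omega,s}$. This holds because Frobenius eigenvalues can be triangularized, and each eigenvalue can be written as $\omega q^{s}$ with $|\omega|=1$ and $s\in\tfrac12\bbZ$ by Deligne's theorem on integrality of weights. A subtlety is that eigenvalues may have absolute value $q^{s}$ for non-half-integral real $s$. I would rule this out using Deligne's result that mixed complexes have integral weights.

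The homogeneous part is handled by the maps $\pi_{c,\underline\lambda}$. On a piece where the induced filtration on $R$ has subquotients of dimensions $\lambda_i\delta$ in the homogeneous tubes, the relevant pushforward is $(\pi_{c,\underline\lambda})_!\overline{\bbq_l}$, up to the constant factors above. By \eqref{eq1} this is $\bigoplus_\chi (j^{c,m})^*IC(X(c,m),L_\chi)^{\oplus K_{\underline\lambda,\chi}}$ up to shift and twist. However, a flag on $M_x$ may also cut the homogeneous summand $R$ at dimension vectors that are not multiples of $\delta$, with subquotients that are preprojective or preinjective pieces. The main obstacle is to show that such strata still only produce the generating objects. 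My plan is as follows. By Lemma~\ref{regular} and the defect, any subquotient of $R$ in the filtration has defect zero and is therefore regular. Combined with Proposition~\ref{homext}, the refined flag on $M_x$ then reduces, after an affine-bundle step, to a filtration by regular homogeneous subquotients of dimensions $\lambda_i\delta$ inside $R$. Each of these is further refined by flags inside the $\lambda_i\delta$-blocks. The blocks are vector-space flags inside a fixed module structure over $\Hom(R_z[1],-)$-type data, and we would treat them by iterating Lemma~\ref{lem:grassmannian-pushforward}. If this reduction fails for some stratum, the fallback is induction on $m$ via the factorization of the induction diagram \eqref{eq111}, writing $\mathcal L=\mathcal L'\diamond\mathcal L''$ and restricting termwise to products of smaller strata $Y(c',m')\times Y(c'',m'')$.
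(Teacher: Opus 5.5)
\paragraph{The gap: the homogeneous summand.} There is a genuine gap, and it sits at the step you call the main obstacle.

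\emph{The defect claim is false.} The flags in $\widetilde{\mathcal F_{\underline\nu}}$ have successive quotients of dimension vector $l_ji_j$, i.e.\ semisimple modules supported at one vertex. So the terms and subquotients of the induced flag on $R$ have nonzero defect in general; a simple projective submodule of $R$ already has negative defect. Lemma~\ref{regular} only concerns sub- and quotient modules of defect zero, so your claim that every subquotient of $R$ in the filtration is regular does not hold. Consequently nothing reduces the induced flags on $R$ to the filtrations by homogeneous pieces of dimensions $\lambda_i\delta$ that define $\pi_{c,\underline\lambda}$.

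\emph{Grassmannian bundles cannot substitute.} Iterating Lemma~\ref{lem:grassmannian-pushforward} does not help, because the varieties of composition-type flags of a homogeneous module jump with its isomorphism class. For the Kronecker quiver, one fixed flag type has fibre two points over $R_z[1]\oplus R_w[1]$ ($z\neq w$), one point over $R_z[2]$, and $\mathbb P^1$ over $R_z[1]^{\oplus 2}$. This Springer-type behaviour cannot come from Grassmannian bundles.

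\emph{The remaining piece is the theorem itself.} Grant your affine-bundle reduction along the canonical split filtration $c_I\subset c_I\oplus c_{nh}\subset c_I\oplus c_{nh}\oplus R\subset M_x$, which you only assert. The leftover pushforward of composition-type flags of $R$ to $Y(0,m)$ is then the restriction of a flag sheaf complex on $\bfE_{m\delta}$ to $Y(0,m)$. That is exactly the case $c=0$ of the theorem, and your argument for it is the false defect claim. This case could be obtained from Lusztig's classification of $\mathcal P$ and Proposition~\ref{open}, since $\overline{X(0,m)}$ is the only support in $\bfE_{m\delta}$ meeting $Y(0,m)$. You do not do this, and you would still have to prove the reduction and transport the result from $Y(0,m)$ to $Y(c,m)$.

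\emph{The fallback restates the problem.} Over $x\in Y(c,m)$, the fibre of $p_3$ for $\mathcal L'\diamond\mathcal L''$ is a variety of submodules of $M_x$ whose sub and quotient range over many pairs of strata. The restriction is therefore not a termwise restriction to $Y(c',m')\times Y(c'',m'')$.

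\paragraph{The missing idea: reordering.} The paper never splits $M_x$ into its four parts. It proves, by induction on the dimension of a segment of the flag, the following statement about incidence varieties whose successive pieces lie in prescribed strata $Y(c_s,m_s)$. Forgetting the intermediate steps of a segment whose merged piece lies in $Y(c,m)$ gives a pushforward generated by the standard maps $\pi_{\underline\lambda}$, tensored with factors $\mathcal E_{\omega,s}$. In $\pi_{\underline\lambda}$ the preinjective part is at the bottom, then come the non-homogeneous parts, then homogeneous pieces of dimensions $\lambda_i\delta$, and the preprojective part is on top.

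The basic move exchanges a piece $P^{\oplus m}$ lying below a piece $I^{\oplus n}$.
\begin{itemize}
\item Quotient maps onto $I^{\oplus n}$ are parametrised by homomorphisms $M\to I^{\oplus n}$ of mapping rank $n$ modulo $\GL_n=\Aut(I^{\oplus n})$.
\item These form a $\operatorname{Gr}(n,s_{c,m})$-bundle, because $s_{c,m}=\dim\Hom(M_x,I)$ is constant on $Y(c,m)$ (this uses $\Ext^1(\mathbf R,\mathbf I)=0$). Lemma~\ref{lem:grassmannian-pushforward} then contributes only Tate twists.
\item The non-surjective maps are removed inductively, through kernels lying in strata with no preinjective summand $>I$.
\end{itemize}
Pieces in different tubes commute by orthogonality. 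Orbit pieces contribute complexes on a point, which is where your triangularisation of Frobenius is the right tool. Finiteness of the relevant preinjectives and preprojectives makes the process terminate. The homogeneous pieces of dimension $\lambda_i\delta$ arise from merged pieces $Y(c',m')$ refined by their own canonical filtrations, and only at the end is \eqref{eq1} applied.
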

\begin{proof}
We claim that for any dimension vector $\tilde{\nu}$, any fixed flag type $(\nu_1,\cdots,\nu_d)$, and any $i<j,$ if $\sum_{s=i+1}^j\nu_s\leq \tilde{\nu},$ then 
\[
\begin{gathered}
\left\{
(x,F)\ \middle|\
\begin{gathered}
x\in \bfE_\nu,\quad
F=(0=V_0\subset V_1\subset\cdots\subset V_d=\bfV),x(F)\subset F,\,\ddim(V_s/V_{s-1})=\nu_s\\
x|_{V_j/V_i}\in Y(c,m),\quad
x|_{V_s/V_{s-1}}\in Y(c_s,m_s),\quad 1\leq s\leq d
\end{gathered}
\right\}
\\
\xrightarrow{\pi}
\left\{
(x,F)\ \middle|\
\begin{gathered}
x\in \bfE_\nu,\quad
F=(0=V_0\subset V_1\subset\cdots\subset V_i\subset V_j\subset V_d=\bfV),x(F)\subset F,\\\ddim(V_s/V_{s-1})=\nu_s,1\leq s\leq i,\,j+1\leq s\leq d,\\
x|_{V_j/V_i}\in Y(c,m),\,x|_{V_s/V_{s-1}}\in Y(c_s,m_s),\quad 1\leq s\leq i,\,j+1\leq s\leq d,
\end{gathered}
\right\},
\end{gathered}
\]
 $\pi_!\overline{\mathbb Q}_l$ is generated by
${\pi_{\underline{\lambda}!}}\overline{\mathbb Q}_l\boxtimes \mathcal E_{\omega,s},$ for suitable $(\omega,s),$ and $\underline{\lambda}=(\lambda_1,\cdots,\lambda_t),$ where $\pi_{\underline{\lambda}}$ is the morphism
\[
\begin{gathered}
\left\{
(x,F)\ \middle|\
\begin{gathered}
x\in \bfE_\nu,\\
F=(0=V_0\subset\cdots\subset V_i\subset V'
=V_{2,0}\subset V_{2,1}\subset\cdots
\subset V_{2,l}=V_{3,0}\subset V_{3,1}
\subset\cdots\subset V_{3,t}\subset V_j\subset\cdots V_d=\bfV),\\
\ddim(V_s/V_{s-1})=\nu_s,\,x(F)\subset F,\,x|_{V'/V_i}\simeq M_{c_I},\quad
x|_{V_{2,s}/V_{2,s-1}}\simeq M_{c_{nh,s}},\\
x|_{V_{3,s}/V_{3,s-1}}
\text{ is homogeneous regular of dimension }\lambda_s\delta,\\
x|_{V_j/V_{3,t}}\simeq M_{c_P}\\
x|_{V_s/V_{s-1}}\in Y(c_s,m_s),\quad 1\leq s\leq i,\,j+1\leq s\leq d,
\end{gathered}
\right\}
\\
\longrightarrow
\left\{
(x,F)\ \middle|\
\begin{gathered}
x\in \bfE_\nu,\quad
F=(0=V_0\subset V_1\subset\cdots\subset V_d=\bfV),x(F)\subset F,\,\ddim(V_s/V_{s-1})=\nu_s\\
x|_{V_j/V_i}\in Y(c,m),\,x|_{V_s/V_{s-1}}\in Y(c_s,m_s),\quad 1\leq s\leq i,\,j+1\leq s\leq d,
\end{gathered}
\right\}.
\end{gathered}
\]
To prove the claim, we will using the induction on $\tilde{\nu}.$ It is clearly that when $\tilde{\nu}=0,$ the proof is trival.

To prove for general $\tilde{\nu},$ first, we consider the following case.

Let $P$ be an indecomposable
preprojective module and $I$ an indecomposable preinjective module, and the dimension of the representation vector space of $P^{\oplus m}\oplus I^{\oplus n}$ is $\tilde{\nu}.$
\[
\begin{aligned}
&
\left\{
(x,F)\ \middle|\
\begin{gathered}
x\in \bfE_\nu,\quad
F=(0=V_0\subset\cdots\subset V_i\subset V_{i+1}\subset V_{i+2}\subset\cdots V_d=\bfV),\ddim(V_s/V_{s-1})=\nu_s\\
x(F)\subset F,\,x|_{V_{i+1}/V_i}\simeq P^{\oplus m},\quad
x|_{V_{i+2}/V_{i+1}}\simeq I^{\oplus n},\\
x|_{V_s/V_{s-1}}\in Y(c_s,m_s),\quad 1\leq s\leq i,\,i+3\leq s\leq d.
\end{gathered}
\right\}
\\
={}&
\left\{
(x,F)\ \middle|\
\begin{gathered}
x\in \bfE_\nu,\quad
F=(0=V_0\subset\cdots\subset V_i\subset V_{i+1}\subset V_{i+2}\subset\cdots V_d=\bfV),\ddim(V_s/V_{s-1})=\nu_s\\
x(F)\subset F,\,x|_{V_{i+2}/V_{i+1}}\simeq I^{\oplus n}\\
x|_{V_s/V_{s-1}}\in Y(c_s,m_s),\quad 1\leq s\leq i,\,i+3\leq s\leq d.
\end{gathered}
\right\}
\\
&-
\bigsqcup_{Y(c',m')\not=Y(P^{\oplus m},0)}
\left\{
(x\in \bfE_\nu,F)\ \middle|\
\begin{gathered}
F=(0=V_0\subset\cdots\subset V_i\subset V_{i+1}\subset V_{i+2}\subset\cdots V_d=\bfV),\ddim(V_s/V_{s-1})=\nu_s\\
x(F)\subset F,\,x|_{V_{i+1}/V_i}\in Y(c',m'),\\
x|_{V_{i+2}/V_{i+1}}\simeq I^{\oplus n}\\
x|_{V_s/V_{s-1}}\in Y(c_s,m_s),\quad 1\leq s\leq i,\,i+3\leq s\leq d.
\end{gathered}
\right\}.
\end{aligned}
\]

Moreover, for any $M$ fitting into the exact sequence
\[
0\longrightarrow P^{\oplus m}
\longrightarrow M
\longrightarrow I^{\oplus n}
\longrightarrow 0,
\]
$M$ has no preinjective direct summand $>I$, and $M$ cannot be of the form $I^{\oplus l}$.

Thus, we only need to consider
\[
\left\{
(x,F)\ \middle|\
\begin{gathered}
x\in \bfE_\nu,\quad
F=(0=V_0\subset\cdots\subset V_i\subset V_{i+1}\subset V_{i+2}\subset\cdots V_d=\bfV),\ddim(V_s/V_{s-1})=\nu_s\\x(F)\subset F,\,\\
x|_{V_{i+2}/V_i}\in Y(c_r,m_r),\quad
x|_{V_{i+2}/V_{i+1}}\simeq I^{\oplus n}\\
x|_{V_s/V_{s-1}}\in Y(c_s,m_s),\quad 1\leq s\leq i,\,i+3\leq s\leq d.
\end{gathered}
\right\}
\] and
\begin{equation}\label{shi}
    \left\{
(x\in \bfE_\nu,F)\ \middle|\
\begin{gathered}
F=(0=V_0\subset\cdots\subset V_i\subset V_{i+1}\subset V_{i+2}\subset\cdots V_d=\bfV),\ddim(V_s/V_{s-1})=\nu_s\\x(F)\subset F,\,\\
x|_{V_{i+2}/V_i}\in Y(c_r,m_r),\quad
x|_{V_{i+1}/V_i}\in Y(c_r',m_r'),\\
x|_{V_{i+2}/V_{i+1}}\simeq I^{\oplus n}\\
x|_{V_s/V_{s-1}}\in Y(c_s,m_s),\quad 1\leq s\leq i,\,i+3\leq s\leq d.
\end{gathered}
\right\},
\end{equation}

where neither $c_{i,I}$ nor $c_{i,I}'$ has a preinjective summand $>I$. Moreover, for each $(c,m)\in \tilde{\cG_{\nu}},$ we have the following: $\forall x\in Y(c,m),$ the dimension $\dim\Hom_{kQ}(M_x,I)$ is independent of the choice of $x\in Y(c,m),$ and we denote this constant by $s_{c,m}.$ Here the mapping rank of an element $f\in \Hom_{kQ}(M_x,I^{\oplus n})$ is the dimension of the vector space spanned by $\{f_1,\cdots,f_n\},$ where $f=(f_1,\cdots,f_n),$ $f_s\in \Hom_{kQ}(M_x,I),$ $s=1,\cdots,n.$ And there is a natural action $\Aut_{kQ}(I^{\oplus n})$ acting on $\Hom_{kQ}(M_x,I^{\oplus n}).$

\[
\begin{aligned}
&
\left\{
(x,F)\ \middle|\
\begin{gathered}
x\in \bfE_\nu,\quad
F=(0=V_0\subset\cdots\subset V_i\subset V_{i+1}\subset V_{i+2}\subset\cdots V_d=\bfV),\ddim(V_s/V_{s-1})=\nu_s\\x(F)\subset F,\,\\
x|_{V_{i+2}/V_i}\in Y(c_r,m_r),\quad
x|_{V_{i+2}/V_{i+1}}\simeq I^{\oplus n}\\
x|_{V_s/V_{s-1}}\in Y(c_s,m_s),\quad 1\leq s\leq i,\,i+3\leq s\leq d.
\end{gathered}
\right\}
\\
={}&
\left\{
(x,F,f)\ \middle|\
\begin{gathered}
x\in \bfE_\nu,\quad
F=(0=V_0\subset\cdots\subset V_i\subset V_{i+2}\subset\cdots V_d=\bfV),\\1\leq s\leq i,\,i+3\leq s\leq d,\ddim(V_s/V_{s-1})=\nu_s,\,x(F)\subset F,\,\\
x|_{V_{i+2}/V_i}\in Y(c_r,m_r),\\
f\in\operatorname{Hom}(M_{x|_{V_{i+2}/V_i}},I^{\oplus n})
\text{ is surjective}\\
x|_{V_s/V_{s-1}}\in Y(c_s,m_s),\quad 1\leq s\leq i,\,i+3\leq s\leq d.
\end{gathered}
\right\}/(\mathrm{GL}_n=\Aut_{kQ}(I^{\oplus n}))
\\
={}&
\left\{
(x,F,f)\ \middle|\
\begin{gathered}
x\in \bfE_\nu,\quad
F=(0=V_0\subset\cdots\subset V_i\subset V_{i+2}\subset\cdots V_d=\bfV),\\1\leq s\leq i,\,i+3\leq s\leq d,\ddim(V_s/V_{s-1})=\nu_s,\,x(F)\subset F,\,\\
x|_{V_{i+2}/V_i}\in Y(c_r,m_r),\\
f\in\operatorname{Hom}(M_{x|_{V_{i+2}/V_i}},I^{\oplus n})
\text{ has mapping rank }n\\
x|_{V_s/V_{s-1}}\in Y(c_s,m_s),\quad 1\leq s\leq i,\,i+3\leq s\leq d.
\end{gathered}
\right\}/\mathrm{GL}_n
\\
&-
\left\{
(x,F,f)\ \middle|\
\begin{gathered}
x\in \bfE_\nu,\quad
F=(0=V_0\subset\cdots\subset V_i\subset V_{i,1}\subset V_{i+2}\subset\cdots V_d=\bfV),\\1\leq s\leq i,\,i+3\leq s\leq d,\ddim(V_s/V_{s-1})=\nu_s,\,x(F)\subset F,\,\\
x|_{V_{i+2}/V_i}\in Y(c_r,m_r),\\
f\in\operatorname{Hom}(M_{x|_{V_{i+2}/V_i}},I^{\oplus n})
\text{ has mapping rank }n,\\
\ker f=V_{i,1},\quad
\ddim(V_{i+2}/V_{i,1})<\ddim(I^{\oplus n})\\
x|_{V_s/V_{s-1}}\in Y(c_s,m_s),\quad 1\leq s\leq i,\,i+3\leq s\leq d.
\end{gathered}
\right\}/\mathrm{GL}_n.
\end{aligned}
\]

Fixing the dimension of $\ker f$, we obtain
\[
\bigsqcup_{Y(c_r^1,m_r^1)}
\left\{
(x,F,f)\ \middle|\
\begin{gathered}
x\in \bfE_\nu,\quad
F=(0=V_0\subset\cdots\subset V_i\subset V_{i,1}\subset V_{i+2}\subset\cdots V_d=\bfV),\\1\leq s\leq i,\,i+3\leq s\leq d,\ddim(V_s/V_{s-1})=\nu_s,\,x(F)\subset F,\,\\
f\in\operatorname{Hom}(M_{x|_{V_{i+2}/V_i}},I^{\oplus n})
\text{ has mapping rank }n,\\
\ker f=V_{i,1},\quad
x|_{V_{i+2}/V_i}\in Y(c_r,m_r),\quad
x|_{V_{i+2}/V_{i,1}}\in Y(c_r^1,m_r^1)\\
x|_{V_s/V_{s-1}}\in Y(c_s,m_s),\quad 1\leq s\leq i,\,i+3\leq s\leq d.
\end{gathered}
\right\}/\mathrm{GL}_n,
\]
where $c_{r}^1$ has no preinjective direct summand $>I$,
since $M_{x|_{V_{i+2}/V_{i,1}}}$ is a submodule of
$I^{\oplus n}$.

Thus, we only need to consider
\[
\left\{
(x,F,f)\ \middle|\
\begin{gathered}
x\in \bfE_\nu,\quad
F=(0=V_0\subset\cdots\subset V_i\subset V_{i,1}\subset V_{i+2}\subset\cdots V_d=\bfV),\\1\leq s\leq i,\,i+3\leq s\leq d,\ddim(V_s/V_{s-1})=\nu_s,\,x(F)\subset F,\,\\
f\in\operatorname{Hom}(M_{x|_{V_{i+2}/V_{i,1}}},I^{\oplus n})
\text{ has mapping rank }n\text{ and is injective},\\
x|_{V_{i+2}/V_i}\in Y(c_r,m_r),\quad
x|_{V_{i+2}/V_{i,1}}\in Y(c_r^1,m_r^1)\\
x|_{V_s/V_{s-1}}\in Y(c_s,m_s),\quad 1\leq s\leq i,\,i+3\leq s\leq d.
\end{gathered}
\right\}/\mathrm{GL}_n,
\]
which is isomorphic to
\[
\begin{aligned}
&
\left\{
(x,F,f)\ \middle|\
\begin{gathered}
x\in \bfE_\nu,\quad
F=(0=V_0\subset\cdots\subset V_i\subset V_{i,1}\subset V_{i+2}\subset\cdots V_d=\bfV),\\1\leq s\leq i,\,i+3\leq s\leq d,\ddim(V_s/V_{s-1})=\nu_s,\,x(F)\subset F,\,\\
f\in\operatorname{Hom}(M_{x|_{V_{i+2}/V_{i,1}}},I^{\oplus n})
\text{ has mapping rank }n,\\
x|_{V_{i+2}/V_i}\in Y(c_r,m_r),\quad
x|_{V_{i+2}/V_{i,1}}\in Y(c_r^1,m_r^1)\\
x|_{V_s/V_{s-1}}\in Y(c_s,m_s),\quad 1\leq s\leq i,\,i+3\leq s\leq d.
\end{gathered}
\right\}/\mathrm{GL}_n
\\
&-
\bigsqcup_{Y(c_r^2,m_r^2)}
\left\{
(x,F,f)\ \middle|\
\begin{gathered}
x\in \bfE_\nu,\quad
F=(0=V_0\subset\cdots\subset V_i\subset V_{i,1}\subset V_{i,2}\subset V_{i+2}\subset\cdots V_d=\bfV),\\1\leq s\leq i,\,i+3\leq s\leq d,\ddim(V_s/V_{s-1})=\nu_s,\,x(F)\subset F,\,\\
f\in\operatorname{Hom}(M_{x|_{V_{i+2}/V_{i,2}}},I^{\oplus n})
\text{ is injective and has mapping rank }n,\\
x|_{V_{i+2}/V_i}\in Y(c_r,m_r),\quad
x|_{V_{i+2}/V_{i,1}}\in Y(c_r^1,m_r^1),\\
x|_{V_{i+2}/V_{i,2}}\in Y(c_r^2,m_r^2)\\
x|_{V_s/V_{s-1}}\in Y(c_s,m_s),\quad 1\leq s\leq i,\,i+3\leq s\leq d.
\end{gathered}
\right\}/\mathrm{GL}_n.
\end{aligned}
\]

Again, $c_{r}^2$ has no preinjective direct summand $>I$.

By induction, we only need to consider
\[
\left\{
(x,F,f)\ \middle|\
\begin{gathered}
x\in \bfE_\nu,\quad
F=(0=V_0\subset\cdots\subset V_i\subset V_{i,1}\subset V_{i,2}\subset\cdots V_{i,t}\subset V_{i+2}\subset\cdots V_d=\bfV),\\1\leq s\leq i,\,i+3\leq s\leq d,\ddim(V_s/V_{s-1})=\nu_s,\,x(F)\subset F,\,\\
x|_{V_{i+2}/V_i}\in Y(c_r,m_r),\quad
x|_{V_{i+2}/V_{i,1}}\in Y(c_r^1,m_r^1),\quad
\ldots,\quad
x|_{V_{i+2}/V_{i,t}}\in Y(c_r^t,m_r^t),\\
f\in\operatorname{Hom}(M_{x|_{V_{i+2}/V_{i,t}}},I^{\oplus n})
\text{ has mapping rank }n\\
x|_{V_s/V_{s-1}}\in Y(c_s,m_s),\quad 1\leq s\leq i,\,i+3\leq s\leq d.
\end{gathered}
\right\}/\mathrm{GL}_n
\xrightarrow{\pi}\bfE_\nu,
\]
where the representations in $Y(c_r^u,m_r^u)$ have no preinjective direct summand $>I$.
Moreover, $Y(c_r^u,m_r^u)$ is not of type $Y(c_{I^{\oplus k}},0)$. Indeed, $f$ has mapping rank n, and the mapping rank associated with $Y(c_{I^{\oplus k}},0)$ can not be larger than $k<n$ when $u\geq 1.$

By the lemma \ref{lem:grassmannian-pushforward}, for the morphism
\[
\begin{gathered}
\left\{
(x,F,f)\ \middle|\
\begin{gathered}
x\in \bfE_\nu,\quad
F=(0=V_0\subset\cdots\subset V_i\subset V_{i,1}\subset V_{i,2}\subset\cdots V_{i,t}\subset V_{i+2}\subset\cdots V_d=\bfV),\\1\leq s\leq i,\,i+3\leq s\leq d,\ddim(V_s/V_{s-1})=\nu_s,\,x(F)\subset F,\,\\
x|_{V_{i+2}/V_i}\in Y(c_r,m_r),\quad
x|_{V_{i+2}/V_{i,1}}\in Y(c_r^1,m_r^1),\quad
\ldots,\quad
x|_{V_{i+2}/V_{i,t}}\in Y(c_r^t,m_r^t),\\
f\in\operatorname{Hom}(M_{x|_{V_{i+2}/V_{i,t}}},I^{\oplus n})
\text{ has mapping rank }n\\
x|_{V_s/V_{s-1}}\in Y(c_s,m_s),\quad 1\leq s\leq i,\,i+3\leq s\leq d.
\end{gathered}
\right\}/\mathrm{GL}_n
\\
\xrightarrow{\phi}
\left\{
(x,F)\ \middle|\
\begin{gathered}
x\in \bfE_\nu,\quad
F=(0=V_0\subset\cdots\subset V_i\subset V_{i,1}\subset V_{i,2}\subset\cdots V_{i,t}\subset V_{i+2}\subset\cdots V_d=\bfV),\\1\leq s\leq i,\,i+3\leq s\leq d,\ddim(V_s/V_{s-1})=\nu_s,\,x(F)\subset F,\,\\
x|_{V_{i+2}/V_i}\in Y(c_r,m_r),\quad
x|_{V_{i+2}/V_{i,1}}\in Y(c_r^1,m_r^1),\quad
\ldots,\quad
x|_{V_{i+2}/V_{i,t}}\in Y(c_r^t,m_r^t),\\
x|_{V_s/V_{s-1}}\in Y(c_s,m_s),\quad 1\leq s\leq i,\,i+3\leq s\leq d.
\end{gathered}
\right\}
\end{gathered}
\]
we have
\[
\phi_!\overline{\mathbb Q}_l
=
\bigoplus_{\text{for some }k}
\overline{\mathbb Q}_l[2k](k).
\]

Thus, we only need to consider
\[
\left\{
(x,F)\ \middle|\
\begin{gathered}
x\in \bfE_\nu,\quad
F=(0=V_0\subset\cdots\subset V_i\subset V_{i,1}\subset V_{i,2}\subset\cdots V_{i,t}\subset V_{i+2}\subset\cdots V_d=\bfV),\\1\leq s\leq i,\,i+3\leq s\leq d,\ddim(V_s/V_{s-1})=\nu_s,\,x(F)\subset F,\,\\
x|_{V_{i+2}/V_i}\in Y(c_r,m_r),\quad
x|_{V_{i+2}/V_{i,1}}\in Y(c_r^1,m_r^1),\quad
\ldots,\quad
x|_{V_{i+2}/V_{i,t}}\in Y(c_r^t,m_r^t),\\
x|_{V_s/V_{s-1}}\in Y(c_s,m_s),\quad 1\leq s\leq i,\,i+3\leq s\leq d.
\end{gathered}
\right\}.
\]
Actually, in this case, any $M_{x|_{V_{i,l}/V_{i,l-1}}}$ including $M_{x|_{V_{i,1}/V_i}}$ has no preinjective indecomposible direct summand $>I,$ since $M_{x|_{V_{i,l}/V_{i,l-1}}}$ is a submodule of $M_{x|_{V_{i+2}/V_{i,l-1}}}.$

When ${x}|_{V_{i+1}/V_i}\in Y(0,m),$ the same argument applies. In the other cases, only finitely many modules, up to isomorphism, can occur as extensions of $M_{{x}|_{V_{i+1}/V_i}}$ and $I^{\oplus n}$. Moreover, for any $\bfG_{\nu}-$orbit $\mo\subset \bfE_{\nu},$ and any $\mf\in\cD^b_{m,\bfG_{\nu}}(\mo),$ the sheaf $\bfH^i(\mf)$ is generated by sheaves of the form $\mathcal E_{\omega,s},$ for some $\omega,s.$ And if $M_{{x}|_{V_{i+1}/V_i}}<I$
 then the extension of them has no preinjective direct summands $>I.$
All decompositions used here come from filtrations by closed subvarieties. These filtrations are induced by those associated with $\bigcup Y(c,m).$

By induction on the dimension $\tilde{\nu}$, we only need to consider
\[
\left\{
(x,F)\ \middle|\
\begin{gathered}
x\in \bfE_\nu,\\
F=(0=V_0\subset\cdots\subset V_i\subset V_{i,1}\subset V_{i,1}^I\subset V_{i,1}^{nh}\subset V_{i,1}^{hr,1}\subset \cdots V_{i,1}^{hr,t-1}\subset V_{i,1}^P\subset V_{i+2}\subset\cdots V_d=\bfV),\\1\leq s\leq i,\,i+3\leq s\leq d,\ddim(V_s/V_{s-1})=\nu_s,\,x(F)\subset F,\,\\
x|_{V_{i+2}/V_i}\in Y(c_r,m_r),\quad
x|_{V_{i+2}/V_{i,1}^P}\in Y((c_r^1)_P,0),\,x|_{V_{i,1}^I/V_{i,1}}\in Y((c_r^1)_I,0),
\\ \text{denote } V_{i,1}^{hr,0}=V_{i,1}^{nh},\,  V_{i,1}^{hr,t}=V_{i,1}^P
\\x|_{V_{i,1}^{nh}/V_{i,1}^I}\in Y((c_r^1)_{nh},0),\,x|_{V_{i,1}^{hr,l}/V_{i,1}^{hr,l-1}}\in Y(0,\lambda_l), l=1,\cdots,t,\\
x|_{V_s/V_{s-1}}\in Y(c_s,m_s),\quad 1\leq s\leq i,\,i+3\leq s\leq d.
\end{gathered}
\right\},
\] since the case in equation \ref{shi}
\[
\begin{gathered}
    \left\{
(x,F)\ \middle|\
\begin{gathered}
x\in \bfE_\nu,\\
F=(0=V_0\subset\cdots\subset V_i\subset V_{i+1}\subset V_{i+2}\subset\cdots V_d=\bfV),\ddim(V_s/V_{s-1})=\nu_s,\,x(F)\subset F,\,\\
x|_{V_{i+1}/V_i}\in Y(c_r',m_r'),\\
x|_{V_{i+2}/V_{i+1}}\simeq I^{\oplus n}.\,x|_{V_{i+2}/V_i}\in Y(c_r,m_r),\\
x|_{V_s/V_{s-1}}\in Y(c_s,m_s),\quad 1\leq s\leq i,\,i+3\leq s\leq d.
\end{gathered}
\right\}\\\cong\\\left\{
(x,F)\ \middle|\
\begin{gathered}
x\in \bfE_\nu,\\
F=(0=V_0\subset\cdots\subset V_i\subset V_i^I\subset V_i^{nh}\subset V_i^{hr} \subset V_{i+1}\subset V_{i+2}\subset\cdots V_d=\bfV),\\\ddim(V_s/V_{s-1})=\nu_s,\,x(F)\subset F,\,\\
x|_{V_{i+1}/V_i^{hr}}\in Y((c_r')_P,0), x|_{V_i^{hr}/V_i^{nh}}\in Y(0,m_r'),/,x|_{V_i^{nh}/V_i^{I}}\in Y((c_r')_{nh},0),\,x|_{V_i^{I}/V_i}\in Y((c_r')_{I},0)\\
x|_{V_{i+2}/V_{i+1}}\simeq I^{\oplus n}.\,x|_{V_{i+2}/V_i}\in Y(c_r,m_r),\\
x|_{V_s/V_{s-1}}\in Y(c_s,m_s),\quad 1\leq s\leq i,\,i+3\leq s\leq d.
\end{gathered}
\right\}
\end{gathered}
\] can also be reduced to the form like above when applying the claim.

Neither $c_r^1$ nor $c_r'$ has a preinjective direct
summand $>I$, and neither $Y(c_r^1,m_r^1)$ nor $Y(c_r',m_r')$ is of type
$Y(c_{I^{\oplus k}},0)$. After finitely many steps, the rightmost term is therefore replaced by a representation strictly smaller than $I,$ while no injective representation strictly larger than $I$ occurs in the process. There are only finitely many nonisomorphic indecomposable preinjective modules $I'<I,$ with dimension $\leq \tilde{\nu}.$ Thus, after finitely many such steps, all preinjective parts are
on the left.

Moreover, \[
\left\{
(x,F)\ \middle|\
\begin{gathered}
x\in \bfE_\nu,\quad
F=(0=V_0\subset\cdots\subset V_i\subset V_{i+1}\subset V_{i+2}\subset\cdots V_d=\bfV),\\1\leq s\leq i,\,i+3\leq s\leq d,\ddim(V_s/V_{s-1})=\nu_s,\,x(F)\subset F,\,\\
x|_{V_{i+1}/V_i}\in Y(c_{nh,a},0),\quad
x|_{V_{i+2}/V_{i+1}}\in Y(0,m)\\
x|_{V_s/V_{s-1}}\in Y(c_s,m_s),\quad 1\leq s\leq i,\,i+3\leq s\leq d.
\end{gathered}
\right\}
\] is isomorphic to \[
\left\{
(x,F)\ \middle|\
\begin{gathered}
x\in \bfE_\nu,\quad
F=(0=V_0\subset\cdots\subset V_i\subset V_{i+1}\subset V_{i+2}\subset\cdots V_d=\bfV),\\1\leq s\leq i,\,i+3\leq s\leq d,\ddim(V_s/V_{s-1})=\nu_s,\,x(F)\subset F,\,\\
x|_{V_{i+1}/V_i}\in Y(0,m),\quad
x|_{V_{i+2}/V_{i+1}}\in Y(c_{nh,a},0)\\
x|_{V_s/V_{s-1}}\in Y(c_s,m_s),\quad 1\leq s\leq i,\,i+3\leq s\leq d.
\end{gathered}
\right\}
\] and for $i\not=j,$ \[
\left\{
(x,F)\ \middle|\
\begin{gathered}
x\in \bfE_\nu,\quad
F=(0=V_0\subset\cdots\subset V_i\subset V_{i+1}\subset V_{i+2}\subset\cdots V_d=\bfV),\\1\leq s\leq i,\,i+3\leq s\leq d,\ddim(V_s/V_{s-1})=\nu_s,\,x(F)\subset F,\,\\
x|_{V_{i+1}/V_i}\in Y(c_{nh,u},0),\quad
x|_{V_{i+2}/V_{i+1}}\in Y(c_{nh,r},0)\\
x|_{V_s/V_{s-1}}\in Y(c_s,m_s),\quad 1\leq s\leq i,\,i+3\leq s\leq d.
\end{gathered}
\right\}
\] is isomorphic to \[
\left\{
(x,F)\ \middle|\
\begin{gathered}
x\in \bfE_\nu,\quad
F=(0=V_0\subset\cdots\subset V_i\subset V_{i+1}\subset V_{i+2}\subset\cdots V_d=\bfV),\\1\leq s\leq i,\,i+3\leq s\leq d,\ddim(V_s/V_{s-1})=\nu_s,\,x(F)\subset F,\,\\
x|_{V_{i+1}/V_i}\in Y(c_{nh,r},0),\quad
x|_{V_{i+2}/V_{i+1}}\in Y(c_{nh,u},0)\\
x|_{V_s/V_{s-1}}\in Y(c_s,m_s),\quad 1\leq s\leq i,\,i+3\leq s\leq d.
\end{gathered}
\right\}.
\]
The argument for the preprojective part is analogous. 

Hence after finite step, we only need to consider the case, for some $(\lambda_1,\cdots,\lambda_t),$ $\sum_{s=1}^t \lambda_s=m_r,$ \[
\begin{gathered}
\left\{
(x,F)\ \middle|\
\begin{gathered}
x\in \bfE_\nu,\\
F=(0=V_0\subset\cdots\subset V_i\subset V'
=V_{2,0}\subset V_{2,1}\subset\cdots
\subset V_{2,l}=V_{3,0}\subset V_{3,1}
\subset\cdots\subset V_{3,t}\subset V_{i+2}\subset\cdots V_d=\bfV),\\
\ddim(V_s/V_{s-1})=\nu_s,\,x(F)\subset F,\,x|_{V'/V_i}\simeq M_{{c_r}_I},\quad
x|_{V_{2,s}/V_{2,s-1}}\simeq M_{{c_r}_{nh,s}},\\
x|_{V_{3,s}/V_{3,s-1}}
\text{ is homogeneous regular of dimension }\lambda_s\delta,\\
x|_{V_j/V_{3,t}}\simeq M_{{c_r}_P}\\
x|_{V_s/V_{s-1}}\in Y(c_s,m_s),\quad 1\leq s\leq i,\,j+1\leq s\leq d,
\end{gathered}
\right\}\end{gathered},\] which is what we need.

And for general case 
\[
\left\{
(x,F)\ \middle|\
\begin{gathered}
x\in \bfE_\nu,\quad
F=(0=V_0\subset\cdots\subset V_i\subset V_{i+1}\subset V_{i+2}\subset\cdots V_d=\bfV),\\1\leq s\leq i,\,i+3\leq s\leq d,\ddim(V_s/V_{s-1})=\nu_s,\,x(F)\subset F,\,\\
x|_{V_{i+2}/V_i}\in Y(c,m),\quad x|_{V_{i+1}/V_i}\in Y(c',m'),\quad
x|_{V_{i+2}/V_{i+1}}\in Y(c'',m'')\\
x|_{V_s/V_{s-1}}\in Y(c_s,m_s),\quad 1\leq s\leq i,\,i+3\leq s\leq d.
\end{gathered}
\right\},
\] which is isomorphic to \[
\left\{
(x,F)\ \middle|\
\begin{gathered}
x\in \bfE_\nu,\quad
F=(0=V_0\subset\cdots\subset V_i\subset V_i^I \subset V_i^{nh}\subset V_i^{hr}\subset V_{i+1}\subset V_{i+1}^I \subset V_{i+1}^{nh}\subset V_{i+1}^{hr}\subset V_{i+2}\subset\cdots V_d=\bfV),\\1\leq s\leq i,\,i+3\leq s\leq d,\ddim(V_s/V_{s-1})=\nu_s,\,x(F)\subset F,\,\\
x|_{V_{i+2}/V_i}\in Y(c,m),\\
x|_{V_{i+1}/V_i^{hr}}\in Y(c'_P,0),\quad x|_{V_i^{hr}/V_i^{nh}}\in Y(0,m'),\quad x|_{V_i^{nh}/V_i^{I}}\in Y(c'_{nh},0),\quad x|_{V_i^{I}/V_i}\in Y(c'_{I},0),\\
x|_{V_{i+2}/V_{i+1}^{hr}}\in Y(c''_P,0),\quad x|_{V_{i+1}^{hr}/V_{i+1}^{nh}}\in Y(0,m''),\quad x|_{V_{i+1}^{nh}/V_{i+1}^{I}}\in Y(c''_{nh},0),\quad x|_{V_{i+1}^{I}/V_{i+1}}\in Y(c''_{I},0),\\
x|_{V_s/V_{s-1}}\in Y(c_s,m_s),\quad 1\leq s\leq i,\,i+3\leq s\leq d.
\end{gathered}
\right\}.
\]
Then by repeatly applying the claim for lower $\tilde{\nu},$ it will reduced to the case we consider above or the conclusion we need. The proof is similar as before, by considering the order of preprojective and preinjective modules.

The claim is proved.

Together with Equation \ref{eq1}, this completes the proof.
\end{proof}
\begin{remark}\label{rem1}
    Theorem \ref{local} applies in more general settings. For example, it can be used to compute $q_!\overline{\bbQ_l},$ where $$\left\{
(x,F)\ \middle|\
\begin{gathered}
x\in \bfE_\nu,\quad
F=(0= V_1\subset\cdots\subset V_n=\bfV),x(F)\subset F,\,\\
x|_{V_n/V_1}\in Y(c,m),\quad
x|_{V_i/V_{i-1}}\in Y(c_i,m_i),\quad 2\leq i\leq n
\end{gathered}
\right\}\xrightarrow{q} Y(c,m).$$
\end{remark}
\begin{remark}\label{lisse}
    By Theorem \ref{local}, for any flag sheaf complex $\mathcal{L},$ the restriction $\mathcal{L}|_{X(c,m)}$ is generated, as a triangulated object, by the sheaves $L_{\chi}$, where $\chi\in X(S_m).$ Thus $\mathbf{H}^i(\mathcal{L}|_{X(c,m)})$ is lisse for every $i$.
\end{remark}
\section{The transition matrix between the canonical basis and the PBW basis}
For any flag sheaf complex $\mathcal{L},$ any $(c,m)\in \widetilde{\mathcal{G}_{\nu}}$ need not to be aperiodic, we establish the following purity statement: $\forall x\in X(c,m),$ the stalk $\mathcal{L}|_x$ is pure.
\begin{theorem}\label{pure}
    For any flag sheaf complex $\mathcal{L},$ any $(c,m)\in \widetilde{\mathcal{G}_{\nu}},$ the restriction $\mathcal{L}|_{X(c,m)}$ is pointwise pure of weight $0.$
\end{theorem}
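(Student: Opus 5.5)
We reduce to a statement about fibres of the flag morphism over a single point. Since $\mathcal L=\pi_{\underline\nu !}\overline{\bbQ_l}[f(\underline\nu)](f(\underline\nu)/2)$ with $\pi_{\underline\nu}$ proper, proper base change identifies the stalk at an $\bbF_q$-rational point $x\in X(c,m)$ with $R\Gamma(\pi_{\underline\nu}^{-1}(x),\overline{\bbQ_l})[f(\underline\nu)](f(\underline\nu)/2)$. The shift and twist are normalized so that weight $i$ on $H^i$ of the fibre becomes weight $0$ in the normalized complex. So it suffices to show that $H^i(\pi_{\underline\nu}^{-1}(x))$ is pure of weight $i$ for every $i$; this is the quiver flag variety of $M_x$ of type $\underline\nu$. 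By Remark~\ref{lisse} the cohomology sheaves of $\mathcal L|_{X(c,m)}$ are lisse. Hence, after a finite extension of $\bbF_q$, it is enough to check purity at one convenient rational point of each connected component. We may therefore take $M_x=c\oplus R_{u_1}[1]\oplus\cdots\oplus R_{u_m}[1]$ with $u_i$ rational and pairwise distinct.

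Next we produce a stratification of the fibre whose pieces have pure cohomology. A flag of submodules $F$ of $M_x$ with subquotient dimensions $\nu_j$ is recorded by the isomorphism types of the successive subquotients and of the induced filtrations on each summand. Following the reduction in the proof of Theorem~\ref{local}, we sort the flag using Proposition~\ref{homext}: preinjective parts to the left, preprojective parts to the right, and regular parts in the middle. The Grassmannian pieces produced by Lemma~\ref{lem:grassmannian-pushforward} contribute only $\overline{\bbQ_l}(-j)[-2j]$, which is pure. The homogeneous part consists of $m$ pairwise orthogonal regular simples in distinct tubes. Subflags of it are finite sets (choices of which $R_{u_i}[1]$ enter), so it contributes weight-zero points only, with Frobenius permuting them. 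The aim is to show that the fibre admits a decomposition into locally closed pieces, each an iterated affine-space bundle over either a point or a variety of filtrations inside a single non-homogeneous tube $\cT_j$. An affine-space bundle of rank $r$ contributes $\overline{\bbQ_l}(-r)[-2r]$ to compactly supported cohomology. If every piece has compactly supported cohomology pure of weight equal to its degree, and the fibre is proper, then the long exact sequences split on weights and $H^i(\pi^{-1}(x))$ is pure of weight $i$. We want a genuine affine paving where possible, since it gives pure Tate cohomology with even degrees.

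The main obstacle is the non-homogeneous regular part: filtrations of $c_{nh,j}\in\cT_j$, possibly periodic, with subquotients that may be arbitrary modules of dimension $\delta$ or its multiples. These mix prescribed semisimple regular quotients with unrestricted ones, the situation handled in Lemma~\ref{lem:tube-filtration-affine-paving}. Via the equivalence $\cT_j\simeq\nilrep C_{r_j}$, such filtration varieties are quiver-flag varieties for nilpotent representations of the cyclic quiver. We plan to pave them by fixing the sequence of regular socles (or tops) chosen at each step. This gives a Bia\l ynicki-Birula-type decomposition from a contracting $\mathbb G_m$-action on a transverse slice to the orbit of $M_x$, with a fixed locus that is a product of smaller tube-filtration varieties. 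We then induct on the regular length of $c_{nh}$. Where a quotient of dimension $\delta$ is unrestricted, it can leave the tube, because a non-homogeneous tube meets the dimension-$\delta$ modules non-trivially. We handle this by stratifying according to whether the quotient is homogeneous, so that it contributes finitely many points as above, or lies in $\cT_j$, where the induction applies. Contracting slices then ensure that the stalk at $x$ equals the cohomology of the closed fixed piece, and purity propagates to the whole fibre.
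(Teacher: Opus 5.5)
The gap is in your second step. The first step is fine: proper base change reduces pointwise purity at $x$ to showing that $H^i(\pi_{\underline\nu}^{-1}(x))$ is pure of weight $i$. Passing from one point to all of $X(c,m)$ needs more than lisseness. You need the constituents of the cohomology sheaves to be pointwise pure of fixed weights, and Theorem~\ref{local} supplies this through $L_\chi\otimes\mathcal E_{\omega,s}$.

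\emph{No pure partition of the fibre is produced.} A partition of the fibre into locally closed pieces with $H^i_c$ pure of weight $i$ is stated as an aim, and nothing you invoke yields it.
\begin{itemize}
\item The reduction in the proof of Theorem~\ref{local} is not a partition of $\pi_{\underline\nu}^{-1}(x)$. It repeatedly writes a locus as ``everything minus the other strata''. It passes to $\mathrm{GL}_n$-quotients after discarding loci where kernels have the wrong type. It therefore proves only membership in a triangulated subcategory.
\item Purity of $H_c$ does not pass to open complements: $H^1_c(\mathbb A^1\setminus\{0\})$ has weight $0$. So this bookkeeping discards exactly the weight information you need. That is why the paper uses Theorem~\ref{local} only to control constituents.
\end{itemize}

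\emph{The pieces you describe do not occur in this fibre.} Flags in $\pi_{\underline\nu}^{-1}(x)$ have subquotients $S_{i_j}^{\oplus l_j}$, so their terms are arbitrary, typically preprojective, submodules of $M_x$.
\begin{itemize}
\item The homogeneous part does not contribute finite sets. Take the Kronecker quiver and $M_x=R_{u_1}[1]\oplus R_{u_2}[1]$. The subrepresentations with dimension vector zero at the source and $1$ at the sink already form a $\mathbb P^1$. Finiteness holds for the $\delta$-graded flags of $\pi_{c,\underline\lambda}$, which is a different map.
\item The flags induced on $c_{nh,j}$ also have non-regular terms. So they are not the varieties $\mathcal F_{x_0,\boldsymbol\gamma,J}$ of Lemma~\ref{lem:tube-filtration-affine-paving}. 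In any case that lemma concerns $R\Gamma$ of a closed variety of flags with defect-zero steps, not $H_c$ of a locally closed stratum.
\end{itemize}

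\emph{The tube step is not a workable mechanism.} A contracting $\mathbb G_m$-action on a transverse slice has the single fixed point $x_0$. It identifies the stalk with cohomology over the slice. It gives purity only together with purity of a proper pushforward from a smooth incidence variety; this is exactly how Lemma~\ref{lem:tube-filtration-affine-paving} is proved. It does not give a Bia\l ynicki-Birula decomposition of the fibre. Conversely, a torus in $\Aut(M_x)$ acting on a singular projective filtration variety need not produce affine bundles or pure strata.

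\emph{The missing idea is to transfer purity through semisimplicity rather than by paving the $\pi_{\underline\nu}$-fibre.} The paper proceeds as follows.
\begin{itemize}
\item It contracts the preprojective, preinjective and off-diagonal directions of a $\mathbb G_m$-stable slice at $x$. Hence $u_0^*\mathcal L$ is pure, and so semisimple, on the fixed locus $U_0$.
\item It then inducts over the strata $U_0\cap Y(c',m')$, ordered by support.
\item It uses Theorem~\ref{local}, Lemma~\ref{simp}, and the fact that $IC(\mo_{c_{nh,j}},\overline{\bbQ_l})$ is a shifted summand of an induction of semisimple tube-orbit IC complexes. From these, each simple constituent is, up to a scalar of absolute value $1$, a summand of $u_0^*K$. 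Here $K$ is the induction product of $IC(\mo_{x_P})$, copies of $IC(X(0,1))$ (constant sheaves on $\bfE_\delta$), semisimple tube-orbit IC complexes and $IC(\mo_{x_I})$.
\item Pointwise purity is then needed only for $K$. At the relevant points its fibres are exactly the closed varieties of Lemma~\ref{lem:tube-filtration-affine-paving}.
\item Smallness and weight-filtration arguments carry the induction from $X$-strata to $Y$-strata.
\end{itemize}
Without a substitute for this step, your outline does not establish purity of $H^*(\pi_{\underline\nu}^{-1}(x))$.
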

This theorem will be proved in Section \ref{proof}.
\begin{proposition}\label{prop4.2}
    Fix a flag sheaf complex $\mathcal{L},$ and consider the morphism $Y(c,m)\xrightarrow{j^{c,m}}\bfE_\nu,$ together with the complex ${j^{c,m}}^*\mathcal{L}$. This complex is semisimple and has the following form in $\cD^b_{\bfG_{\nu}}(Y(c,m))$: $$\oplus_{\chi,l}{j^{c,m}}^*IC(X(c,m),L_{\chi})^{\oplus a_{\chi,l}}[l](\frac{l}{2})$$ Choose a basis of the direct sum, for which the Weil structure matrix is upper triangular. The induced Weil structure on each component ${j^{c,m}}^*IC(X(c,m),L_{\chi})[l](\frac{l}{2})$ has the form $\omega\psi_{c,\chi}[l](\frac{l}{2})$ for some $|\omega|=1.$
\end{proposition}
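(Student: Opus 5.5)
We combine Theorem~\ref{local} with Theorem~\ref{pure}, using the standard fact that a mixed complex which is pure and lies in a triangulated category generated by pure simple objects splits geometrically. Write $\mathcal K={j^{c,m}}^*\mathcal L$ and $A_\chi={j^{c,m}}^*IC(X(c,m),L_\chi)$, the latter with Weil structure $\psi_{c,\chi}$.

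\emph{Step 1: the local building blocks are pure.} By \eqref{eq1}, each $A_\chi$ (with $\psi_{c,\chi}$) is a direct summand of ${\pi_{c,\underline\lambda}}_!\overline{\bbq_l}[d](d/2)$ on $Y(c,m)$, where $d=\dim X(c,m)$. Here $\pi_{c,\underline\lambda}$ is proper and small with smooth source (Lemma~\ref{small}). Hence $A_\chi$ is pure of weight $0$, simple perverse on $Y(c,m)$, and equal to the intermediate extension from $X(c,m)$ of the weight-$0$ local system $L_\chi[d](d/2)$. The twists $A_\chi\boxtimes\mathcal E_{\omega,s}$ occurring in Theorem~\ref{local} are then pure of weight $2s$.

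\emph{Step 2: $\mathcal K$ is pure.} By Theorem~\ref{local}, $\mathcal K$ is an iterated extension of objects $A_\chi\boxtimes\mathcal E_{\omega,s}[n]$. By Theorem~\ref{pure}, $\mathcal K|_{X(c,m)}$ is pointwise pure of weight $0$. We would first show that the perverse cohomology sheaves ${}^pH^i(\mathcal K)$ are again iterated extensions of the $A_\chi\boxtimes\mathcal E_{\omega,s}$; this is a perverse dévissage in the abelian subcategory of perverse sheaves generated by the $A_\chi$, which is closed under extensions and subquotients. Each ${}^pH^i(\mathcal K)$ has no composition factor supported on the boundary $Y(c,m)\setminus X(c,m)$, so it is determined by its restriction to $X(c,m)$, namely ${}^pH^i(\mathcal K)|_{X(c,m)}=H^{i-d}(\mathcal K|_{X(c,m)})[d]$. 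That restriction is lisse by Remark~\ref{lisse} and pointwise pure of weight $i$ by Theorem~\ref{pure}. Hence every composition factor $A_\chi\boxtimes\mathcal E_{\omega,s}$ of ${}^pH^i(\mathcal K)$ must satisfy $2s=i$, so ${}^pH^i(\mathcal K)$ is pure of weight $i$ and $\mathcal K$ is pure of weight $0$.

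\emph{Step 3: splitting.} By \cite[5.4.5--5.4.6]{BBD}, a pure complex on a variety over $\bbF_q$ is geometrically isomorphic to $\bigoplus_i{}^pH^i(\mathcal K)[-i]$. It remains to show each ${}^pH^i(\mathcal K)$ is geometrically semisimple. By \cite[5.3.8]{BBD}, a pure perverse sheaf is geometrically semisimple. Its simple constituents are the $A_\chi$, and with $l=-i$ this gives the stated decomposition with multiplicities $a_{\chi,l}$.

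\emph{Step 4: Weil structures.} Fix $\chi$ and $l$. The Weil structure on ${}^pH^{-l}(\mathcal K)$ induces one on the isotypic multiplicity space
\[
\Hom(A_\chi,{}^pH^{-l}(\mathcal K)),
\]
because $\End(A_\chi)=\overline{\bbq_l}$ for the simple perverse sheaf $A_\chi$. Choosing a basis in which Frobenius on this space is upper triangular produces a filtration whose graded pieces are $A_\chi$ equipped with $\lambda\psi_{c,\chi}[l](l/2)$. Purity of weight $0$ forces $|\lambda|=1$; in terms of the normalization $\mathcal E_{\omega,s}$, this means $2s=-l$ exactly.

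The \textbf{main obstacle} is the dévissage in Step 2. Theorem~\ref{local} gives only triangulated generation, so we must check that perverse truncation keeps us inside the extension-closed category generated by the $A_\chi\boxtimes\mathcal E_{\omega,s}$. We must also check that weights of constituents can be read off on the open set $X(c,m)$. Both points rest on $A_\chi$ being the intermediate extension of a simple local system, so that no boundary constituents appear. To handle this, we would first apply \cite[5.1.8]{BBD}, which shows the $A_\chi$ generate a Serre subcategory of perverse sheaves, and then bound weights using the intermediate-extension property together with \cite[5.3.2]{BBD}.
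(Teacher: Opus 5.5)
Your proposal is correct and follows essentially the same route as the paper: purity of ${j^{c,m}}^*\mathcal L$ is read off on the open stratum $X(c,m)$ from Theorem~\ref{pure} and Remark~\ref{lisse}, with Theorem~\ref{local} and \eqref{eq1} guaranteeing that every perverse constituent is an intermediate extension from $X(c,m)$, and purity then gives geometric semisimplicity. Your composition-factor bookkeeping with the twists $\mathcal E_{\omega,s}$ replaces the paper's weight-filtration-plus-faithfulness argument, and your Step~4 spells out the Weil-structure claim that the paper dispatches in one sentence.
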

\begin{proof}
    Consider the morphism
\[
Y(c,m)\xrightarrow{j^{c,m}}\bfE_\nu
\]
and a flag sheaf complex $\mathcal L$.
We first prove that ${j^{c,m}}^*\mathcal L$ is pure.

Consider
\[
X(c,m)\xrightarrow{\widetilde j^{c,m}}\bfE_\nu.
\]
Since ${\widetilde j^{c,m*}}\mathcal L$ is pointwise pure of weight 0
and is a lisse sheaf complex on a smooth variety, it is pure by \cite[6.2.5]{PD}.

Moreover, the morphism
\[
\left\{
(x,F)\ \middle|\
\begin{aligned}
&x\in Y(c,m),\\
&F=\bigl(
0=V_0\subset V_1=V_{2,0}
\subset V_{2,1}\subset V_{2,2}\subset\cdots
\subset V_{2,s}=V_{3,0}\\
&\hspace{35mm}
\subset V_{3,1}\subset\cdots\subset V_{3,t}\subset V
\bigr),x(F)\subset F,\,\\
&x|_{V_1/V_0}\cong C_I,\qquad
x|_{V_{2,i}/V_{2,i-1}}\cong C_{nh,i},\\
&x|_{V_{3,j}/V_{3,j-1}}
\text{ is homogeneous regular of dimension } \lambda_i\delta,\\
&x|_{V/V_{3,t}}\cong C_P
\end{aligned}
\right\}
\xrightarrow{\pi_{c,\underline{\lambda}}} Y(c,m)
\]
yields ${\pi_{c,\underline{\lambda}}}_!\overline{\bbq_l}[\dim X(c,m)](\frac{\dim(X(c,m))}{2})\cong {\iota_{c,m}}_{!*}(\oplus_{\chi\in X(S_m)}L_{\chi}[\dim X(c,m)](\frac{\dim(X(c,m))}{2})^{\oplus K_{\underline{\lambda},\chi}})\simeq \oplus_{\chi\in X(S_m)}{j^{c,m}}^*IC(X(c,m),L_{\chi})^{\oplus K_{\underline{\lambda},\chi}}$ as in Equation \ref{eq1}. And ${\iota_{c,m}}_{!*}(L_{\chi}[\dim X(c,m)](\frac{\dim(X(c,m))}{2}))$ is a simple perverse sheaf.

Thus every simple generator of ${j^{c,m}}^*\mathcal L$
has the form $\iota_{c,m!*}P$, up to a shift,
where $P$ is a simple perverse sheaf, up to a shift,
occurring in ${\widetilde{j}^{c,m*}}\mathcal L,$ and $\iota_{c,m}:X(c,m)\rightarrow Y(c,m).$

Moreover, $P$ is a lisse sheaf up to a shift.

As in the proof of Theorem \ref{pure},
we consider the weight filtration of
${}^p\mathbf{H}^i({j^{c,m}}^*\mathcal L)$.

$$0\subset P_1\subset P_2\subset \cdots \subset P_{l-1}\subset P_l= {}^p\mathbf{H}^i({j^{c,m}}^*\mathcal L),$$ where each $P_i/P_{i-1}$ is pure of weight $wt(P_i/P_{i-1})$ and $wt(P_i/P_{i-1})<wt(P_{i+1}/P_i).$

Applying $\iota_{c,m}^*,$ we preserve both the weights and the filtration.

Moreover, ${}^p\bfH^i({\widetilde j^{c,m*}}\mathcal L)\cong \bfH^{i-\dim X(c,m)}({\widetilde j^{c,m*}}\mathcal L)[\dim X(c,m)],$ since $\bfH^{k}({\widetilde j^{c,m*}}\mathcal L)$ is a lisse sheaf complex for any $k.$ Thus ${}^p\bfH^i({\widetilde j^{c,m*}}\mathcal L)\cong \bfH^{i-\dim X(c,m)}({\widetilde j^{c,m*}}\mathcal L)[\dim X(c,m)],$ and this complex is pure of weight $i.$ If $\iota_{c,m}^*(P_i/P_{i-1})=0,$ then $P_i/P_{i-1}=0$, by the description of the simple generators of ${}^p\mathbf{H}^i({j^{c,m}}^*\mathcal L)$ and the faithfulness in \cite[Proposition 5.3.9]{Pramod-2021}.

We find that
${}^p\mathbf{H}^i({j^{c,m}}^*\mathcal L)$
is pure of the same weight as
${}^p\mathbf{H}^i({\widetilde j^{c,m*}}\mathcal L)$.

Thus ${ j^{c,m*}}\mathcal L$ is pure of weight 0
and hence is semisimple. The Weil structure can be computed by restricting it to $X(c,m).$
\end{proof}
\begin{remark}\label{wei1}
    By the proposition above, we have $$\chi({j^{c,m}}^*\mathcal{L})=\sum_{\chi,l}\sum_{i=1}^{a_{\chi,l}}\chi({j^{c,m}}^*IC(X(c,m),L_{\chi}),\omega_{\chi,l}^i\psi_{c,\chi})(-\sqrt{q})^{-l}.$$
\end{remark}
\begin{proposition}\cite[Proposition 8.3]{XIAO2023510}
    There exists a Laurent polynomial $\phi_{\mathcal{L}}^{c,\chi}(v)\in \bbQ[v,v^{-1}],$ for each $(c,m),$ such that, for $q=p^r,$ with $r$ sufficiently large, we have $$\chi(\mathcal{L})=\sum_{c,m}\sum_{\chi\in X(S_m)}\phi_{\mathcal{L}}^{c,\chi}(\sqrt{q})\chi({j^{c,m}}_!{j^{c,m}}^*IC(X(c,m),L_{\chi}),{j^{c,m}}_!\psi_{c,\chi}).$$
\end{proposition}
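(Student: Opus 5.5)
The plan is to work on the level of functions, through the Hall algebra. I would transport the identity through the map $\Phi_\nu$ from the proof of Lemma~\ref{lem:trace-N-ct}, which identifies trace functions on $\bfE_\nu^{\operatorname{Fr}^r}$ with elements of $\mathcal H_q(Q)$ compatibly with the two multiplications. By Lemma~\ref{lem:trace-N-ct}, $\chi\bigl({j^{c,m}}_!{j^{c,m}}^*IC(X(c,m),L_{\chi}),{j^{c,m}}_!\psi_{c,\chi}\bigr)=v^{-\dim\bfG_\nu+|\nu|}N(c,t_\lambda)$ with $\chi=t_\lambda$. The statement is therefore equivalent to a claim in the Hall algebra: $\Phi_\nu(\chi(\mathcal L))$ is a linear combination of the elements $N(c,t_\lambda)$, with coefficients in $\bbQ[v,v^{-1}]$ that do not depend on $q$. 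Here $(c,m)$ runs over $\widetilde\cG_\nu$, with no aperiodicity condition.

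First I would compute $\Phi_\nu(\chi(\mathcal L))$. Take $\mathcal L=\mathcal L_{\underline\nu}$ with $\underline\nu=(l_1i_1,\dots,l_ni_n)$. Using $\chi(\mathcal F\diamond\mathcal G)=\chi(\mathcal F)\cdot\chi(\mathcal G)$ and Theorem~\ref{sheaf-function correspondence}, $\chi(\mathcal L_{\underline\nu})$ is the product of the trace functions of the constant sheaves on the one-point spaces $\bfE_{l_ji_j}$. Under $\Phi$ this product becomes, up to an explicit power of $v$, the product of the divided powers $u_{i_1}^{(l_1)}*\cdots*u_{i_n}^{(l_n)}$. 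So $\Phi_\nu(\chi(\mathcal L))$ lies in the composition subalgebra, which is generated by the simple modules.

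Next I would invoke the PBW theory of \cite{XIAO2023510}, Sections~6--8. There every monomial in divided powers is expanded in the family $\{N(c,t_\lambda)\}$, and the coefficients are generic, meaning they are given by Laurent polynomials in $v$. The mechanism has three parts. Hall polynomials exist for the preprojective and preinjective components and for the nonhomogeneous tubes, since those reduce to cyclic quivers as in \cite{DengDuXiao2007}. The Hom/Ext vanishing of Proposition~\ref{homext} lets one reorder products as $\langle c_P\rangle*\langle c_{nh}\rangle*(\text{homogeneous part})*\langle c_I\rangle$. Finally, the homogeneous part of any fixed dimension $m\delta$ is a symmetric function in the $H_n$, so it expands in the $S_\lambda$. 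Substituting these expansions and multiplying by $v^{\dim\bfG_\nu-|\nu|}$ produces $\phi^{c,\chi}_{\mathcal L}(v)$. Evaluating at $v=-\sqrt q$, and absorbing the sign convention into $\phi$, gives the stated formula.

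The main obstacle is the uniformity in $q$, in particular the homogeneous tubes. Their number of $\bbF_q$-rational parameters is $q+1-e$ plus contributions from closed points of higher degree, so the counts that appear in products of the $H_n$ are only polynomial in $q$ once $q$ is large. This is where the hypothesis ``$r$ sufficiently large'' enters: it guarantees that the nonhomogeneous tubes are defined over $\bbF_q$ and that every $\lambda\vdash m$ is realized by rational points. To handle this I would first try to avoid counting altogether. The expansion of the homogeneous part can be taken from Proposition~\ref{prop4.2} together with Theorem~\ref{local} and Remark~\ref{wei1}, which already write $\chi({j^{c,m}}^*\mathcal L)$ as a sum of twisted $\chi({j^{c,m}}^*IC(X(c,m),L_\chi),\psi_{c,\chi})$. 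The remaining task would then be to show that the phases $\omega$ contribute only through Laurent polynomials in $\sqrt q$. This should follow by comparing with the generic Hall-algebra expansion and using the linear independence of the functions $N(c,t_\lambda)$ for large $q$.
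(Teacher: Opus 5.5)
Your main line is the paper's proof. Via Lemma~\ref{lem:trace-N-ct} the functions on the right become $v^{-\dim\bfG_\nu+|\nu|}N(c,t_\lambda)$, and $\Phi_\nu(\chi(\mathcal L_{\underline\nu}))$ is, up to a power of $v$, a product of elements $\langle S_i^{\oplus l}\rangle$, each of the form $N(c',t_\varnothing)$. The generic expansion of such products in the family $N(c,t_\lambda)$ is exactly \cite[Proposition~8.3]{XIAO2023510}, which the paper cites.

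Drop your final paragraph, however, because the suggested fallback is circular within this paper. The uniformity in $q$, including the homogeneous-tube counts, is part of what the citation already supplies. Proposition~\ref{prop4.2} and Remark~\ref{wei1} leave the phases $\omega$ undetermined, and the paper removes them only in Lemma~\ref{shi1}, by comparing with the very expansion you are trying to prove. Linear independence of the $N(c,t_\lambda)$ can identify coefficients once a $q$-independent Laurent expansion is known, but it cannot produce one.
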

\begin{proof}
    This proposition follows from $\chi({j^{c,m}}_!{j^{c,m}}^*IC(X(c,m),L_{\chi}),{j^{c,m}}_!\psi_{c,\chi})=(-\sqrt{q})^{-\dim \bfG_{\nu}+\sum_{i\in Q_0} \nu_i}N(c,t_{\chi})$ as defined in \cite[Section 8]{XIAO2023510}, and the fact that $\chi(\mathcal{L})$ is a product of elements of the form $N(c',t'_{\chi}),$ up to multiplication by $v^s,$ for some $s\in \bbZ.$ The result then follows from \cite[Proposition 8.3]{XIAO2023510}.
\end{proof}
\begin{lemma}\cite[Lemma 8.2]{XIAO2023510}
    $\left\{\chi({j^{c,m}}_!{j^{c,m}}^*IC(X(c,m),L_{\chi}),{j^{c,m}}_!\psi_{c,\chi})|(c,m)\in \tilde{\cG}_{\nu},\chi\in X(S_m)\right\}$ is $\bbC-$linearly independent when $q=p^r,$ with $r$ sufficiently large. 
\end{lemma}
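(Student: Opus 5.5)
The plan is to reduce linear independence to a triangularity statement with respect to the partial order $\preceq$ of Definition~\ref{def:xxz-order}, and then to linear independence of the restrictions to a single stratum. Write
\[
g_{c,\chi}:=\chi\bigl((j^{c,m})_!(j^{c,m})^*IC(X(c,m),L_\chi),\,(j^{c,m})_!\psi_{c,\chi}\bigr).
\]
Since $(j^{c,m})_!$ extends by zero, $g_{c,\chi}$ is supported on $Y(c,m)^{\operatorname{Fr}^r}$. By the finite disjoint decomposition \eqref{eq:geometric-partition}, the supports of $g_{c,\chi}$ and $g_{c',\chi'}$ are disjoint whenever $(c,m)\ne(c',m')$. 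So a relation $\sum a_{c,\chi}g_{c,\chi}=0$ splits, by restricting to each $Y(c,m)^{\operatorname{Fr}^r}$, into separate relations $\sum_{\chi\in X(S_m)}a_{c,\chi}g_{c,\chi}=0$, one for each fixed $(c,m)$. This disjointness step is elementary, provided the strata are defined over $\bbF_q$, which holds for $r$ large.

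It therefore suffices to show that, for fixed $(c,m)$, the functions $g_{c,\chi}$ with $\chi\in X(S_m)$ are linearly independent. We may restrict further to the open dense subset $X(c,m)$. There $(j^{c,m})^*IC(X(c,m),L_\chi)$ is $L_\chi[d](d/2)$, with $d=\dim X(c,m)$. Hence, up to the common nonzero scalar $(-\sqrt q)^{-d}$, $g_{c,\chi}$ restricts to the trace function of the local system $L_\chi$. The value of $L_\chi$ at a rational point $x$ is determined as follows. Write $M_x\simeq c\oplus R_{u_1}[1]\oplus\cdots\oplus R_{u_m}[1]$. Frobenius permutes the set $\{u_1,\dots,u_m\}\subset Z_h$, and this permutation determines a conjugacy class $w_x\in S_m$. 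The étale $S_m$-covering obtained by ordering the $u_i$ (the covering $p_{c,(1,\dots,1)}$) then gives
\[
\chi(L_\chi)(x)=\chi(w_x).
\]
Here one must check that the Weil structure $\psi_{c,\chi}$ is the natural one coming from the covering, with no extra twist. This follows from its construction as a summand of ${\pi_{c,(1,\dots,1)}}_!\overline{\bbq_l}$ and Lemma~\ref{small}.

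The main step is to show that every conjugacy class of $S_m$ occurs as some $w_x$ once $q$ is large. Let $\mu$ be a partition of $m$. We need $m$ distinct points of $Z_h$ forming Frobenius orbits of sizes $\mu_1,\dots,\mu_l$, with the orbits chosen distinct. Since $Z_h$ is $\mathbb P^1$ minus finitely many points, all defined over $\bbF_q$, the number of closed points of degree $k$ grows like $q^k/k$. So for $r$ large enough such points exist. The point $x$ is then rational: take $c$ over $\bbF_q$, and take the homogeneous part to be the sum of the $\bbF_q$-forms of the corresponding regular simple modules over the residue fields of the chosen closed points.

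Given this, the restrictions of the $g_{c,\chi}$ to $X(c,m)$ are the characters $\chi$ evaluated along a surjection onto the conjugacy classes. Irreducible characters are linearly independent as class functions, so the $g_{c,\chi}$ are linearly independent for each fixed $(c,m)$. Combined with the first step, this gives linear independence of the whole family. I expect the only real obstacle to be the Weil structure identification and the rationality and parametrization details at the start of the previous paragraph; this is the point where the condition that $r$ be sufficiently large is genuinely used.
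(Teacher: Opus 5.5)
Your argument is correct. It also does something the paper does not: the paper gives no proof of this statement and simply imports it from \cite[Lemma~8.2]{XIAO2023510}.

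Within the paper's own framework, the natural route would go through Lemma~\ref{lem:trace-N-ct}. That lemma identifies $g_{c,\chi}$ with $v^{-\dim\bfG_\nu+|\nu|}N(c,t_\lambda)$. One would then work in the Hall algebra. First, $N(c,t_\lambda)=\langle c_P\rangle*\langle c_{nh}\rangle*S_\lambda*\langle c_I\rangle$ is supported on modules $c\oplus R$, because all the relevant extensions split by \eqref{eq:ext-separation} and \eqref{eq:tube-orthogonality}. Second, the Schur elements $S_\lambda$ are linearly independent.

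Your proof bypasses the Hall algebra entirely. Support separation comes for free from $(j^{c,m})_!$ and the partition \eqref{eq:geometric-partition}. Independence within a stratum reduces, via the $S_m$-covering $p_{c,(1,\ldots,1)}$, to orthogonality of irreducible characters, once every cycle type is realized by Frobenius acting on $\{u_1,\ldots,u_m\}$. This is essentially the computation that would prove independence of the $S_\lambda$, namely evaluation on $\bigoplus_i R_{P_i}$ for closed points $P_i$ of prescribed degrees. Your route also makes explicit what ``$r$ sufficiently large'' means: $q+1-e\geq m$, and $Z_h$ must have enough closed points of each degree $\leq m$, where $m$ is bounded in terms of $\nu$.

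Two small remarks.
\begin{enumerate}
\item The Weil-structure point you flag as the main obstacle is harmless. By construction, $\psi_{c,\chi}$ comes from the $S_m$-equivariant Weil structure on the pushforward of the constant sheaf along the covering. Any other choice would, by Schur's lemma for the irreducible local system $L_\chi$, only rescale $g_{c,\chi}$ by a nonzero constant, and that does not affect linear independence.
\item Rationality of $x$ follows more cheaply from Lang's theorem. The orbit of $c\oplus\bigoplus_i R_{u_i}[1]$ is Frobenius-stable whenever $\{u_i\}$ is, and its stabilizer $\Aut(M_x)$ is connected, being open in $\End(M_x)$.
\end{enumerate}
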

\begin{lemma}\label{shi1}
    In Remark \ref{wei1}, $\omega_{\chi,l}^i=1$ for all $\chi,l,i.$ 
\end{lemma}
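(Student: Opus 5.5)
The plan is to compare the two expressions for $\chi(\mathcal L)$ restricted to $Y(c,m)$: the one from Remark~\ref{wei1} and the one from \cite[Proposition 8.3]{XIAO2023510}. Both are evaluated for all $q=p^r$ with $r$ large, and the comparison uses the linear independence lemma \cite[Lemma 8.2]{XIAO2023510}.

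\emph{First step.} Apply ${j^{c,m}}^*$ to the expansion of $\chi(\mathcal L)$. The functions $\chi({j^{c',m'}}_!{j^{c',m'}}^*IC(X(c',m'),L_{\chi'}))$ are supported on $Y(c',m')$. Since the strata $Y(c',m')$ are disjoint by \eqref{eq:geometric-partition}, only the terms with $(c',m')=(c,m)$ survive. This gives
\[
\chi({j^{c,m}}^*\mathcal L)=\sum_{\chi}\phi_{\mathcal L}^{c,\chi}(\sqrt q)\,\chi({j^{c,m}}^*IC(X(c,m),L_\chi),\psi_{c,\chi}).
\]
The same functions restricted to $Y(c,m)^{\operatorname{Fr}^r}$ remain linearly independent; this is the $(c,m)$-part of Lemma 8.2. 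Comparing with Remark~\ref{wei1}, for each $\chi$ and each large $r$ we get the identity
\[
\sum_{l}\Bigl(\sum_{i=1}^{a_{\chi,l}}(\omega_{\chi,l}^i)^r\Bigr)(-\sqrt{q})^{-l}=\phi_{\mathcal L}^{c,\chi}(\sqrt q),\qquad q=p^r.
\]
Here we track that replacing $\operatorname{Fr}$ by $\operatorname{Fr}^r$ raises the scalar $\omega$ to the power $r$. We also use that twisting the Weil structure by $\omega$ multiplies the trace function by $\omega^r$, since $\psi_{c,\chi}$ is the reference structure.

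\emph{Second step, the key one.} Write $\phi_{\mathcal L}^{c,\chi}(v)=\sum_l b_l v^{-l}$ with rational $b_l$. Then, for all sufficiently large $r$,
\[
\sum_l \Bigl(\sum_i (\omega_{\chi,l}^i)^r - b_l(-1)^{l}\ \text{(sign adjusted)}\Bigr) p^{-rl/2}=0.
\]
The quantities $p^{-rl/2}$ for distinct $l$ grow at different exponential rates, while the bracketed coefficients are bounded because $|\omega|=1$. Dividing by the dominant power and letting $r\to\infty$ along the allowed $r$ peels off each $l$ in turn. The outcome is that, for each $l$,
\[
\sum_i(\omega_{\chi,l}^i)^r = \text{const}=:\beta_l
\]
for all large $r$. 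A cleaner way to say this is that $r\mapsto \sum_{l,i}(\omega^i_{\chi,l}p^{-l/2})^r$ is an exponential sum. Distinct characters $r\mapsto\alpha^r$ are linearly independent on any cofinite set of integers (Vandermonde/Artin). So the multiset $\{\omega^i_{\chi,l}p^{-l/2}\}$ must coincide with the multiset coming from the right side, whose bases are $\pm p^{-l/2}$ with the $v=-\sqrt q$ sign absorbed.

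\emph{Third step.} Conclude that all $\omega_{\chi,l}^i$ equal $1$. The main obstacle is the sign convention $v=-\sqrt q$ and any ambiguity of $\sqrt q$ in $p^{r/2}$ for odd $l$. I would handle it by computing $\chi$ with respect to the fixed normalized twists $[l](l/2)$, which already absorb $(-\sqrt q)^{-l}$. The right side then contributes exactly the base $1$ for each shift $l$, with multiplicity $b_l$. Independence of characters forces $\omega^i_{\chi,l}=1$ and $a_{\chi,l}=b_l$. Rationality of $\phi$ and its independence of $r$ are what make this comparison valid, and I expect verifying that independence carefully to be the delicate point.
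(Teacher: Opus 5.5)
Your proposal is correct and follows the paper's own argument. You restrict both trace expansions to $Y(c,m)$, compare coefficients using the linear independence in \cite[Lemma 8.2]{XIAO2023510}, and vary the field $\mathbb{F}_{q^t}$ so that $\sum_i(\omega^i_{\chi,l})^t$ is forced to be constant, which gives $\omega^i_{\chi,l}=1$. Of your two ways of separating the shifts $l$, keep the linear independence of the exponentials $t\mapsto(\omega\sqrt{q}^{-l})^t$ (Vandermonde). The limit-and-peel version alone only yields $\sum_i(\omega^i_{\chi,l})^t\to\text{const}$. Upgrading that to exact equality, which you need before peeling the next $l$, requires the same independence argument.
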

\begin{proof}
    Using the trace map $\chi(-)$ and applying ${j^{c,m}}^*$ to $\mathcal{L},$ we obtain $$\chi({j^{c,m}}^*\mathcal{L})=\sum_{\chi,l}\sum_{i=1}^{a_{\chi,l}}\chi({j^{c,m}}^*IC(X(c,m),L_{\chi}),\omega_{\chi,l}^i\psi_{c,\chi})(-\sqrt{q})^{-l}$$ and $$\chi({j^{c,m}}^*\mathcal{L})=\sum_{\chi\in X(S_m)}\phi_{\mathcal{L}}^{c,\chi}(\sqrt{q})\chi({j^{c,m}}^*IC(X(c,m),L_{\chi}),\psi_{c,\chi}).$$

    By choosing $\bbF_{q^t}-$points for different $t>0,$ we find that $\sum_{i}(\omega_{\chi,l}^i)^t$ is constant for all sufficiently large $t$. Thus $\omega_{\chi,l}^i=1,$ for all $\chi,l,i.$ 
\end{proof}
For $(c',m')$ which is aperiodic, and any $(c,m)\in \widetilde{\cG_{\nu}},$ $\chi\in X(S_{m}),\chi'\in X(S_{m'}),$ define $f_{c,\chi}^{c',\chi'}(v)\in \bbN[v,v^{-1}]$ by $$f_{c,\chi}^{c',\chi'}(v)=\sum_{i}\dim(\Hom_{\cD^b_{\bfG_{\nu}}(X(c,m))}(\mathbf{H}^{i-\dim X(c,m)}(\tilde{j}^{c,m*}IC(X(c',m'),L_{\chi'})),L_{\chi})v^{i}.$$
\begin{lemma}
 For $(c',m')$ which is aperiodic, and any $(c,m)\in \widetilde{\cG_{\nu}},$ $\chi\in X(S_{m}),\chi'\in X(S_{m'}),$ $f_{c,\chi}^{c',\chi'}(v)$ has the following properties, \\
 $\bullet$ $f_{c,\chi}^{c,\chi}(v)=1,$\\
 $\bullet$ $f_{c,\chi}^{c',\chi'}(v)\in v^{-1}\mathbb{N}[v^{-1}]$ for $(c,m)\prec(c',m')$ and $f_{c,\chi}^{c',\chi'}(v)=0$ for $(c,m)\not\preceq(c',m').$
\end{lemma}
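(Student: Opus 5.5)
The plan is to read off all three properties from the support and cosupport behaviour of the intermediate extension $IC(X(c',m'),L_{\chi'})$. Two earlier results do most of the work. Proposition~\ref{open} controls where the support can meet the strata. Remark~\ref{lisse} makes the cohomology sheaves of the restriction to $X(c,m)$ lisse. Throughout, $d=\dim X(c,m)$ and $d'=\dim X(c',m')$. The degree-$i$ coefficient of $f_{c,\chi}^{c',\chi'}$ only involves the cohomology sheaf $\mathbf H^{i-d}$ of $\tilde j^{c,m*}IC(X(c',m'),L_{\chi'})$, so the whole argument is about which of these sheaves can be nonzero.

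\emph{Diagonal case.} Take $(c,m)=(c',m')$. By definition of the intermediate extension, the restriction of $IC(X(c,m),L_{\chi'})$ to the open stratum $X(c,m)$ is $L_{\chi'}[d](\frac d2)$. Hence $\mathbf H^{i-d}$ vanishes unless $i=0$, and $\mathbf H^{-d}=L_{\chi'}$ up to Tate twist, which does not affect the geometric $\Hom$. The local systems $L_\chi$ come from the $S_m$-covering $p_{c,(1,\dots,1)}$ and correspond to irreducible $S_m$-representations. By Schur's lemma, $\dim\Hom(L_{\chi'},L_\chi)=\delta_{\chi,\chi'}$. This gives $f_{c,\chi}^{c,\chi}=1$; more generally $f_{c,\chi}^{c,\chi'}=\delta_{\chi\chi'}$.

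\emph{Vanishing case.} The complex $IC(X(c',m'),L_{\chi'})$ is supported on $\overline{X(c',m')}$. By Proposition~\ref{open}, this closure is contained in $\bigcup_{(c'',m'')\preceq(c',m')}Y(c'',m'')$. The sets $Y(c'',m'')$ are pairwise disjoint by \eqref{eq:geometric-partition}. So if $(c,m)\not\preceq(c',m')$, then $X(c,m)\subset Y(c,m)$ misses the support, the restriction is zero, and $f_{c,\chi}^{c',\chi'}=0$.

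\emph{Strictly smaller case.} Let $(c,m)\prec(c',m')$. Then $X(c,m)$ lies in $\overline{X(c',m')}\setminus X(c',m')$, since it is disjoint from $Y(c',m')\supset X(c',m')$. The complex $IC(X(c',m'),L_{\chi'})$ belongs to $\mathcal P$ because $c'$ is aperiodic. It is therefore a shifted direct summand of a flag sheaf complex, so by Remark~\ref{lisse} each $\mathbf H^j(\tilde j^{c,m*}IC)$ is lisse on the smooth irreducible variety $X(c,m)$. A nonzero lisse sheaf has support of dimension exactly $d$. The strict support condition for intermediate extensions off the open stratum says
\[
\dim\operatorname{supp}\mathbf H^j(IC)\cap(\overline{X(c',m')}\setminus X(c',m'))<-j .
\]
Applied here, it forces $\mathbf H^j|_{X(c,m)}=0$ unless $d<-j$. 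Writing $j=i-d$, only exponents $i<0$ survive. Each coefficient is the dimension of a $\Hom$ space, so it lies in $\mathbb N$. Hence $f_{c,\chi}^{c',\chi'}\in v^{-1}\mathbb N[v^{-1}]$.

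\emph{Expected difficulty.} The delicate point is the strictly smaller case. There we need the lisseness of the cohomology sheaves (Remark~\ref{lisse}, resting on Theorem~\ref{local}) to convert the support condition into a statement about degrees. We also have to check that the strict inequality $-j>\dim$ applies to the whole stratum $X(c,m)$, which requires $X(c,m)$ to be disjoint from $X(c',m')$.
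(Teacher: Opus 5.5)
Your proposal is correct and follows essentially the paper's route. Proposition~\ref{open} gives the vanishing, and the strict bound comes from $j^{c,m*}IC(X(c',m'),L_{\chi'})$ lying in ${}^p\mathcal{D}^{\leq -1}$ off the open stratum; the paper reads this off through perverse cohomology on $Y(c,m)$, while you convert it into a degree bound on $X(c,m)$ via lisseness.

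One small correction: $(c,m)\prec(c',m')$ does not by itself place $X(c,m)$ inside $\overline{X(c',m')}\setminus X(c',m')$, since Proposition~\ref{open} only bounds the closure from above. Your support argument needs only $X(c,m)\cap X(c',m')=\emptyset$, and it goes through verbatim on $X(c,m)\cap\overline{X(c',m')}$.
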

\begin{proof}
 $$\begin{aligned}
     &\dim(\Hom_{\cD^b_{\bfG_{\nu}}(X(c,m))}(\mathbf{H}^{i-\dim X(c,m)}(\tilde{j}^{c,m*}IC(X(c',m'),L_{\chi'})),L_{\chi})\\=&\dim(\Hom_{\cD^b_{\bfG_{\nu}}(Y(c,m))}({}^p\mathbf{H}^{i}(j^{c,m*}IC(X(c',m'),L_{\chi'})),{\iota_{c,m}}_{!*}(L_{\chi}[\dim X(c,m)]))
 \end{aligned}$$
 And $\operatorname{supp}(IC(X(c',m'),L_{\chi'}))=\overline{X(c',m')}\subset \bigcup_{(c,m)\preceq(c',m')}Y(c,m)$ by Proposition \ref{open}. Thus $f_{c,\chi}^{c',\chi'}(v)=0$ for $(c,m)\not\preceq(c',m').$

 By \cite[Lemma 3.1.4, Lemma 3.3.4]{Pramod-2021}, for $(c,m)\prec(c',m'),$ ${j^{c,m}}^*IC(X(c',m'),L_{\chi'})\in {}^p\cD^{\leq -1}_{\bfG_{\nu}}(Y(c,m)).$ Then we have the proof.
\end{proof}
\begin{theorem}\label{the4.7}
    For $(c',m')$ which is aperiodic, and any $(c,m)\in \widetilde{\cG_{\nu}},$ $\chi\in X(S_{m}),\chi'\in X(S_{m'}),$ we have the identity $$\chi(IC(X(c',m'),L_{\chi'}),\phi_{c',\chi'})=\sum_{c,\chi} f_{c,\chi}^{c',\chi'}(-\sqrt{q})\chi({j^{c,m}}_!{j^{c,m}}^*IC(X(c,m),L_{\chi}),{j^{c,m}}_!\psi_{c,\chi}).$$
\end{theorem}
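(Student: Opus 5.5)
The identity follows by decomposing the trace function of $IC(X(c',m'),L_{\chi'})$ along the finite stratification \eqref{eq:geometric-partition} and identifying each stratum contribution through Proposition~\ref{prop4.2} and Lemma~\ref{shi1}.

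Since $(c',m')$ is aperiodic, $IC(X(c',m'),L_{\chi'})$ is a shifted direct summand of a flag sheaf complex $\mathcal L$ (Lusztig). The first step is to transfer the conclusions of Proposition~\ref{prop4.2} and Lemma~\ref{shi1} from $\mathcal L$ to this summand. Pointwise purity on $X(c,m)$ (Theorem~\ref{pure}) passes to direct summands, and so does the semisimplicity of ${j^{c,m}}^*$. Hence
\[
{j^{c,m}}^*IC(X(c',m'),L_{\chi'})\simeq\bigoplus_{\chi,l}{j^{c,m}}^*IC(X(c,m),L_{\chi})^{\oplus a_{\chi,l}}[l](\tfrac l2),
\]
with induced Weil structures $\omega\psi_{c,\chi}$. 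The Frobenius comparison argument of Lemma~\ref{shi1} then forces $\omega=1$. That argument needs $\chi(IC(X(c',m'),L_{\chi'}),\phi_{c',\chi'})$ to lie in the span of the $N(c,t_\chi)$ with coefficients in $\mathbb Q[\sqrt q^{\pm1}]$ independent of $q$; this holds because this trace function is a canonical basis element, hence a $\mathbb Z[v^{\pm1}]$-combination of flag-sheaf traces. Here $\phi_{c',\chi'}$ is the canonical Weil structure.

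Next, the multiplicities $a_{\chi,l}$ are read off on the open stratum. Restricting further along $\widetilde\iota^{c,m}$ to $X(c,m)$, where everything is lisse (Remark~\ref{lisse}) and ${j^{c,m}}^*IC(X(c,m),L_\chi)$ becomes $L_\chi[\dim X(c,m)]$, gives
\[
a_{\chi,l}=\dim\Hom\bigl(\mathbf H^{-l-\dim X(c,m)}(\tilde j^{c,m*}IC(X(c',m'),L_{\chi'})),L_\chi\bigr),
\]
using that the $L_\chi$ are pairwise non-isomorphic irreducible local systems. After the reindexing $i=-l$, this matches the definition of $f^{c',\chi'}_{c,\chi}(v)$.

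Then comes the trace computation. Because the $Y(c,m)$ form a finite partition of $\bfE_\nu$, the trace function satisfies
\[
\chi(A)=\sum_{(c,m)}\chi\bigl({j^{c,m}}_!{j^{c,m}}^*A\bigr)
\]
pointwise. Taking $A=IC(X(c',m'),L_{\chi'})$ and using Theorem~\ref{sheaf-function correspondence}, the shift and twist $[l](l/2)$ contribute $(-1)^l\sqrt q^{-l}=(-\sqrt q)^{-l}$ (the convention of Remark~\ref{wei1}). Summing over $l$ gives $f^{c',\chi'}_{c,\chi}(-\sqrt q)$ times $\chi({j^{c,m}}_!{j^{c,m}}^*IC(X(c,m),L_\chi),{j^{c,m}}_!\psi_{c,\chi})$. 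Strata with $(c,m)\not\preceq(c',m')$ contribute zero by Proposition~\ref{open}, consistent with the preceding lemma.

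The main obstacle is the Weil-structure bookkeeping: showing that the summand decomposition is compatible with Frobenius, so that each summand carries exactly $\psi_{c,\chi}$ and not a twist by an $\omega$ with $|\omega|=1$. Purity only gives an upper-triangular Frobenius matrix. The plan is to use that Jordan-block (non-split) parts do not affect traces, so only the diagonal scalars $\omega$ matter, and then to invoke the linear independence lemma \cite[Lemma~8.2]{XIAO2023510} over $\mathbb F_{q^t}$ for all large $t$, exactly as in Lemma~\ref{shi1}. The sign and shift conventions relating $i$, $l$ and $\dim X(c,m)$ also need to be checked carefully against Lemma~\ref{lem:trace-N-ct}.
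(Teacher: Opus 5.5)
Your proposal is correct and follows the paper's proof. Both decompose the trace over the strata $Y(c,m)$, split ${j^{c,m}}^*IC(X(c',m'),L_{\chi'})$ into shifted restricted IC complexes via Proposition~\ref{prop4.2}, remove the unit scalars $\omega$ by the Frobenius and linear-independence argument of Lemma~\ref{shi1}, and let $[l](l/2)$ contribute $(-\sqrt q)^{-l}$. The only differences are minor. The paper applies Lemma~\ref{shi1} to the ambient flag complex $\mathcal L$ and inherits $\omega=1$ through the upper-triangular Weil matrix whose diagonal block is $\phi_{c',\chi'}[n](n/2)$. You instead rerun that argument directly on the summand, which requires its trace to be a $q$-uniform combination of the $N(c,t_\chi)$; this is available because it is a canonical basis element. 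You also make explicit the identification of the multiplicities $a_{\chi,l}$ with the coefficients of $f^{c',\chi'}_{c,\chi}$, which the paper leaves implicit.
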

\begin{proof}
    The object $(IC(X(c',m'),L_{\chi'}),\phi_{c',\chi'})[n](\frac{n}{2})$ is a direct summand of some flag sheaf complex $\mathcal{L}$. Choose a direct sum decomposition of $\mathcal{L},$ for which the Weil structure matrix is upper triangular. The diagonal block corresponding to this summand is $\phi_{c',\chi'}[n](\frac{n}{2}).$

    As proved in Lemma \ref{shi1}, we have $$\chi({j^{c,m}}^*\mathcal{L})=\sum_{\chi,l}\sum_{i=1}^{a_{\chi,l}}\chi({j^{c,m}}^*IC(X(c,m),L_{\chi}),\psi_{c,\chi})(-\sqrt{q})^{-l}.$$ Since $(IC(X(c',m'),L_{\chi'}),\phi_{c',\chi'})[n](\frac{n}{2})$ is a direct summand of $\mathcal{L},$ we have $${j^{c,m}}^*IC(X(c',m'),L_{\chi'})\cong \sum_{\chi,l}{j^{c,m}}^*IC(X(c,m),L_{\chi})^{\oplus b_{\chi,l}}[l](\frac{l}{2}).$$ The induced Weil structure gives $$\begin{aligned}
  \chi({j^{c,m}}^*IC(X(c',m'),L_{\chi'}),{j^{c,m}}^*\phi_{c',\chi'})&=\sum_{\chi,l}b_{\chi,l}\chi({j^{c,m}}^*IC(X(c,m),L_{\chi})[l](\frac{l}{2}),\psi_{c,\chi}[l](\frac{l}{2}))\\
  &=\sum_{\chi,l}b_{\chi,l}\chi({j^{c,m}}^*IC(X(c,m),L_{\chi}),\psi_{c,\chi})(-\sqrt{q}^{-1})^l
    \end{aligned}$$
    Combining this with $$\chi(IC(X(c',m'),L_{\chi'}),\phi_{c',\chi'})=\sum_{c,m}j^{c,m}_!{j^{c,m}}^*\chi(IC(X(c',m'),L_{\chi'}),\phi_{c',\chi'})$$

    completes the proof.
\end{proof}
\begin{remark}
Let $A$ be the set of aperiodic indices and $P$ its complement in a fixed degree.
By~\cite[Lemma~9.13]{XIAO2023510}, the PBW elements satisfy
\[
 E_\alpha=N_\alpha+\sum_{\pi\in P}b_\alpha^\pi(v)N_\pi,
 \qquad \alpha\in A.
\]
For $\beta\in A,$ $B_{\beta}$ is the corresponding canonical basis $(-\sqrt{q})^{\dim \bfG_{\nu}-\sum_{i\in Q_0}\nu_i}\chi(IC_{\beta},\phi_{\beta}).$ Write $B_\beta=\sum_{\alpha\in A}g_\alpha^\beta(v)E_\alpha$.
Comparing the coefficients of $N_\alpha$, for $\alpha\in A$, with the expansion
given by Theorem~\ref{the4.7} yields $g_\alpha^\beta(v)=f_\alpha^\beta(v)$.
Consequently, the aperiodic part of the geometric coefficient formula gives the
transition matrix from the canonical basis to the PBW basis.\end{remark}
\section{Proof of Theorem \ref{pure}}\label{proof}
To prove purity, we first establish the following lemmas.
\begin{lemma}\label{simp}
    Let $\mP$ be a simple perverse sheaf on a variety $X$. For a locally closed subvariety $U\xrightarrow{j}X,$ if $U\cap \operatorname{supp}(\mP)$ is open and nonempty in $\operatorname{supp}(\mP),$ then $\mP\simeq j_{!*}j^*\mP.$
\end{lemma}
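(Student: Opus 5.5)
The plan is to use the classification of simple perverse sheaves as intermediate extensions of irreducible local systems, together with the fact that such extensions are determined by their restriction to any open dense subset of the support.

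Write $Z=\operatorname{supp}(\mP)$. This is an irreducible closed subvariety of $X$, and $\mP\simeq IC(Z_0,\mathcal L)$ for some smooth open dense $Z_0\subset Z$ and some irreducible local system $\mathcal L$ on $Z_0$. Put $U'=U\cap Z$. By hypothesis $U'$ is open, nonempty, and hence dense in the irreducible $Z$. Since $U$ is locally closed in $X$, $U'$ is closed in $U$, so $j^*\mP$ is supported on $U'$.

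First, reduce to the case $Z=X$. Extension by zero along a closed immersion commutes with $(-)_{!*}$, $j^*$ is compatible with closed supports, and $j_{!*}$ for $U\to X$ factors through the closed subvarieties $\overline{U}$ and $Z$. Restricting everything to $\overline{U'}=Z$ replaces $U$ by $U'$, an open subset of $Z$. So we may assume $U$ is open and dense in $X=\operatorname{supp}\mP$.

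Second, for $U$ open, $j^*\mP=j^!\mP$ is perverse, and it is simple. Indeed, restriction to an open set sends simple perverse sheaves either to $0$ or to simple perverse sheaves, by the description $IC(Z_0\cap U,\mathcal L|_{Z_0\cap U})$. Here $Z_0\cap U$ is nonempty and $\mathcal L$ restricted to a dense open subset stays irreducible, because $\pi_1$ of a dense open subset of a normal variety surjects onto $\pi_1$ of the whole. Then $j_{!*}j^*\mP$ is the image of ${}^pH^0(j_!j^*\mP)\to{}^pH^0(j_*j^*\mP)$. The adjunction maps $j_!j^*\mP\to\mP\to j_*j^*\mP$ induce maps ${}^pH^0j_!j^*\mP\to\mP\to{}^pH^0j_*j^*\mP$ whose composite is the canonical map. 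The first is nonzero and the second is nonzero, since both are isomorphisms over $U$. Because $\mP$ is simple, the first is surjective and the second injective, so the image of the composite is $\mP$. Equivalently, one can invoke the characterization of $j_{!*}$ as the unique extension without subobjects or quotients supported on $X\setminus U$, which a simple $\mP$ with $\mP|_U\neq0$ satisfies.

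The main obstacle is the bookkeeping in the non-open locally closed case, where $j_{!*}$ is defined by factoring $j$ through the closure $\overline U$ and $U$ may be larger than $U'$. This needs the support considerations above to ensure that passing through $\overline{U}$ versus $Z$ gives the same result.
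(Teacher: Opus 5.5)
Your proof is correct and follows the same route as the paper. Both restrict to the closed support $Z$ using the compatibility of intermediate extension with pushforward along closed immersions, and then identify $\mathcal P|_Z$ with the intermediate extension of its nonzero restriction to the open subset $U\cap Z\subset Z$.

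The differences are only in how the sub-steps are justified:
\begin{itemize}
\item You obtain simplicity of $\mathcal P|_Z$ from the classification $\mathcal P\simeq IC(Z_0,\mathcal L)$. The paper argues formally from $i^*\mathcal P\simeq i^!\mathcal P$ and the t-exactness of $i_!$, which avoids the classification.
\item You prove the open case directly by the image argument, where the paper cites \cite[Lemma~3.3.8]{Pramod-2021}.
\item Your $\pi_1$-surjectivity step showing that $j^*\mathcal P$ is simple is not actually needed. The image argument only uses $j^*\mathcal P\neq 0$.
\end{itemize}
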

\begin{proof}
    Write the support of $\mP$ as the closed subvariety $V\xrightarrow{i}X.$ Then $i_!i^*\mP\simeq \mP$ and $i_!i^!\mP\simeq \mP.$ It follows that $i^*\mP\simeq i^!\mP.$ Thus, by \cite[Lemma 3.1.4]{Pramod-2021}, $i^*\mP\simeq i^!\mP$ is perverse. Given an exact sequence of perverse sheaves $$0\rightarrow \mP_1\rightarrow i^*\mP\rightarrow \mP_2\rightarrow 0,$$ applying $i_!,$ we obtain the exact sequence of perverse sheaves$$0\rightarrow i_!\mP_1\rightarrow \mP\rightarrow i_!\mP_2\rightarrow 0.$$ Since $\mP$ is a simple perverse sheaf, either $i_!\mP_1=0$ or $i_!\mP_2=0.$ Applying $i^*,$ we obtain $\mP_1=0$ or $\mP_2=0.$ Thus $i^*\mP$ is a simple perverse sheaf.

    Set $U'=U\cap V$, and let $j':U'\rightarrow V$ be the open embedding. Since the support of $i^*\mP$ is $V,$ we have $j'^*i^*\mP\not=0$. Moreover, $j'^*i^*\mP$ is perverse because $j'$ is an open embedding. Then by \cite[Lemma 3.3.8]{Pramod-2021}, $i^*\mP\simeq j'_{!*}j'^*i^*\mP.$ This gives $$\mP\simeq i_!j'_{!*}j'^*i^*\mP.$$ Denote the closed embedding $U\cap V\rightarrow U$ by $i'.$ By the definition of the functor $j'_{!*},$ $$i_!j'_{!*}j'^*i^*\mP\simeq j_{!*}i'_!i'^*j^*\mP.$$ Finally, $\operatorname{supp}(j^*\mP)\subset V\cap U$ implies that $i'_!i'^*j^*\mP\simeq j^*\mP.$

    This completes the proof.
\end{proof}
\begin{lemma}[Pointwise purity for filtrations in a regular tube]\label{lem:tube-filtration-affine-paving}
Let $k=\overline{\mathbb F}_q$, let $Q$ be a connected acyclic quiver
of affine type with vertex set $Q_0$, and let $\delta$ be its minimal
positive imaginary root. Fix an $Q_0$-graded vector space $V$ and
$x_0\in E_V$ such that $M_{x_0}$ belongs to a non-homogeneous regular
tube $\cT$. Let $m\geq1$, let $J\subseteq\{1,\ldots,m\}$, and let
$\boldsymbol\gamma=(\gamma_1,\ldots,\gamma_m)$ satisfy
\[
  \sum_{a=1}^m\gamma_a=\dim V,
  \qquad \gamma_a=\delta\quad(a\notin J).
\]
Let $\mathcal{F}_{x_0,\boldsymbol\gamma,J}$ be the reduced variety of
$Q_0$-graded flags
\[
  F=(0=V^0\subseteq V^1\subseteq\cdots\subseteq V^m=V)
\]
such that $x_0V^a\subseteq V^a$, $\dim(V^a/V^{a-1})=\gamma_a$ for
all $a$, and $M_{x_0}|_{V^a/V^{a-1}}$ is semisimple regular for
$a\in J$. Here semisimple regular means a direct sum of quasi-simple
regular representations.

Then, for $\ell\ne\operatorname{char}k$, the complex
\[
  R\Gamma(\mathcal{F}_{x_0,\boldsymbol\gamma,J},\overline{\bbq_l})
\]
on a point is pure of weight $0$.
The empty variety is allowed.
\end{lemma}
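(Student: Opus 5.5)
The plan is to show that $\mathcal F:=\mathcal{F}_{x_0,\boldsymbol\gamma,J}$ admits a finite decomposition into locally closed pieces, each defined over a finite extension of $\mathbb F_q$ and isomorphic to an affine space $\mathbb A^{d}$. If $\mathcal F$ is proper, an $\alpha$-partition into affine spaces gives $H^{2i}_c(\mathcal F)\simeq \overline{\bbq_l}(-i)^{\oplus n_i}$ up to roots of unity and $H^{odd}=0$. In general one needs only that every piece contributes pure cohomology of the correct weight, so that the long exact sequences split weight by weight. First I will check that $\mathcal F$ is closed in the partial flag variety of $V$. For $a\in J$, the condition that $M_{x_0}|_{V^a/V^{a-1}}$ be semisimple regular is closed: it says that a regular module of fixed dimension is annihilated by the radical of its tube endomorphism algebra. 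For $a\notin J$ the subquotient is only required to have dimension $\delta$, and Lemma~\ref{regular} already forces it to be regular in $\cT$. Hence $\mathcal F$ is projective, and pointwise purity of weight $0$ of $R\Gamma$ reduces to showing that $H^i(\mathcal F)$ is pure of weight $i$.

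Next I will transport the problem to the cyclic quiver. Every subquotient of $M_{x_0}$ of defect $0$ lies in $\cT$, by Lemma~\ref{regular} and the Hom/Ext separation \eqref{eq:hom-separation}--\eqref{eq:tube-orthogonality}. The exact equivalence $\cT\simeq\nilrep_k C_r$ therefore identifies $\mathcal F$ with a variety of flags of $C_r$-subrepresentations of the nilpotent module $N=M(\pi)$. The dimension constraint $\delta$ becomes the constraint that the subquotient has dimension vector $(1,\dots,1)$, while each $J$-constraint becomes a fixed semisimple dimension vector with a semisimple subquotient. The identification of varieties holds because the subrepresentations of $M_{x_0}$ of the relevant type are exactly the images of submodules in the tube. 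This is a bijection on $k$-points compatible with Frobenius, so it gives equality of point counts over all $\mathbb F_{q^t}$ and of the Frobenius action on cohomology, provided one works with reduced structures.

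The core step is an induction on $m$ that peels off the first step of the flag. Consider the map $\mathcal F\to\{V^1\}$ sending $F\mapsto V^1$, with target the quiver Grassmannian of submodules $U\subseteq N$ of dimension $\gamma_1$, semisimple if $1\in J$. Stratify this target by the isomorphism type of the pair $(U,N/U)$. There are finitely many such types in $\nilrep C_r$ of bounded dimension. On each stratum the fibre is a variety of the same kind for $N/U$ with $m-1$ steps, which is pure by induction. Each stratum is an orbit-type piece: a Grassmannian of submodules with fixed isomorphism types. For cyclic quivers such pieces are known to be affine bundles over products of ordinary Grassmannians, via Hall polynomials and the Cerulli Irelli--Feigin--Reineke-type cell decompositions for nilpotent representations of $C_r$ with fixed sub and quotient type. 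The pieces therefore have pure compactly supported cohomology of Tate type, and the fibration over them is Zariski locally trivial. I will invoke Lemma~\ref{lem:grassmannian-pushforward} for the Grassmannian factors.

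The expected obstacle is combining the pieces. Purity of $H_c$ on each stratum, all of Tate type with even degrees only, gives purity of the total space, because odd compactly supported cohomology vanishes everywhere and the long exact sequences degenerate. So I must verify that each stratum has vanishing odd cohomology and that $H^{2i}_c$ is pure of weight $2i$. If the direct cell structure fails for the non-semisimple dimension-$\delta$ steps, I will instead use the strategy the paper mentions: a contracting $\mathbb G_m$-action on the tube, given by a grading of $C_r$ on which $N$ is graded. The fixed points are then flags of graded pieces, namely products of flag varieties of vector spaces, and a Białynicki-Birula decomposition of the projective variety $\mathcal F$ yields the affine paving. Properness then gives weight $0$.
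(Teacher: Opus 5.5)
\textbf{There is a genuine gap: your core inductive step fails, even within the scope of the lemma.}

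Take a tube of rank $r=2$. In $\nilrep C_2$, let $N_0=\langle e_2,f\rangle$ and $N_1=\langle e_1,e_3\rangle$, with $x'_0\colon e_1\mapsto e_2\mapsto e_3\mapsto 0$ and $f\mapsto 0$. Then $N\simeq S_1[3]\oplus S_0[1]$ has dimension $2\delta$. Take $\boldsymbol\gamma=(\delta,\delta)$ and $J=\emptyset$.
\begin{itemize}
\item Every submodule $U$ of dimension $\delta$ has $U_1=\langle e_3\rangle$. So these $U$ are parametrized by the line $U_0\in\mathbb P(N_0)\simeq\mathbb P^1$, and $\mathcal F\simeq\mathbb P^1$.
\item The choices $U_0=\langle f\rangle$, $U_0=\langle e_2\rangle$ and $U_0=\langle e_2+\epsilon f\rangle$ with $\epsilon\neq0$ give the pairs $(U,N/U)\simeq(S_0\oplus S_1,S_1[2])$, $(S_0[2],S_1\oplus S_0)$ and $(S_0[2],S_1[2])$ respectively.
\item The last stratum is $\mathbb G_m$. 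It has $q-1$ rational points and $H^1_c$ of weight $0$.
\end{itemize}
So strata with fixed isomorphism type of $(U,N/U)$ are not affine bundles over products of Grassmannians, and they can have odd $H_c$. Here $\mathcal F$ is pure only because a connecting map cancels $H^1_c(\mathbb G_m)$ against the two point strata. That is exactly what your ``odd $H_c$ vanishes, so the long exact sequences degenerate'' argument rules out.

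Cerulli Irelli--Feigin--Reineke-type cells decompose whole quiver Grassmannians. They do not decompose strata with prescribed sub- and quotient types, whose Hall numbers (like $q-1$ here) need not have nonnegative coefficients. Two further problems: your induction hypothesis only gives purity of the fibres, not the evenness and Tate type the long exact sequences would need; and you do not justify local triviality of the fibration over a stratum.

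\textbf{The Bia\l ynicki-Birula fallback does not repair this.} It produces affine cells only for smooth projective varieties, and $\mathcal F_{x_0,\boldsymbol\gamma,J}$ is singular in general. For example, take $N\simeq S_0[2]\oplus S_1[2]$ in $C_2$, with $\boldsymbol\gamma=(\delta,\delta)$ and $J=\emptyset$. Then $\mathcal F$ is two projective lines meeting in one point:
\begin{itemize}
\item one line where $U_1=\operatorname{soc}S_0[2]$ and $U_0\subset N_0$ varies;
\item one line where $U_0=\operatorname{soc}S_1[2]$ and $U_1\subset N_1$ varies.
\end{itemize}
On a singular variety the attracting sets are only $\mathbb G_m$-stable cones. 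They need not be affine spaces or have pure $H_c$.

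\textbf{The missing idea, which the paper uses, is to avoid decomposing $\mathcal F$ at all.}
\begin{enumerate}
\item Realize $\mathcal F$ as the fibre over $x'_0$ of the proper map $\pi\colon\widetilde{\mathcal F}_{\boldsymbol\xi,J}\to\bfE_\bfW$. Its source is a \emph{smooth} incidence variety over the whole representation space of $C_r$, non-nilpotent points included. Hence $\cK=R\pi_*\overline{\mathbb Q}_l$ is pure of weight $0$.
\item Restrict $\cK$ to the transverse slice $C'=x'_0+C$. The action $\rho$, built from the cocharacter $\lambda$, contracts $C'$ to $x'_0$.
\item The contraction isomorphism $R\Gamma(C',\cK|_{C'})\simeq\cK_{x'_0}$ has left side of weights $\geq0$ and right side of weights $\leq0$. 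By proper base change the stalk $\cK_{x'_0}$ is $R\Gamma(\mathcal F)$, so it is pure.
\end{enumerate}
This needs no paving and no smoothness of $\mathcal F$.
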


\begin{proof}
We may assume that $\mathcal{F}_{x_0,\boldsymbol\gamma,J}$ is nonempty.
We may also enlarge the finite field of definition so that all the
finite-dimensional data and morphisms constructed below are defined
over it. This replaces Frobenius by a positive power and does not
change the assertion about weights.

Every $\gamma_a$ has defect zero. If $U\subseteq M_{x_0}$ has defect
zero, then $U$ has no preinjective direct summand, by the
Hom-vanishing between preinjective and regular representations.
Additivity of the defect therefore excludes preprojective summands,
so $U$ is regular. Similarly, $M_{x_0}/U$ has no preprojective
summand and has defect zero, hence is regular. Orthogonality of
distinct tubes implies that both objects belong to $\cT$.
Consequently, all terms and successive quotients of the flags under
consideration belong to $\cT$.

Let $r$ be the rank of $\cT$, and let $Q'$ be the cyclic quiver with
vertex set $Q_0'=\mathbb Z/r\mathbb Z$ and arrows $u\longrightarrow u+1$.
Fix an exact $k$-linear equivalence
\[
  \mathscr E:\rep^{\mathrm{nil}}(kQ')\xrightarrow{\sim}\cT,
\]
as in \cite[Section 6.3]{linxiao}, and write
$\mathscr E(M_{x'_0})\simeq M_{x_0}$, where $x'_0$ is nilpotent on
the $I'$-graded space $\bfW=\bigoplus_{u\in Q_0'}\bfW_u$.
For $u\in Q_0'$, let $S_u$ be the image of the vertex simple and put
$\mathbf d_u=\dim S_u$. These vectors satisfy
\[
  \sum_{u\in Q_0'}\mathbf d_u=\delta,
  \qquad
  \langle\mathbf d_u,\mathbf d_v\rangle
      =\delta_{uv}-\delta_{v,u+1}.
\]
They are linearly independent over $\mathbb Q$: a relation
$\sum_u c_u\mathbf d_u=0$ forces $c_v=c_{v-1}$ for every $v$, and
their common value is zero since $\delta\ne0$.
Thus the prescribed dimensions determine unique vectors
$\eta_a\in\mathbb N^{I'}$ such that
\[
  \gamma_a=\sum_{u\in Q_0'}\eta_{a,u}\mathbf d_u.
\]
Their existence follows from nonemptiness of the filtration variety.
In particular, $\eta_a=(1,\ldots,1)$ for $a\notin J$.
Put $\boldsymbol\eta=(\eta_1,\ldots,\eta_m)$.

The equivalence identifies $\mathcal{F}_{x_0,\boldsymbol\gamma,J}$ with
the reduced variety
\begin{equation}\label{eq:notes-cyclic-fiber}
  \left\{
  \begin{array}{l|l}
    F=(0=W^0\subseteq\cdots\subseteq W^m=\bfW)
    &\begin{array}{l}
       \dim(W^a/W^{a-1})=\eta_a,\quad x'_0W^a\subseteq W^a
           \quad(1\leq a\leq m),\\
       x'_0W^a\subseteq W^{a-1}\quad(a\in J)
     \end{array}
  \end{array}
  \right\}.
\end{equation}

Set
\[
  \bfE_\bfW=\bigoplus_{u\in Q_0'}\Hom_k(\bfW_u,\bfW_{u+1}),\qquad
  G_\bfW=\prod_{u\in Q_0'}\GL(\bfW_u),\qquad
  \mathfrak g_\bfW=\bigoplus_{u\in Q_0'}\End_k(\bfW_u).
\]
The case $\bfW=0$ is immediate. Choose a decomposition
$M_{x'_0}=\bigoplus_{i=1}^s M_i$ into indecomposable representations,
with homogeneous Jordan bases
\[
  e_{i,1},\ldots,e_{i,l_i},\qquad
  x'_0e_{i,j}=e_{i,j-1},\qquad e_{i,0}=0.
\]
Write $d_i=\deg e_{i,1}\in Q_0'$, so that
$\deg e_{i,j}=d_i+1-j$. Define
\begin{equation}\label{eq:notes-lambda}
  w_{i,j}=(s+1)(l_i+1-2j)+i-1,
  \qquad \lambda(t)e_{i,j}=t^{w_{i,j}}e_{i,j}.
\end{equation}
Each $e_{i,j}$ is homogeneous, hence $\lambda:G_m\to G_\bfW$ is a
homomorphism of algebraic groups. For $2\leq j\leq l_i$ we have
$w_{i,j-1}-w_{i,j}=2(s+1)$, while $x'_0e_{i,1}=0$. Hence
\begin{equation}\label{eq:notes-x-weight}
  \lambda(t)x'_0\lambda(t)^{-1}=t^{2(s+1)}x'_0.
\end{equation}

We next construct a complement to the orbit tangent space.
Consider
\[
  \psi:\mathfrak g_\bfW\longrightarrow \bfE_\bfW,
  \qquad \psi(h)=hx'_0-x'_0h.
\]
The stabilizer $\Aut_{kQ'}(M_{x'_0})$ is the open set of invertible
elements in $\End_{kQ'}(M_{x'_0})$, and is therefore smooth.
It follows that $T_{x'_0}(G_\bfW x'_0)=\im\psi$.
Let $\widetilde\psi$ denote the same commutator map on
$\End_k(W)$. For $\alpha=(i,j)$ and $\beta=(i',j')$, define the
matrix unit $E_{\alpha,\beta}$ by
\[
  E_{\alpha,\beta}(e_{i',j'})=e_{i,j},
  \qquad E_{\alpha,\beta}(e_{h,b})=0\quad\text{if }(h,b)\ne(i',j').
\]
Thus the first index specifies the output vector and the second
the input vector; equivalently, we use column vectors throughout.
With $\alpha-1=(i,j-1)$ and $\beta+1=(i',j'+1)$, direct computation gives
\begin{equation}\label{eq:notes-commutator}
  \widetilde\psi(E_{\alpha,\beta})
   =E_{\alpha,\beta+1}-E_{\alpha-1,\beta},
\end{equation}
where a matrix unit with an out-of-range position is zero.

For a fixed ordered pair $(i,i')$, put $A=l_i$ and $B=l_{i'}$.
The commutator preserves the rectangular block of maps from the
$i'$th chain to the $i$th chain. In its cokernel, equation
\eqref{eq:notes-commutator} gives
\[
  [E_{(i,j),(i',j')}]
    =[E_{(i,j+1),(i',j'+1)}]
    \quad(j<A,\ j'<B).
\]

The surviving classes are therefore spanned by
$E_{(i,A),(i',b)}$, $1\leq b\leq\min(A,B)$.
They are linearly independent. 
Taking all ordered pairs proves
\[
  \End_k(\bfW)=\im\widetilde\psi\oplus\widetilde C,
  \qquad
  \widetilde C=\operatorname{Span}\{E_{(i,l_i),(i',j')}:
        1\leq i,i'\leq s,\ 1\leq j'\leq\min(l_i,l_{i'})\}.
\]

Give $\End_k(\bfW)$ its $I'$-grading, where a map of degree $d$ sends
$W_u$ into $W_{u+d}$. Every chosen matrix unit is homogeneous,
and $\widetilde\psi$ increases degree by one. In particular, if
$h=\sum_d h_d$ is its homogeneous decomposition, then the degree-one
part of $\widetilde\psi(h)$ is $\psi(h_0)$. Taking the degree-one
part of the preceding direct sum therefore gives
\begin{equation}\label{eq:notes-complement}
  \bfE_\bfW=\im\psi\oplus C,
  \qquad C=\widetilde C\cap \bfE_\bfW.
\end{equation}
Explicitly,
\[
  C=\operatorname{Span}\left\{E_{(i,l_i),(i',j')}:
     \begin{array}{l}
       1\leq i,i'\leq s,\quad 1\leq j'\leq\min(l_i,l_{i'}),\\
       \deg e_{i,l_i}=\deg e_{i',j'}+1\text{ in }I'
     \end{array}\right\}.
\]
In particular, pairs with $i=i'$ are included.
Put $C'=x'_0+C$. The notation $C$ will always denote the linear
complement, and $C'$ the affine slice.

Define an action on the full representation space by
\begin{equation}\label{eq:notes-rho}
  \rho(t)y=t^{2(s+1)}\lambda(t)^{-1}y\lambda(t).
\end{equation}
Equation \eqref{eq:notes-x-weight} gives $\rho(t)x'_0=x'_0$.
For a retained basis vector of $C$, its weight is
\begin{align*}
  2(s+1)+w_{i',j'}-w_{i,l_i}
    &=(s+1)(l_i+l_{i'}+2-2j')+i'-i\\
    &\geq 2(s+1)-(s-1)=s+3>0.
\end{align*}
Thus $C$ is $\rho$-stable, and the action on $C'$ extends to
$\mathbb{A}^1\times C'\to C'$ with $\rho(0)y=x'_0$ for all $y\in C'$.
In particular, it contracts $C'$ to $x'_0$.

Consider the action map
\begin{equation}\label{eq:notes-Phi}
  \Phi:G_\bfW\times C'\longrightarrow \bfE_\bfW,
  \qquad (g,y)\longmapsto gyg^{-1}.
\end{equation}
Its differential at $(1,x'_0)$ is $(h,c)\mapsto\psi(h)+c$,
which is surjective by \eqref{eq:notes-complement}. Since source
and target are smooth, $\Phi$ is smooth at $(1,x'_0)$.
For the action
\[
  t\cdot(g,y)
     =(\lambda(t)^{-1}g\lambda(t),\rho(t)y),
\]
$\Phi$ is equivariant with respect to $\rho$ on $\bfE_\bfW$.
Consequently,
\[
  \mathcal B=\{y\in C':\Phi\text{ is not smooth at }(1,y)\}
\]
is closed, being the inverse image of the non-smooth locus under
$y\mapsto(1,y)$, and is $\rho$-stable by equivariance.
If $\mathcal B$ were nonempty, contraction
would imply $x'_0\in\mathcal B$, a contradiction. Left translation on
$G_\bfW$, together with conjugation on $\bfE_\bfW$, then shows that $\Phi$
is smooth everywhere. Moreover,
$\dim C'=\dim \bfE_\bfW-\dim(\mo_{ x'_0})$ by
\eqref{eq:notes-complement}, so $C'$ is a transverse slice at $x'_0$.

We now consider the incidence projection with no prescribed flag
type. Let
\[
  \mathscr A_m(W)=
    \left\{\boldsymbol\xi=(\xi_1,\ldots,\xi_m)
       \in(\mathbb N^{I'})^m:\sum_a\xi_a=\dim \bfW\right\},
\]
and let $\mathcal{F}_{\boldsymbol\xi}(\bfW)$ be the variety of $I'$-graded flags
$F=(0=W^0\subseteq\cdots\subseteq W^m=\bfW)$ with successive
dimension vectors $\boldsymbol\xi$. $\mathcal{F}_{\boldsymbol\xi}(\bfW)$ is a smooth projective variety. Define
\begin{equation}\label{eq:notes-incidence}
  \widetilde {\mathcal{F}_{\boldsymbol\xi,J}}
   =\left\{(y,F)\in \bfE_\bfW\times\mathcal{F}_{\boldsymbol\xi}(\bfW):
       \begin{array}{l}
         yW^a\subseteq W^a\quad(1\leq a\leq m),\\
         yW^a\subseteq W^{a-1}\quad(a\in J)
       \end{array}\right\},
  \qquad \pi(y,F)=y.
\end{equation}
The stated containments are closed conditions on the universal
subbundles. Hence $ \widetilde {\mathcal{F}_{\boldsymbol\xi,J}}$ is closed in
$\bfE_\bfW\times\mathcal{F}_{\boldsymbol\xi}(\bfW)$, and $\pi:\widetilde {\mathcal{F}_{\boldsymbol\xi,J}}\to \bfE_\bfW$ is
proper. Fix a filtration $0=W^0\subset\cdots \subset W^m=\bfW$ and let $P\subset G_{\bfW}$ be the stablizer of this filtration. It is easy to see that $\widetilde {\mathcal{F}_{\boldsymbol\xi,J}}=G_{\bfW}\times_P \widetilde {F_{\boldsymbol\xi,J}},$ where \begin{equation*}
  \widetilde {F_{\boldsymbol\xi,J}}
   =\left\{y\in \bfE_\bfW:
       \begin{array}{l}
         yW^a\subseteq W^a\quad(1\leq a\leq m),\\
         yW^a\subseteq W^{a-1}\quad(a\in J)
       \end{array}\right\}.
\end{equation*} $\widetilde {F_{\boldsymbol\xi,J}}$ is smooth thus $\widetilde {\mathcal{F}_{\boldsymbol\xi,J}}$ is smooth.

Set
\[
  \cK=R\pi_!\overline{\bbq_l}{}_{\widetilde {\mathcal{F}_{\boldsymbol\xi,J}}}
       =R\pi_*\overline{\bbq_l}{}_{\widetilde {\mathcal{F}_{\boldsymbol\xi,J}}}.
\]
The constant
complex with its natural Weil structure is pure of weight $0$ on smooth variety.
Proper pushforward therefore makes $\cK$ pure of
weight $0$ in the mixed derived category
\cite{PD}; see also
\cite[Example 5.4.5 and Corollary 5.4.9]{Pramod-2021}.
The incidence projection is $G_\bfW$-equivariant.
It is also equivariant for $\rho$, using the action
\[
  t\cdot(y,F)=(\rho(t)y,\lambda(t)^{-1}F)
\]
on $\widetilde {\mathcal{F}_{\boldsymbol\xi,J}}$. Indeed,
$(\rho(t)y)(\lambda(t)^{-1}W^a)
=t^{2(s+1)}\lambda(t)^{-1}(yW^a)$, which preserves both
containment conditions in \eqref{eq:notes-incidence}.
The constant complex is equivariant, and proper base change
therefore gives the $G_\bfW$-equivariance of $\cK$ and its weak
$G_m$-equivariance for $\rho$; cf.\ \cite[Exercise 2.10.2]{Pramod-2021}.
These equivariance isomorphisms are induced from the natural Weil
structure on the constant complex and are Frobenius-compatible.

Let $\iota:C'\hookrightarrow \bfE_\bfW$ be the inclusion, and put
$\cL=\iota^*\cK$. We verify that $\cL$ is pure of weight $0$,
using the argument in the proof of
\cite[Proposition 5.7.3]{Pramod-2021} for the pure complex $\cK$.
Let $p_2:G_\bfW\times C'\to C'$ be the projection.
The $G_\bfW$-equivariance of $\cK$ gives
\begin{equation}\label{eq:notes-slice-purity}
  p_2^*\cL\simeq\Phi^*\cK.
\end{equation}
The right-hand side is pure of weight $0$, since $\Phi$ is smooth.
For completeness, purity descends here without assuming that an
arbitrary closed pullback preserves purity. Write $d_G=\dim G_\bfW$.
Smooth duality gives
\[
  \mathbb D(p_2^*\cL)
     \simeq p_2^*(\mathbb D\cL)[2d_G](d_G).
\]
Both $p_2^*\cL$ and its Verdier dual have weights at most $0$.
The shift and twist $[2d_G](d_G)$ preserve weights, so the same
upper bound holds for $p_2^*(\mathbb D\cL)$.
Pulling these two upper bounds back along
$C'\to G_\bfW\times C'$, $y\mapsto(1,y)$, shows that $\cL$ and
$\mathbb D\cL$ both have weights at most $0$. Thus $\cL$ is pure
of weight $0$.

The inclusion $\iota$ is $G_m$-equivariant, hence $\cL$ is weakly
equivariant for the contracting action on $C'$.
Let $j_0:\{x'_0\}\hookrightarrow C'$ be the inclusion and let
$a_0:C'\to\{x'_0\}$ be the structure map. The contraction
isomorphism gives
\begin{equation}\label{eq:notes-contraction}
  R(a_0)_*\cL\simeq j_0^*\cL;
\end{equation}
see \cite[Theorem 2.10.3]{Pramod-2021}, for the mixed $\ell$-adic formulation.
All maps are defined over the chosen finite field, so this
isomorphism is compatible with Frobenius. Since $\cL$ is pure
of weight $0$, the left-hand side of
\eqref{eq:notes-contraction} has weights at least $0$, whereas
the right-hand side has weights at most $0$. Both sides are
therefore pure of weight $0$.

For $i_{x'_0}:\{x'_0\}\hookrightarrow \bfE_\bfW$, proper base change now
gives
\[
  i_{x'_0}^*\cK
    =j_0^*\cL
    \simeq R\Gamma(\pi^{-1}(x'_0),\overline{\bbq_l}),
\]
which is pure of weight $0$. Thus $R\pi_!\overline{\bbq_l}$ is pointwise pure
of weight $0$ at every nilpotent point. 
The identification with
$\mathcal{F}_{x_0,\boldsymbol\gamma,J}$ proves the assertion.
\end{proof}

We now prove Theorem \ref{pure}: for any $x\in X(c,m)$ and any flag sheaf complex $\mathcal{L}$,
the stalk $\mathcal{L}_x$ is pure. To prove purity, we may extend the field of definition. Given an $\bbF_{q^l}-$point $x,$ we may regard it as an $\bbF_{q^{l'}}-$point, with $l'\geq l.$ We may therefore choose $x$ as an $\bbF_{q^{l'}}-$point so that $x$ decomposes as $x_p\oplus x_I\oplus\oplus_jx_{nh,j}\oplus_ix_{hr,i}$ over the field $\bbF_{q^{l'}}.$
\begin{proof}
For any $(\mathbf{c},\mathbf{m}),$ fix $x\in X(\mathbf{c},\mathbf{m})$. We can choose $V_I,V_P,V_{nh,i},V_{hr,j}$ such that
\[
x=x|_{V_I}\oplus x|_{V_P}
  \oplus\bigoplus_i x|_{V_{nh,i}}
  \oplus\bigoplus_j x|_{V_{hr,j}},
\]
and $M_x|_{V_I}\cong \mathbf{c}_I$, $M_x|_{V_P}\cong \mathbf{c}_P$,
$M_x|_{V_{nh,i}}\cong \mathbf{c}_{nh,i}$, and
$M_x|_{V_{hr,j}}$ is a simple homogeneous regular module. Denote $x_I$ as $x|_{V_I}$ and $x_P$ as $x|_{V_P}.$

 We make the contracting action explicit. Refine the preprojective and
preinjective parts into their isotypic summands:
\[
 V_P=\bigoplus_{a\in A_P}V_{P,a},\qquad
 V_I=\bigoplus_{b\in A_I}V_{I,b},
\]
where
\[
 M_{x|V_{P,a}}\simeq P_a^{\oplus m_a},\qquad
 M_{x|V_{I,b}}\simeq I_b^{\oplus n_b}.
\]
Here $A_P$ and $A_I$ are finite sets of indices, with the conventions
$P_0<P_1<\cdots$ and $\cdots<I_2<I_1$ of Section~1.
Choose an integer
\[
 N>\max\bigl(A_P\cup A_I\cup\{0\}\bigr).
\]
Define a cocharacter $\lambda:\mathbb G_m\longrightarrow G_\nu$ by
\[
\begin{aligned}
 \lambda(t)|_{V_{P,a}}&=t^{N-a}\operatorname{id}_{V_{P,a}},&
 \lambda(t)|_{V_{I,b}}&=t^{b-N}\operatorname{id}_{V_{I,b}},\\
 \lambda(t)|_{V_{\mathrm{nh},i}}&=\operatorname{id}_{V_{\mathrm{nh},i}},&
 \lambda(t)|_{V_{\mathrm{hr},j}}&=\operatorname{id}_{V_{\mathrm{hr},j}}.
\end{aligned}
\]
Since $x$ is block diagonal, $\lambda(t)\cdot x=x$.

1. We construct transversal slices in $\bfE_{V_{nh,i}},\bfE_{V_{hr,j}}$.

We construct complements to the tangent spaces of $\mo_{nh,j}$ and $\mo_{hr,j}$, denoted by $\mathcal{T}_{nh,i},\mathcal{T}_{hr,j}$, respectively.

Using the notation of Section \ref{sec1}, for any non-homogeneous regular module $M_i(\pi)$ in the tube $\cT_i,$ the condition $\Hom_kQ(M_x,M_i(\pi))>0$ is closed in $\bfE_{\nu}.$ It follows that the condition that $M_x$ is regular and has no non-homogeneous direct summands outside $\cT_j$ is open.

Choose a corresponding open subset with $x|_{V_{nh,i}}\in U_{nh,i}$,
such that, for every $x\in U_{nh,i}$, the representation $x$ is regular and has no non-homogeneous direct summands in other tubes, and
\[
\mathrm{G}_{V_{nh,i}}\times U_{nh,i}\longrightarrow \bfE_{V_{nh,i}}
\quad\text{is smooth}.
\] 
 Clearly $U_{nh,i}$ is smooth.
 
We can also choose transversal slices $\mathcal{T}_P$ and $\mathcal{T}_I$ at the points $x_P$ and $x_I$ in $\bfE_{V_{P}}$ and $\bfE_{V_{I}}.$ By \cite[Proposition 5.7.3]{Pramod-2021}, $G_{m}$ defined above actions on $\mathcal{T}_P$ and $\mathcal{T}_I$ with fixed points $x_P$ and $x_I$, respectively. These $G_m$ actions
extend to $\mathbb A$-actions.

Set $\bfE_{V,V'}:=\oplus_{h\in H}\Hom_k(V_{s(h)},V'_{t(h)}).$

We can choose a complement to the tangent space of $\mo_x$ at $x$, stable under $G_m\subset\bfG_{\nu}$,
as a subvariety $\mathcal{T}$ of
\[
\begin{aligned}
&\prod\mathcal{T}_{nh,i}\times\prod\mathcal{T}_{hr,j}
 \times\prod_j \bfE_{V_I,V_{hr,j}}\times\prod_i \bfE_{V_I,V_{nh,i}}\\
&\qquad\times\prod_j \bfE_{V_{hr,j},V_P}
 \times\prod_i \bfE_{V_{nh,i},V_P}\times \bfE_{V_I,V_P}\times \mathcal{T}_P\times \mathcal{T}_I.
\end{aligned}
\]
 $\mathbb{A}\supset G_m$ acts trivially on $\prod\mathcal{T}_{nh,i}\times\prod\mathcal{T}_{hr,j}\times x_P\times x_I$ and $\lambda(t)$ acts on $$\prod_j \bfE_{V_I,V_{hr,j}}\times\prod_i \bfE_{V_I,V_{nh,i}}\times\prod_j \bfE_{V_{hr,j},V_P}
 \times\prod_i \bfE_{V_{nh,i},V_P}\times \bfE_{V_I,V_P}\times \mathcal{T}_P\times \mathcal{T}_I,$$ satisfying $\lambda(0)x=\lim_{t\rightarrow0}(\lambda(t)(x'))={0}\times x_P\times x_I,$ for any $x',$ and $G_m\subset \bfG_{\nu}.$

We can choose an open subvariety $U_{hr}\subset\prod\mathcal{T}_{hr,j}$
such that $U_{hr}\subset X(0,\mathbf{m})$ and denote $\mathcal{T}_{hr,j}\cap X(0,1)$ as $U_{hr,j}.$ The subvariety
\[
\begin{aligned}
&\prod U_{nh,i}\times U_{hr}
 \times\prod_j \bfE_{V_I,V_{hr,j}}\times\prod_i \bfE_{V_I,V_{nh,i}}\times \mathcal{T}_P\times \mathcal{T}_I\\
&\qquad\times\prod_j \bfE_{V_{hr,j},V_P}
 \times\prod_i \bfE_{V_{nh,i},V_P}\times \bfE_{V_I,V_P}\cap\mathcal{T}
\end{aligned}
\]
is open in $\mathcal{T}$ and stable under $G_m\subset\bfG_{\nu}$, where the $G_m$ action
extends to an $\mathbb A$-action.

We can also choose an open subvariety $U\subset\mathcal{T}$ such that,
for any $x\in U$, $\Hom(M_x|_{V_{nh,i}},M_x|_{V_{nh,j}})=0$ for $i\ne j$ and $\Hom(M_x|_{V_{nh,i}},M_x|_{V_{hr,k}})=0,$
and such that this subvariety is stable under $G_m$ and
\[
\bfG_{\nu}\times U\longrightarrow \bfE_\nu
\quad\text{is a smooth morphism}.
\]
Finally, we choose
\[
\begin{aligned}
\widetilde U=U\cap\Bigl(
 &\bigl(U\cap(x_P\times x_{I}\times\prod U_{nh,i}\times U_{hr}\times\{0\})\bigr)
 \times\prod_j \bfE_{V_I,V_{hr,j}}\times\prod_i \bfE_{V_I,V_{nh,i}}\\
 &\times\prod_j \bfE_{V_{hr,j},V_P}
 \times\prod_i \bfE_{V_{nh,i},V_P}\times \bfE_{V_I,V_P}\times \mathcal{T}_P\times \mathcal{T}_I\Bigr)
\end{aligned}
\]
which has the $G_m$ action that extends to an $\mathbb A$-action and is stable under Frobenius morphism $\operatorname{Fr}_{q^{l'}}.$
Moreover, $U\cap(x_P\times x_{I}\times\prod U_{nh,i}\times U_{hr}\times\{0\})$ is open in
$x_P\times x_{I}\times\prod U_{nh,i}\times U_{hr}\times\{0\}$.

$\widetilde U$ is open in $U$.
Using the morphisms
\[
\widetilde U\xrightarrow{i}\bfG_{\nu}\times\widetilde U
\xrightarrow{\phi}\bfE_\nu,
\]
where $\phi$ is smooth, together with the isomorphism
\[
i^*\operatorname{For}^{\bfG_{\nu}}[-\dim\bfG_{\nu}]
\cong i^!\operatorname{For}^{\bfG_{\nu}}[\dim\bfG_{\nu}](\dim\bfG_{\nu})
\]
we obtain
\[
\Rightarrow\quad \operatorname{For}^{\bfG_{\nu}}i^*\phi^*\mathcal{L}\quad\text{is still pure}.
\]
Let
\[
U_0=U\cap(x_P\times x_I\times\prod U_{nh,j}\times U_{hr}\times\{0\})
\xrightarrow{u_0}\bfE_\nu
\]
be the $G_m$ fixed point set. Clearly, it is smooth.
By \cite[Theorem 2.10.3]{Pramod-2021},
$u_0^*\operatorname{For}^{\bfG_{\nu}}\mathcal{L}$ is pure.
In fact, for any pure $\bfG_{\nu}-$equivariant sheaf complex $F$,
$u_0^*\operatorname{For}^{\bfG_{\nu}} F$ is pure. The complex
\[
u_0^*\operatorname{For}^{\bfG_{\nu}}\mathcal{L}=\mathcal{L}_1\oplus\mathcal{L}_2\oplus\cdots\oplus\mathcal{L}_n,
\]
is semisimple, with its summands ordered by support. We will show the order and what $L_i$ is in the followings. 

Equip $U_0$ with the stratification
$\bigsqcup_{(c,m)}U_0\cap Y(c,m)=U_0$,
and write $U_0\cap Y(c,m)=U_0^{c,m}$,
$U_0\cap X(c,m)=\widetilde U_0^{c,m}$. It could be equiped with an order $\bigsqcup_{i=1}^nU_0^{c_i,m_i},$ such that $U_0^{c_s,m_s}$ is open in $\bigsqcup_{i=s}^nU_0^{c_i,m_i},$ for any $s=1,\cdots,n.$ 

 For any $(c,m)\in \tilde{\cG}_{\nu},$ if $Y(c,m)\cap U_0\not=\emptyset,$ then $c_P\cong x_P,$ and $c_I\cong x_I.$  

$u_0^*\operatorname{For}^{\bfG_{\nu}}\mathcal{L}$ is semisimple thus could be written as $\oplus_l S_l,$ where $S_l$ is a shift of simple perverse sheaf. Denote $C_i=\{l|\operatorname{supp}(S_l)\cap (\cup_{s=1}^{i-1}U_0^{c_s,m_s})=\emptyset, \operatorname{supp}(S_l)\cap U_0^{c_i,m_i}\not=\emptyset\}.$ Let $L_i=\oplus_{l\in C_i}S_l.$ It is clearly that $U_0^{c_i,m_i}\cap \operatorname{supp}(L_i)$ is open in $\operatorname{supp}(L_i).$

Use the notation
\[
\begin{gathered}
U_0^{c,m}\xrightarrow{j_0^{c,m}}\bfE_\nu,\qquad
\widetilde U_0^{c,m}\xrightarrow{\widetilde j_0^{c,m}}\bfE_\nu,\\
Y(c,m)\xrightarrow{j^{c,m}}\bfE_\nu,\qquad
X(c,m)\xrightarrow{\widetilde j^{c,m}}\bfE_\nu.
\end{gathered}
\]
Write $a^{c,m}:U_0^{c,m}\hookrightarrow U_0$ for the internal inclusion.

Each restriction functor $j_0^{c,m*}$ can be expressed using the composition
\[
U_0^{c,m}\xrightarrow{i^{c,m}}Y(c,m)\xrightarrow{j^{c,m}}\bfE_\nu.
\]

For $\mathcal{F}$ satisfying Theorem \ref{local}, the restriction $j_0^{c,m*}\mf$ is generated by $i^{c,m*}(IC(X(c,m),L_{\chi})|_{Y(c,m)})\simeq j_0^{c,m*}IC(X(c,m),L_{\chi})$ in $\cD^b(U_0^{c,m}).$ 
Moreover, $u_0^*IC(X(c,m),L_{\chi})$ is semisimple in $\cD^b(U_0),$ since $u_0^*$ preserves purity. We have $\operatorname{supp}(u_0^*IC(X(c,m),L_{\chi}))\subset \overline{Y(c,m)}\cap U_0$. Since $U_0^{c,m}$ is open in $\overline{Y(c,m)}\cap U_0,$ the restriction $j_0^{c,m*}IC(X(c,m),L_{\chi})$ is semisimple in $\cD^b(U_0^{c,m}).$ If $j_0^{c,m*}\mf$ is semisimple in $\cD^b(U_0^{c,m}),$ each of its direct summands occurs as a direct summand of $j_0^{c,m*}IC(X(c,m),L_{\chi})$ for some $\chi.$

Moreover, 
\[
a^{c,m*}u_0^* \Ind\left(
 IC(O_{x_P},\overline{\bbQ_l})\boxtimes\prod IC(X(0,1),\overline{\bbQ_l})
\boxtimes\prod IC(O_{c_{nh,j}},\overline{\bbQ_l})\boxtimes IC(O_{x_I},\overline{\bbQ_l})
\right)
\quad\text{is still pure of weight }0,
\]
since $U_0^{c,m}$ is open in $U_0\cap \operatorname{supp}(\Ind\left(
 IC(O_{x_P},\overline{\bbQ_l})\boxtimes\prod IC(X(0,1),\overline{\bbQ_l})
\boxtimes\prod IC(O_{c_{nh,j}},\overline{\bbQ_l})\boxtimes IC(O_{x_I},\overline{\bbQ_l})
\right))$
and is therefore semisimple in $\cD^b(U^{c,m}_0).$
Each simple perverse sheaf occurring above, up to a shift, has the form $a^{c,m*}P,$ where $P$ is a simple perverse direct summand, up to a shift, of $$u_0^* \Ind\left(
 IC(O_{x_P},\overline{\bbQ_l})\boxtimes\prod IC(X(0,1),\overline{\bbQ_l})
\boxtimes\prod IC(O_{c_{nh,j}},\overline{\bbQ_l})\boxtimes IC(O_{x_I},\overline{\bbQ_l})
\right).$$

We now consider the case $(c_1,m_1).$ The map $a^{c_1,m_1}$ is an open embedding.
It follows that ${j_0^{c_1,m_1}}^*\mathcal L$ is semisimple and
that its simple summands occur in $${a^{c_1,m_1}}^*u_0^* \Ind\left(
 IC(O_{x_P},\overline{\bbQ_l})\boxtimes\prod IC(X(0,1),\overline{\bbQ_l})
\boxtimes\prod IC(O_{c_{nh,j}^{1}},\overline{\bbQ_l})\boxtimes IC(O_{x_I},\overline{\bbQ_l})
\right).$$
We can therefore choose a basis consisting of shifts of simple perverse sheaves in $L_1,$ with respect to which the matrix of the Weil structure is upper triangular. Moreover, let $P$ be a simple perverse direct summand, up to a shift, of $L_1,$ such that ${a^{c_1,m_1}}^*P$ is not $0$. This restriction occurs in $${a^{c_1,m_1}}^*u_0^* \Ind\left(
 IC(O_{x_P},\overline{\bbQ_l})\boxtimes\prod IC(X(0,1),\overline{\bbQ_l})
\boxtimes\prod IC(O_{c_{nh,j}^{1}},\overline{\bbQ_l})\boxtimes IC(O_{x_I},\overline{\bbQ_l})
\right),$$ up to shift.

By Lemma \ref{simp}, $P$ is a direct summand(up to shift) of $$u_0^* \Ind\left(
 IC(O_{x_P},\overline{\bbQ_l})\boxtimes\prod IC(X(0,1),\overline{\bbQ_l})
\boxtimes\prod IC(O_{c_{nh,j}^{1}},\overline{\bbQ_l})\boxtimes IC(O_{x_I},\overline{\bbQ_l})
\right),$$

which is a direct summand, up to a shift and twist $[s](s/2)$ for some $s$, of
\[
u_0^*\Ind\left(
 IC(O_{x_P},\overline{\bbQ_l})\boxtimes\prod IC(X(0,1),\overline{\bbQ_l})
\boxtimes\prod IC(O_{nh,\mathrm{semi}},\overline{\bbQ_l})\boxtimes IC(O_{x_I},\overline{\bbQ_l})
\right)
\]

where $O_{nh,\mathrm{semi}}$ is the orbit of a semisimple non-homogeneous module
in a single tube. The argument uses the assertion that
$IC(O_{c_{nh,j}^{1}},\overline{\mathbb Q}_l)[s](s/2)$ occurs as a direct
summand of an iterated induction of such semisimple-orbit IC complexes by \cite[Section~6.9]{Lusztig1992}, \cite[proof of Proposition~2.7]{StroppelWebster2011} and \cite[Corollary~3.2]{DengDuXiao2007} .
The operation $[s](s/2)$ preserves purity and weight.
On a Frobenius-stable simple constituent, two pure Weil normalizations
of the same weight differ by a scalar of absolute value $1$. By the semisimplicity of $u_0^*\ml,$ $$u_0^*\Ind\left(
 IC(O_{x_P},\overline{\bbQ_l})\boxtimes\prod IC(X(0,1),\overline{\bbQ_l})
\boxtimes\prod IC(O_{c_{nh,j}^{1}},\overline{\bbQ_l})\boxtimes IC(O_{x_I},\overline{\bbQ_l})
\right)$$ and $$u_0^*\Ind\left(
 IC(O_{x_P},\overline{\bbQ_l})\boxtimes\prod IC(X(0,1),\overline{\bbQ_l})
\boxtimes\prod IC(O_{nh,\mathrm{semi}},\overline{\bbQ_l})\boxtimes IC(O_{x_I},\overline{\bbQ_l})
\right),$$ we can choose a basis consisting of shifts of simple perverse sheaves, ordered so that the matrix of the Weil structure is upper triangular. All the complexes have weight $0,$ so the Weil structures on the same shifted simple perverse sheaf in these complexes differ only by a scalar of absolute value $1.$ Thus, if for any $(c,m),$ $$\Ind\left(
 IC(O_{x_P},\overline{\bbQ_l})\boxtimes\prod IC(X(0,1),\overline{\bbQ_l})
\boxtimes\prod IC(O_{nh,\mathrm{semi}},\overline{\bbQ_l})\boxtimes IC(O_{x_I},\overline{\bbQ_l})
\right)$$ is pointwise pure on $U_0\cap X(c,m),$ then $L_1$ with the induced Weil structure is pointwise pure on $U_0\cap X(c,m).$  

More precisely, since $L_1$ is semisimple, we can choose a basis consisting of shifts of simple perverse sheaves with respect to which the matrix of its Weil structure is upper triangular as a restriction on $u_0^*\ml.$ It suffices to prove pointwise purity for the Weil structure restricted to each such shifts of simple perverse sheaf, which occuring in \[
u_0^*\Ind\left(
 IC(O_{x_P},\overline{\bbQ_l})\boxtimes\prod IC(X(0,1),\overline{\bbQ_l})
\boxtimes\prod IC(O_{nh,\mathrm{semi}},\overline{\bbQ_l})\boxtimes IC(O_{x_I},\overline{\bbQ_l})
\right),
\]  up to shift and tate twist $[l](\frac{l}{2}).$
This structure differs from the corresponding Weil structure only by a scalar of absolute value $1$. 

Indeed, for any $x'\in U_0\cap X(c',m')$ for any $(c',m')$, let $i_{x'}:x'\hookrightarrow \bfE_\nu$ be the inclusion. Then
\[
i_{x'}^*\Ind\left(
 IC(O_{x_P},\overline{\bbQ_l})\boxtimes\prod IC(X(0,1),\overline{\bbQ_l})
\boxtimes\prod IC(O_{nh,\mathrm{semi}},\overline{\bbQ_l})\boxtimes IC(O_{x_I},\overline{\bbQ_l})
\right)
\]
is pure of weight $0.$ The proof is following.

By Lemma \ref{regular}, in this case we only need to consider the regular part. 
Any regular module with dimension $\delta$ satisfies all direct summands of it in the same tube and $\overline{\mo_{nh,semi}}\cap \mo_M=\emptyset,$ if $O_M$ is the orbit of any regular module other than this $\mo_{nh,semi}.$ Then apply Lemma \ref{lem:tube-filtration-affine-paving} and deal with multiple nonhomogeneous tubes by \cite[Corollary 5.4.8]{Pramod-2021}, that if $\mathcal{F}$ and $\mathcal{G}$ are pure of weight $0$, $\mathcal{F}\boxtimes\mathcal{G}$ is pure of weight $0.$ And $\mathcal{H}$ is pure of weight $0,$ if $\mathcal{F}$ and $\mathcal{G}$ are
pure of weight $0$ and $\mathcal{F}\rightarrow \mathcal{H}\rightarrow\mathcal{G}\xrightarrow{+1}.$ 

We have the proof.

Thus $\mathcal{L}_1$ can be written as a sum of summands of
\[
u_0^*\Ind\left(
 IC(O_{x_P},\overline{\bbQ_l})\boxtimes\prod IC(X(0,1),\overline{\bbQ_l})
\boxtimes\prod IC(O_{nh,\mathrm{semi}},\overline{\bbQ_l})\boxtimes IC(O_{x_I},\overline{\bbQ_l})
\right),
\]
and $\mathcal{L}_1$ is pointwise pure on $U_0\cap X(c,m)$ for any $(c,m).$

Moreover, to study $\Ind\left(
 IC(O_{x_P},\overline{\bbQ_l})\boxtimes\prod IC(X(0,1),\overline{\bbQ_l})
\boxtimes\prod IC(O_{nh,\mathrm{semi}},\overline{\bbQ_l})\boxtimes IC(O_{x_I},\overline{\bbQ_l})
\right)|_{Y(c,m)},$ for $(c,m)$ such that $Y(c,m)\cap U_0\not=\emptyset,$ by Proposition \ref{homext} and \ref{open}, it suffices to consider 
\[
\left\{(x,F)\ \middle|\
\begin{aligned}
&x\in Y(c,m),\\
&F=(0=V_0\subset V_1=V_{2,0}\subset V_{2,1}\subset\cdots
 \subset V_{2,t}=V_{3,0}\\
&\hspace{35mm}\subset V_{3,1}\subset\cdots\subset V_{3,m_1}\subset V),\\
&x(F)\subset F,\qquad M_x|_{V_1/V_0}\cong x_I,\qquad
 \ddim(V_{3,j}/V_{3,j-1})=\delta,\\
&M_x|_{V_{2,i}/V_{2,i-1}}\cong M_{nh,semi},\\
&M_x|_{V/V_{3,r}}\cong x_P
\end{aligned}\right\}\rightarrow Y(c,m) .
\]
By Remark \ref{rem1}, Theorem \ref{local} can apply to this morphism.

We can regard $U_0\cap X(c,m)$ as
\[
U_0\cap\left(
x_P\times x_I\times\prod\bigl(U_{nh,j}\cap X(c_{nh,j},m_j)\bigr)
\times(U_{hr}\cap X(0,\mathbf{m}))\right),
\]
which is open in $
x_P\times x_I\times\prod\bigl(U_{nh,j}\cap X(c_{nh,j},m_j)\bigr)
\times(U_{hr}\cap X(0,\mathbf{m})).$

$U_{hr}\cap X(0,m)$ is smooth.

And regard $U_0\cap Y(c,m)$ as
\[
U_0\cap\left(
x_P\times x_I\times\prod\bigl(U_{nh,j}\cap Y(c_{nh,j},m_j)\bigr)
\times(U_{hr}\cap X(0,\mathbf{m}))\right),
\]
which is open in $
x_P\times x_I\times\prod\bigl(U_{nh,j}\cap Y(c_{nh,j},m_j)\bigr)
\times(U_{hr}\cap X(0,\mathbf{m})).$

$U_{hr}\subset X(0,\mathbf{m})$ is smooth.

Using the smooth morphism $\mathrm{G}_{V_{nh,i}}\times U_{nh,i}\longrightarrow \bfE_{V_{nh,i}},$ we see that, for every $x\in U_{nh,i},$ we have $T_xU_{nh,i}+T_x\mo_x=T_x\bfE_{V_{nh,i}}.$

$U_{nh,j}\cap X(c_{nh,j},m_j)$ and $U_{nh,j}\cap Y(c_{nh,j},m_j)$ are smooth with dimension $\dim U_{nh,j}+\dim X(c_{nh,j},m_j)-\dim \bfE_{V_{nh,j}}$ because the intersections are transverse and Remark \ref{ys}.
Thus $U_0\cap X(c,m)$ and $U_0\cap Y(c,m)$ are smooth with the same dimension.

For $\underline{\lambda}=(\lambda_1,\cdots,\lambda_t),$ consider the proper morphism
\[
W_{c,\underline{\lambda}}=\left\{(x,F)\ \middle|\
\begin{aligned}
&x\in U_0\cap Y(c,m),\\
&F=(0=V_0\subset V_1=V_{2,0}\subset V_{2,1}\subset\cdots
 \subset V_{2,s}=V_{3,0}\\
&\hspace{35mm}\subset V_{3,1}\subset\cdots\subset V_{3,t}\subset V),\\
&x(F)\subset F,\,M_x|_{V_1/V_0}\cong x_I,\qquad
 M_x|_{V_{2,i}/V_{2,i-1}}\cong c_{nh,i},\\
&M_x|_{V_{3,j}/V_{3,j-1}}\text{ is homogeneous regular of dimension }\lambda_j\delta,\\
&M_x|_{V/V_{3,t}}\cong x_P
\end{aligned}\right\}\xrightarrow{t_{c,\underline{\lambda}}}U_0\cap Y(c,m) .
\]

Applying Theorem \ref{local} to $$\Ind\left(
 IC(O_{x_P},\overline{\bbQ_l})\boxtimes\prod IC(X(0,1),\overline{\bbQ_l})
\boxtimes\prod IC(O_{nh,\mathrm{semi}},\overline{\bbQ_l})\boxtimes IC(O_{x_I},\overline{\bbQ_l})
\right)|_{Y(c,m)},$$ we find that $$j_0^{c,m*}\Ind\left(
 IC(O_{x_P},\overline{\bbQ_l})\boxtimes\prod IC(X(0,1),\overline{\bbQ_l})
\boxtimes\prod IC(O_{nh,\mathrm{semi}},\overline{\bbQ_l})\boxtimes IC(O_{x_I},\overline{\bbQ_l})
\right)$$ is generated by the objects ${t_{c,\underline{\lambda}}}_!\overline{\bbq_l}$ in $\cD^b(U_0^{c,m}).$

We claim that $W_{c,\underline{\lambda}}$ is smooth.

To prove the claim, we regard it as an open subset of
\[
\left\{(x,F)\ \middle|\
\begin{aligned}
&x\in x_P\times x_I\times\prod\bigl(U_{nh,j}\cap Y(c_{nh,j},m_j)\bigr)\times U_{hr},\\
&F=(0=V_0\subset V_1=V_{2,0}\subset V_{2,1}\subset\cdots
 \subset V_{2,s}=V_{3,0}\\
&\hspace{35mm}\subset V_{3,1}\subset\cdots\subset V_{3,t}\subset V),\\
&x(F)\subset F,\,M_x|_{V_1/V_0}\cong x_I,\qquad
 M_x|_{V_{2,i}/V_{2,i-1}}\cong c_{nh,i},\\
&M_x|_{V_{3,j}/V_{3,j-1}}\text{ is homogeneous regular of dimension }\lambda_j\delta,\qquad
 M_x|_{V/V_{3,t}}\cong x_P,\\
&\Hom(M_x|_{V_{nh,j}},M_x|_{V_{nh,j'}})=0,\quad j\ne j',\\
&\Hom(M_x|_{V_{nh,j}},M_x|_{\bigoplus V_{hr,i}})=0
\end{aligned}\right\}.
\]
On this variety, for $j\not=j',i\not=i'$, the representations $M_x|_{V_{nh,j}},$ $M_x|_{V_{nh,j'}},$ $M_x|_{V_{hr,i}}$ and $M_x|_{V_{hr,i'}}$ have no direct summands in the same tube.

When they have no direct summands in the same tube, stratifying by $\ddim(V_{nh,j}\cap V_{3,i})$ and $\ddim(V_{hr,j}\cap V_{3,i})$, we obtain a decomposition
in which each stratum is an open and closed subvariety.

This stratum is isomorphisc to some form \[
\begin{aligned}&\left\{(x,F)\ \middle|\
\begin{aligned}
&x\in U_0\cap(x_P\times x_I\times\prod U_{nh,j}\cap Y(c_{nh,j},m_j)\times \prod U_{hr,j}),\\
&F=(0=V_0\subset V_1\subset V^1_{2,0}\subset\cdots
 \subset V^1_{2,m_1’}\subset V^2_{2,0}\subset\cdots
 \subset V^2_{2,m_2’}\subset\cdots\\&\subset V^s_{2,0}\subset\cdots
 \subset V^s_{2,m_s’}= V_{3,0}\subset\cdots \subset V_{3,\mathbf{m}}\subset V)\\& x(F)\subset F, V_1=V_I,\, V_{2,m_i'}^i=V_I\oplus(\bigoplus_{j\leq i}V_{nh,j}),\, V_{3,i}=V_I\oplus(\bigoplus_j V_{nh,j})\oplus(\bigoplus_{s\leq i}V_{hr,s}),\\
&M_x|_{V_1/V_0}\cong x_I,\,M_x|_{V/V_{3,\mathbf{m}}}\cong x_P,\,M_x|_{V^1_{2,0}/V_1}\cong c_{nh,1},\,M_x|_{V^i_{2,0}/V^{i-1}_{2,m_{i-1}'}}\cong c_{nh,i},\,s\geq i>1\\
 &M_x|_{V^j_{2,i}/V^j_{2,i-1}}\text{ is homogeneous regular of dimension } \mu^j_i\delta,\, m_{i}'\geq i\geq 1, x|_{V_{3,j}/V_{3,j-1}}\in U_{hr,j},\, 0<j\leq \mathbf{m}.
\end{aligned}\right\}\\ &\end{aligned}
\]

Now we only need to consider
\[
\begin{aligned}&\prod\left\{(x,F)\ \middle|\
\begin{aligned}
&x\in U_{nh,j}\cap Y(c_{nh,j},m_j),\\
&F=(0=V_1\subset V_{2,0}\subset V_{2,1}\subset\cdots
 \subset V_{2,m_j’}=V_{nh,j}),x(F)\subset F,\,\\
&M_x|_{V_{2,0}}\cong c_{nh,j},\qquad
 M_x|_{V_{2,i}/V_{2,i-1}}\text{ is homogeneous regular of dimension } \mu^j_i\delta
\end{aligned}\right\}\\ &\times \prod U_{hr,j}\end{aligned}
\]
since the stratum we construct is isomorphisc to an open subset in it.

It is clearly that $U_{hr,j}=X(0,1)\cap \cT_{hr,j}$ is smooth.

And $$\left\{(x,F)\ \middle|\
\begin{aligned}
&x\in Y(c_{nh,j},m_j),\\
&F=(0=V_1\subset V_{2,0}\subset V_{2,1}\subset\cdots
 \subset V_{2,m_j'}=V_{nh,j}),x(F)\subset F,\,\\
&M_x|_{V_{2,0}}\cong c_{nh,j},\qquad
 M_x|_{V_{2,i}/V_{2,i-1}}\text{ is homogeneous regular of dimension } \mu^j_i\delta
\end{aligned}\right\}$$ is smooth by the same way in Proposition \ref{prop:density}.

The intersection of smooth varieties
\[
\begin{aligned}
&\left\{(x,F)\ \middle|\
\begin{aligned}
&x\in Y(c_{nh,j},m_j),\\
&F=(0=V_1\subset V_{2,0}\subset V_{2,1}\subset\cdots
 \subset V_{2,m_j'}=V_{nh,j}),x(F)\subset F,\,\\
&M_x|_{V_{2,0}}\cong c_{nh,j},\qquad
 M_x|_{V_{2,i}/V_{2,i-1}}\text{ is homogeneous regular of dimension } \mu^j_i\delta
\end{aligned}\right\}\\
&\qquad\cap U_{nh,j}\times\mathcal F_{c_{nh,j},\mu^j},
\end{aligned}
\]
where $\mathcal F_{c_{nh,j},\mu^j}$ is the smooth variety 
\[
\left\{F=(0=V_{-1}\subset V_0\subset V_1\subset\cdots\subset V_{m_j'}=V_{nh,j})\ \middle|\
\begin{aligned}
&\ddim V_i/V_{i-1}=\mu^j_i\delta,\,0<i\leq m_j'\\
&\ddim V_0/V_{-1}=\ddim c_{nh,j}
\end{aligned}\right\},
\]
 is transverse in $\bfE_{V_{nh,j}}\times\mathcal F_{c_{nh,j},\mu^j}$. The proof is similar to the proof that $U_{nh,j}\cap Y(c_{nh,j},m_j)$ is a transversal intersection. The intersection is therefore smooth, which proves the claim. 

More precisely, we can compute the dimension of
\[
\begin{aligned}&\prod\left\{(x,F)\ \middle|\
\begin{aligned}
&x\in U_{nh,j}\cap Y(c_{nh,j},m_j),\\
&F=(0=V_1\subset V_{2,0}\subset V_{2,1}\subset\cdots
 \subset V_{2,m_j'}=V_{nh,j}),x(F)\subset F,\,\\
&M_x|_{V_{2,0}}\cong c_{nh,j},\qquad
 M_x|_{V_{2,i}/V_{2,i-1}}\text{ is homogeneous regular of dimension } \mu^j_i\delta
\end{aligned}\right\}\\ &\times \prod U_{hr,j}\end{aligned}
\]
which is $\sum_{j}\dim(U_{nh,j}\cap X(c_{nh,j},m_j))+ \dim U_{hr},$ the same as the dimension of $U_0\cap X(c,m).$
Thus $\dim W_{c.\lambda}=\dim U_0\cap X(c,m).$

By Remark \ref{lisse},
\[
\widetilde j_0^{c,m*}\Ind\left(
 IC(O_{x_P},\overline{\bbQ_l})\boxtimes\prod IC(X(0,1),\overline{\bbQ_l})
\boxtimes\prod IC(O_{nh,\mathrm{semi}},\overline{\bbQ_l})\boxtimes IC(O_{x_I},\overline{\bbQ_l})
\right)
\]
is a lisse sheaf complex. Since the variety is smooth, pointwise purity implies purity by \cite[6.2.5]{PD}.

We now only need to consider the connected component $W$ of
\[
\left\{(x,F)\ \middle|\
\begin{aligned}
&x\in U_0\cap Y(c,m),\\
&F=(0=V_0\subset V_1=V_{2,0}\subset V_{2,1}\subset\cdots
 \subset V_{2,s}=V_{3,0}\\
&\hspace{35mm}\subset V_{3,1}\subset\cdots\subset V_{3,t}\subset V),\\
&x(F)\subset F,\,M_x|_{V_1/V_0}\cong c_I,\qquad
 M_x|_{V_{2,i}/V_{2,i-1}}\cong c_{nh,i},\\
&M_x|_{V_{3,j}/V_{3,j-1}}\text{ is homogeneous regular of dimension }\lambda_j\delta,\qquad
 M_x|_{V/V_{3,t}}\cong c_P
\end{aligned}\right\}.
\]
It has the same dimension as $U_0\cap X(c,m)$.

The morphism
$W\xrightarrow{\phi}U_0\cap Y(c,m)$ is small respect to $U_0\cap X(c,m)$ for the following reasons. 

Properness follows from the construction. And for any $x\in U_0\cap Y(c,m),$ $j\not=j',$ $i\not=i',$ $M_x|_{V_{nh,j}},$ $M_x|_{V_{nh,j'}},$ $M_x|_{V_{hr,i}}$ and $M_x|_{V_{hr,i'}}$ have no direct summands in the same tube. Then $U_0\cap X(c,m)= U_0\cap (x_p\times x_I\times \Pi_jX(c_{nh,j},m_j)\times U_{hr}).$
\[
\forall y\in U_0\cap X(c,m),\quad
\phi^{-1}(y)\text{ is a finite set}.
\]
$\exists$ a stratification $X(c_{nh.j},m_j)\sqcup (\bigsqcup Y_{nh,j,t})$ of $Y(c_{nh,j},m_j)$ as in Lemma \ref{small} and \cite[Lemma 2.5]{WX2026}, such that
\[
\dim (X(c_{nh.j},m_j)\cap U_{nh,j})-\dim (Y_{nh,j,t}\cap U_{nh,j})=\dim(X(c_{nh.j},m_j))-\dim(Y_{nh,j,t}),
\]
because $U_{nh,j}$ is transverse to both $Y_{nh,j,t}$ and $X(c_{nh.j},m_j).$ And for any $Y_{nh,j,t}$ other than $X(c_{nh.j},m_j),$  $\dim (Y_{nh,j,t}\cap U_{nh,j})<\dim (X(c_{nh.j},m_j)\cap U_{nh,j}).$ Morever, $$U_0\cap Y(c,m)=(U_0\cap X(c,m))\cup \bigcup_{finite} (U_0\cap (x_p\times x_I\times \Pi_{j}(Y_{nh,j,t_j}\cap U_{nh,j}) \times U_{hr})).$$ By calculating the dimension, $U_0\cap X(c,m)$ is dense in $U_0\cap Y(c,m).$

Applying Lemma \ref{small} and \cite[Lemma 2.5]{WX2026}, for $y\in U_0\cap (x_p\times x_I\times \Pi_{j}(Y_{nh,j,t_j}\cap U_{nh,j}) \times U_{hr}),$ when there exist $j$ such that $Y_{nh,j,t_j}\not=X(c_{nh,j},m_j),$ $\dim \phi^{-1}(y)<\frac{1}{2}\sum_j (\dim X(c_{nh.j},m_j)- \dim Y_{nh,j,t}),$ since $M_x|_{V_{nh,j}}$ and $M_x|_{V_{nh,j‘}}$ have no direct summands in the same tube for different $j$ and $j'.$

Thus the morphism
$W\xrightarrow{\phi}U_0\cap Y(c,m)$ is small respect to $U_0\cap X(c,m).$ 

Let $\widetilde U_0^{c,m}\xrightarrow{j_{c,m}}U_0^{c,m}$ denote the open embedding. And $L_{\chi}|_{U_0\cap X(c,m)}$ is lisse sheaf and pointwise pure. With the smoothness of $U_0\cap X(c,m),$ $L_{\chi}|_{U_0\cap X(c,m)}$ is pure and thus semisimple in $\cD^b(\widetilde{U}_0^{c,m}).$ Morever, we know that $L_{\chi}|_{U_0\cap X(c,m)}$ is semisimple local system in $\cD^b(\widetilde{U}_0^{c,m}).$

${t_{c,\underline{\lambda}}}_!\overline{\bbQ_l}$ is generated by simple perverse sheaf ${j_{c,m}}_{!*}(L'[\dim (U_0\cap X(c,m)]),$ where $L'$ is a simple local system summand of $L_{\chi}|_{U_0\cap X(c,m)},$ for some $\chi\in X(S_m),$ in $\cD^b(U_0^{c,m}).$ 

Now set
\[
K=\Ind\left(
 IC(O_{x_P},\overline{\bbQ_l})\boxtimes\prod IC(X(0,1),\overline{\bbQ_l})
\boxtimes\prod IC(O_{nh,\mathrm{semi}},\overline{\bbQ_l})\boxtimes IC(O_{x_I},\overline{\bbQ_l})
\right).
\]

The purity of $\widetilde j_0^{c,m*}K$ implies that $j_0^{c,m*}K$ is pure, as we now show.

$j_0^{c,m*}K$ is generated by elements ${t_{c,\underline{\lambda}}}_!\overline{\bbQ_l}.$ By the smallness of the morphism $\phi,$ every simple perverse generator of $j_0^{c,m*}K$
has the form $j_{c,m!*}P$, where $P$ is a simple perverse sheaf on $\widetilde U_0^{c,m}$. Moreover, $P$ is a local system up to a shift.

Now consider ${}^p\mathbf{H}^i(j_0^{c,m*}K)$ and its weight filtration:
\[
0\subset P_1\subset P_2\subset\cdots\subset{}^p\mathbf{H}^i(j_0^{c,m*}K).
\]
The functor $j_{c,m}^*$ preserves this weight filtration.
Moreover, ${}^p\mathbf{H}^i(\widetilde j_0^{c,m*}K)\simeq \mathbf{H}^{i-\dim U_0^{c,m}}(\widetilde j_0^{c,m*}K)[\dim  U_0^{c,m}],$ since $\mathbf{H}^{k}(\widetilde j_0^{c,m*}K)$ is a lisse sheaf complex for any $k.$

This complex is pure of weight $i.$

Moreover, $j_{c,m}^*(P_i/P_j)=0$ implies $P_i/P_j=0$, since every generator
has the form $j_{c,m!*}P$ and the faithfulness in \cite[Proposition 5.3.9]{Pramod-2021}.

It follows that ${}^p\mathbf{H}^i(j_0^{c,m*}K)$ is pure of the
same weight as ${}^p\mathbf{H}^i(\widetilde j_0^{c,m*}K)$. Thus $j_0^{c,m*}K$
is also pure and hence semisimple in $\cD^b(U_0^{c,m})$. Consequently, $j_0^{c,m*}\mathcal{L}_1$ is semisimple in $\cD^b(U_0^{c,m})$.

It follows that $j_0^{c_2,m_2*}\mathcal{L}$ is semisimple and by Lemma \ref{simp}, $\mathcal{L}_2$ can be written as a sum of summands(up to shift) of
\[
u_0^*\Ind\left(
 IC(O_{x_P},\overline{\bbQ_l})\boxtimes\prod IC(X(0,1),\overline{\bbQ_l})
\boxtimes\prod IC(O_{c^{2}_{nh,j}},\overline{\bbQ_l})\boxtimes IC(O_{x_I},\overline{\bbQ_l})
\right),
\]
By descending induction on the support of $\mathcal{L}_i$, we conclude that $\widetilde j_0^{c,m*}\operatorname{For}^{\bfG_{\nu}}\mathcal{L}$
is pointwise pure. Thus $\mathcal{L}$ is pointwise pure on $X(\mathbf{c},\mathbf{m}).$
\end{proof}
\subsection*{Acknowledgements} 
Y. Wu and J. Xiao are partially supported by National Natural Science Foundation of China [Grant No. 12471030]. An initial version of the proof of Lemma~\ref{lem:grassmannian-pushforward} was developed with AI assistance, and its correctness was subsequently verified by the authors. All other mathematical content was developed independently of AI. AI tools were also used to correct grammar and spelling.
\bibliography{ref}
\end{document}